\newcommand{\dx}{\mathrm{d}}
\newcommand{\apply}[3][]{\left<#2,#3\right>\ifthenelse{\equal{#1}{}}{}{_{#1}}}
\newcommand{\scalar}[3][]{\left(#2\mid#3\right)\ifthenelse{\equal{#1}{}}{}{_{#1}}}
\renewcommand{\phi}{\varphi}
\newcommand{\e}{\mathrm{e}}
\newcommand{\setone}{\mathds{1}}
\newcommand{\eps}{\varepsilon}
\renewcommand{\rho}{\varrho}
\DeclareMathOperator*{\esssup}{ess\,sup}
\DeclareMathOperator{\hull}{span}
\DeclareMathOperator{\AP}{AP}
\DeclareMathOperator{\AAP}{AAP}
\DeclareMathOperator{\BUC}{BUC}
\DeclareMathOperator{\Freq}{Freq}
\theoremstyle{definition}
\newtheorem{theorem}{Theorem}[section]
\newtheorem{proposition}[theorem]{Proposition}
\newtheorem{corollary}[theorem]{Corollary}
\newtheorem{lemma}[theorem]{Lemma}
\newtheorem{remark}[theorem]{Remark}
\newtheorem{example}[theorem]{Example}
\newtheorem{definition}[theorem]{Definition}
\title{Inhomogeneous Parabolic Neumann Problems}
\date{\today}
\author{Robin Nittka}
\address{Robin Nittka\\University of Ulm\\Institute of Applied Analysis\\89069 Ulm\\Germany}
\email{robin.nittka@uni-ulm.de}
\keywords{Parabolic initial-boundary value problem, inhomogeneous Robin boundary conditions, existence of weak solutions,
	continuity up to the boundary, asymptotic behavior, asymptotically almost periodic solutions}
\subjclass[2010]{Primary: 35K20; Secondary: 35B15, 35B65}
\numberwithin{equation}{section}
\begin{document}
\begin{abstract}
	We study second order parabolic equations on Lipschitz domains subject to inhomogeneous Neumann (or, more generally, Robin) boundary
	conditions. We prove existence and uniqueness of weak solutions and their continuity up to the boundary of the parabolic
	cylinder. Under natural assumptions on the coefficients and the inhomogeneity we can also prove
	convergence to an equilibrium or asymptotic almost periodicity.
\end{abstract}

\maketitle

\section{Introduction}

Let $\Omega$ be a bounded Lipschitz domain in $\mathds{R}^N$.
Our model problem is the
the heat equation
\[
	\left\{ \begin{aligned}
		u_t(t,x) - \Delta u(t,x) & = f(t,x), && t > 0, \, x \in \Omega \\
		\frac{\partial u(t,z)}{\partial \nu} & = g(t,z), && t > 0, \, z \in \partial\Omega \\
		u(0,x) & = u_0(x), && x \in \Omega
	\end{aligned} \right.
\]
subject to inhomogeneous Neumann boundary conditions.
The above problem has a unique weak solution in an $L^2$-sense if $f$, $g$ and $u_0$ are square-integrable.
We are interested in its regularity at the boundary and its asymptotic behavior.
Such problems appear in a natural way
in control theory~\cite{BDKL01,CGS93} or thermal imaging~\cite{CKY99}.

More precisely, we show the following: if $u_0$ is continuous
and $f$ and $g$ satisfy some integrability conditions,
then the solution $u$ is continuous up to the boundary of the parabolic cylinder;
if $f$ and $g$ converge to zero
in a time-averaged sense, then $u$ converges to zero uniformly on $\overline{\Omega}$;
finally, if $f$ and $g$ are almost periodic functions, then $u$
is asymptotically almost periodic with essentially the same frequencies.

Even though the heat equation will be our model case,
we will admit general strongly elliptic operators subject to Robin boundary conditions in all of our results.
For homogeneous boundary conditions, i.e., if $g=0$, these problem are well understood and can
be studied by semigroup methods. Inhomogeneous boundary conditions, however, are more delicate.
For smooth data, some existence and regularity results can be found in~\cite[Theorem~5.18]{Lieberman}
or~\cite{DHP07}. Existence of weak solution is shown in~\cite[\S 4.15.3]{LM72}.
Regularity theory in $L^p$-spaces for the inhomogeneous elliptic Neumann problem can be found for example in~\cite{Kenig1,Kenig2}
and for the parabolic Neumann problem in~\cite{Pruess}, both with a different emphasis.
Asymptotic almost periodicity has been studied in~\cite{Are00} for the inhomogeneous Dirichlet problem.

In order to study the asymptotic behavior we want to follow a semigroup approach
by considering the equation as an abstract Cauchy problem in a suitable space, which is adapted to the
boundary data. To this end one could use spaces of distributions that
contain functionals arising from boundary integrals, a
strategy which has been pursued with negative exponent Sobolev spaces~\cite{HDR09} and Sobolev-Morrey spaces~\cite{Gri07}.
This approach, however, has the disadvantage that a priori the solutions no more regular than generic elements of these spaces,
whereas it would be favorable to have continuous functions as solutions. The parabolic structure of the equation
does not immediately help because a gain in regularity is not obvious in presence of the inhomogeneities.
The regularity matters in particular in the limits $t \to 0$ and $t \to \infty$ since
semigroup methods provide us typically with convergence in the norm of the underlying space.

In view of these considerations we aim towards results in the space $\mathrm{C}(\overline{\Omega})$.
Existence is however much more convenient in $L^2(\Omega)$, which is why we will start out by
considering $L^2$-solution. By using $\mathrm{C}(\overline{\Omega})$ we are able to obtain
uniform convergence of $u$ on $\overline{\Omega}$ as $t \to 0$ and as $t \to \infty$, or more generally asymptotic
almost periodicity. This seems to be completely new for Neumann boundary conditions and is our main result.

\smallskip

Our strategy is the following. When formulating the initial-boundary value problem as an abstract Cauchy problem on $L^2(\Omega)$
or $\mathrm{C}(\overline{\Omega})$, we switch to a product space. More precisely,
we regard the inhomogeneous heat equation as an inhomogeneous abstract Cauchy problem for the operator $A$ given
by $A(u,0) = (\Delta u, -\frac{\partial u}{\partial \nu})$ in the space $L^2(\Omega) \times L^2(\partial\Omega)$.
This operator $A$ is not densely defined and hence not the generator of a strongly continuous semigroup.
In fact, it turns out that $A$ does not even satisfy the Hille-Yosida estimates.
Still, the operator is resolvent positive and hence generates a once integrated semigroup.
This implies existence and uniqueness of solutions for regular right hand sides $f$ and $g$
and gives information about the asymptotic behavior of solutions.
These results can be extended to a larger class of less regular right hand sides
once we obtain suitable a priori estimates.

The idea to consider a non-densely defined operator $A$ on a product space in order to treat
inhomogeneous boundary conditions has first been used
by Arendt for the study of the heat equation with inhomogeneous Dirichlet boundary conditions~\cite{Are00}.
Here we copy the skeleton of his proofs.
The details are however quite different, the main aspects being the following:
\begin{enumerate}[(1)]
\item
	We restrict ourselves to Lipschitz domains, which is the usual framework for Neumann problems,
	whereas one of Arendt's main points are the optimal boundary regularity assumptions.
\item
	Our a priori estimate needs more sophisticated methods, whereas
	for the Dirichlet problem it is a consequence of the parabolic maximum principle.
\item
	The Neumann problem has a smoothing effect with respect to the boundary conditions, which
	allows us to obtain continuous solutions even for non-smooth functions $g$, whereas for Dirichlet
	problems the boundary has to be continuous. The latter fact is reflected in various places. It explains
	for example why for the Neumann problem the solution is asymptotically almost periodic in the sense of Bohr even if the
	right hand side is almost periodic only in the sense of Stepanoff, whereas for the Dirichlet problem
	this cannot hold.
\end{enumerate}

\smallskip

The article is organized as follows. In Section~\ref{sec:sol} we introduce the initial-boundary value problem.
We show existence and uniqueness of solutions and discuss the relationship between three different notions of solutions.
Section~\ref{sec:reg} contains results and pointwise estimates for the solutions as well as their
continuity. The most technical part of this section is however postponed to Appendix~\ref{app:pointest}
in the hope that this improves the readability of the article as a whole.
In Section~\ref{sec:conv} we study the convergence of solutions.
More precisely, we give natural sufficient conditions for the solution to be bounded or to converge to a constant
function.
Finally, in Section~\ref{sec:asymp} we show that for asymptotically almost period right hand sides in the sense
of Stepanoff, the solution is asymptotically almost periodic in the sense of Bohr.

\section{Solutions}\label{sec:sol}

Let $\Omega \subset \mathds{R}^N$ be a bounded Lipschitz domain, $N \ge 2$.
For convenience we assume throughout that $\Omega$ is connected; otherwise
we could consider each connected component separately.
Let $a_{ij} \in L^\infty(\Omega)$, $b_j, c_i \in L^q(\Omega)$,
$d \in L^{\frac{q}{2}}(\Omega)$ and $\beta \in L^{q-1}(\partial\Omega)$ be given, where $q > N$ is arbitrary,
and assume that there exists $\mu > 0$ such that
\begin{equation}\label{eq:elliptic}
	\sum_{i,j=1}^N a_{ij} \xi_i \xi_j \ge \mu |\xi|^2 \quad \text{for all } \xi \in \mathds{R}^N.
\end{equation}
Throughout the article we will always refer to the inhomogeneous Robin problem
\begin{equation}
	(P_{u_0,f,g}) \quad \left\{ \begin{aligned}
		u_t(t,x) - Au(t,x) & = f(t,x), && t > 0, \; x \in \Omega \\
		\frac{\partial u(t,z)}{\partial \nu_A} + \beta u(t,z) & = g(t,z), && t > 0, \; z \in \partial\Omega \\
		u(0,x) & = u_0(x), && x \in \Omega,
	\end{aligned} \right.
\end{equation}
with given $u_0 \in L^2(\Omega)$, $f \in L^2(0,T;L^2(\Omega))$ and $g \in L^2(0,T;L^2(\partial\Omega))$.
Here, at least on a formal level,
\begin{align*}
	Au & \coloneqq \sum_{j=1}^N D_j \Bigl( \sum_{i=1}^N a_{ij} D_iu + b_j u \bigr) - \Bigl( \sum_{i=1}^N c_i \, D_iu + du \Bigr) \\
	\frac{\partial u}{\partial \nu_A} & \coloneqq \sum_{j=1}^N \Bigl( \sum_{i=1}^N a_{ij} D_iu + b_ju \Bigr) \nu_j,
\end{align*}
where $\nu = (\nu_j)_{j=1}^N$ denotes the outer unit normal of $\Omega$ at the boundary $\partial\Omega$.
It is convenient to introduce also the bilinear forms
\begin{align}
	a_0(u,v) & \coloneqq \int_\Omega \sum_{j=1}^N \Bigl( \sum_{i=1}^N a_{ij} D_iu + b_j u \Bigr) D_j v + \int_\Omega \Bigl( \sum_{i=1}^N c_i \, D_iu + du \Bigr) v
	\label{eq:defa0}
\shortintertext{and}
	a_\beta(u,v) & \coloneqq a_0(u,v) + \int_{\partial\Omega} \beta u v
	\label{eq:defab}
\end{align}
for $u$ and $v$ in $H^1(\Omega)$, where $H^1(\Omega)$ refers to the Sobolev space of all functions in $L^2(\Omega)$
whose first derivative also lie in $L^2(\Omega)$.

We introduce and compare various notions for a solution of $(P_{u_0,f,g})$,
which are based on the observation that on a formal level the divergence theorem gives
\begin{equation}\label{eq:form0}
	a_0(u,v) = \int_{\partial\Omega} \frac{\partial u}{\partial \nu_A} \, v - \int_\Omega Au \; v
\end{equation}
for all $v \in H^1(\Omega)$.
A \emph{weak solution} is now defined by testing against a smooth function and formally integrating by parts.
\begin{definition}\label{def:weaksol}
	We say that a function $u \in \mathrm{C}([0,T];L^2(\Omega)) \cap L^2(0,T;H^1(\Omega))$
	is a \emph{weak solution of $(P_{u_0,f,g})$ on $[0,T]$} for some $T > 0$ if
	\begin{equation}\label{eq:weak}
		\begin{aligned}
			& -\int_0^T \int_\Omega u(s) \; \psi_t(s) + \int_0^T a_\beta(u(s), \psi(s)) \\
				& \qquad\qquad = \int_\Omega u_0 \; \psi(0) + \int_0^T \int_\Omega f(s) \, \psi(s) + \int_0^T \int_{\partial\Omega} g(s) \, \psi(s)
		\end{aligned}
	\end{equation}
	for all $\psi \in H^1(0,T; H^1(\Omega))$ that satisfy $\psi(T) = 0$.

	We say that a function $u\colon [0,\infty) \to L^2(\Omega)$ is a \emph{weak solution of $(P_{u_0,f,g})$ on $[0,\infty)$}
	if for every $T > 0$ its restriction to $[0,T]$ is a weak solution on $[0,T]$.
\end{definition}

In order to give two further definitions of a solution, we first introduce the $L^2$-realization $A_2$ of $A$
with Robin boundary conditions, which is also based on~\eqref{eq:form0}.
\begin{definition}\label{def:weakderiv}\mbox{}
\begin{enumerate}[(a)]
\item
	Let $u \in H^1(\Omega)$. We say that $Au \in L^2(\Omega)$ if there exists a (necessarily unique)
	function $f \in L^2(\Omega)$ satisfying $a_0(u,\eta) = -\int_\Omega f \, \eta$
	for all $\eta \in H^1_0(\Omega)$. In this case we define $Au \coloneqq f$.
\item
	Let $u \in H^1(\Omega)$ satisfy $Au \in L^2(\Omega)$.
	We say that $\frac{\partial u}{\partial \nu_A} \in L^2(\Omega)$ if there exists a (necessarily unique) function $g \in L^2(\partial\Omega)$
	satisfying $a_0(u,\eta) = \int_{\partial\Omega} g \, \eta - \int_\Omega Au \; \eta$ for all $\eta \in H^1(\Omega)$.
	In this case we define $\frac{\partial u}{\partial \nu_A} \coloneqq g$.
\item
	We define the operator $A_2$ on the space $L^2(\Omega) \times L^2(\partial\Omega)$ by
	\begin{align*}
		D(A_2) & \coloneqq \Bigl\{ (u,0) : u \in H^1(\Omega), \; Au \in L^2(\Omega), \; \frac{\partial u}{\partial \nu_A} \in L^2(\partial\Omega) \Bigr\} \\
		A_2(u,0) & \coloneqq \Bigl(Au, \; -\frac{\partial u}{\partial \nu_A} - \beta u|_{\partial\Omega} \Bigr).
	\end{align*}
\end{enumerate}
\end{definition}

\begin{remark}\label{rem:formform}
	It is easily checked that $(u,0) \in D(A_2)$ with $-A_2(u,0) = (f,g)$ if and only if
	\[
		a_\beta(u,v) = \int_\Omega f v + \int_{\partial\Omega} g v
	\]
	for all $v \in H^1(\Omega)$.
\end{remark}

It is an exercise in applying H\"older's
inequality, the Sobolev embedding theorems and Young's inequality
to prove that there exists $\omega \ge 0$ such that
\begin{equation}\label{eq:aelliptic}
	a_\beta(u,u) \ge \frac{\mu}{2} \int_\Omega |\nabla u|^2 - \omega \int_\Omega |u|^2
\end{equation}
for all $u \in H^1(\Omega)$. We leave the verification to the reader.

Next we collect a few facts about $A_2$.
\begin{lemma}\label{lem:DA2}
	The operator $A_2$ is resolvent positive.
	More precisely, the operator $\lambda - A_2\colon D(A_2) \to L^2(\Omega) \times L^2(\partial\Omega)$
	is invertible for all $\lambda > \omega$, where $\omega$ is as in~\eqref{eq:aelliptic},
	and if $A_2(u,0) = (f,g)$ with non-negative functions $f \in L^2(\Omega)$
	and $g \in L^2(\partial\Omega)$, then $u \ge 0$ almost everywhere.
	Moreover, if $D(A_2)$ is equipped with the graph norm, then $D(A_2)$ is continuously embedded into $H^1(\Omega) \times \{ 0 \}$.
\end{lemma}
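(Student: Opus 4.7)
My strategy is to package all three assertions into an analysis of the shifted bilinear form $b_\lambda(u,v) := \lambda \int_\Omega uv + a_\beta(u,v)$ on $H^1(\Omega)$. First I would check that $b_\lambda$ is continuous on $H^1(\Omega)\times H^1(\Omega)$ (using the integrability hypotheses on the coefficients combined with the Sobolev embedding and the trace inequality $\|v\|_{L^2(\partial\Omega)} \le C\|v\|_{H^1(\Omega)}$) and that it is coercive for $\lambda > \omega$ thanks to~\eqref{eq:aelliptic}. By Remark~\ref{rem:formform}, $(u,0) \in D(A_2)$ solves $(\lambda - A_2)(u,0) = (f,g)$ precisely when
\[
    b_\lambda(u,v) = \int_\Omega fv + \int_{\partial\Omega} gv \qquad \text{for all } v \in H^1(\Omega);
\]
since the right-hand side is a continuous linear functional on $H^1(\Omega)$, Lax--Milgram delivers a unique $u$, yielding invertibility of $\lambda - A_2$ for every $\lambda > \omega$.

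For positivity of $(\lambda - A_2)^{-1}$, I would fix $f, g \ge 0$ and the corresponding $u$, then plug $v = u^- \in H^1(\Omega)$ into the variational equation above. The crucial algebraic observation is that $u^+ u^- = 0$ a.e., with analogous pointwise vanishing of $u^+ D_j u^-$, $(D_i u^+)\,u^-$ and $(D_i u^+)(D_j u^-)$ (both in $\Omega$ and, via the trace, on $\partial\Omega$); this makes every summand of $b_\lambda(u^+, u^-)$ equal to zero, so $b_\lambda(u,u^-) = -b_\lambda(u^-,u^-)$. Consequently
\[
    -b_\lambda(u^-,u^-) = \int_\Omega f u^- + \int_{\partial\Omega} g u^- \ge 0,
\]
while coercivity forces $b_\lambda(u^-, u^-) \ge (\lambda - \omega)\|u^-\|_{L^2(\Omega)}^2 + \tfrac{\mu}{2}\|\nabla u^-\|_{L^2(\Omega)}^2 \ge 0$. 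The two inequalities together compel $u^- = 0$, so $u \ge 0$.

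For the continuous embedding I would pick an arbitrary $(u,0) \in D(A_2)$, set $(f,g) := -A_2(u,0)$, and insert $v = u$ into Remark~\ref{rem:formform} to get $a_\beta(u,u) = \int_\Omega fu + \int_{\partial\Omega} gu$. Bounding the right-hand side via Cauchy--Schwarz combined with the trace inequality, using Young's inequality to absorb a small fraction of $\|u\|_{H^1(\Omega)}^2$, and estimating the left-hand side by~\eqref{eq:aelliptic}, a short rearrangement yields $\|u\|_{H^1(\Omega)}^2 \le C\bigl(\|u\|_{L^2(\Omega)}^2 + \|f\|_{L^2(\Omega)}^2 + \|g\|_{L^2(\partial\Omega)}^2\bigr)$, which is exactly the desired continuity with respect to the graph norm of $D(A_2)$.

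The only genuinely delicate step is the positivity argument, because the non-symmetric drift terms $b_j, c_i$ and the potentials $d, \beta$ could conceivably obstruct a truncation argument. However, the single identity $b_\lambda(u^+, u^-) = 0$ handles them all at once: each summand of $b_\lambda$ is a pointwise product of factors whose supports are disjoint, so no lower-order coefficient needs individual attention.
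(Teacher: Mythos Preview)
Your proof is correct and follows essentially the same route as the paper: Lax--Milgram for invertibility via the coercivity~\eqref{eq:aelliptic}, and a truncation argument for positivity (you test against $u^-$ while the paper tests against $u^+$ for the sign-reversed data, which is the same computation). The only genuine difference is the continuous embedding: you extract it by inserting $v=u$ into the variational identity and absorbing the boundary term with Young's inequality, whereas the paper first observes that $A_2$ is closed (hence $D(A_2)$ is complete for the graph norm) and then invokes the closed graph theorem, using that both $D(A_2)$ and $H^1(\Omega)\times\{0\}$ embed continuously into $L^2(\Omega)\times L^2(\partial\Omega)$. Your argument is more explicit and yields a constant depending transparently on $\mu$, $\omega$, and the trace constant; the paper's is shorter but non-constructive.
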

\begin{proof}
	Let $\omega$ be as in~\eqref{eq:aelliptic} and fix $\lambda > \omega$. Then
	\begin{equation}\label{eq:formelliptic}
		\lambda \int_\Omega |u|^2 + a_\beta(u,u)
			\ge \alpha \, \|u\|_{H^1(\Omega)}^2
	\end{equation}
	for all $u \in H^1(\Omega)$ with $\alpha \coloneqq \min\bigl\{ \lambda - \omega, \frac{\mu}{2} \bigr\} > 0$.
	Hence by the Lax-Milgram theorem~\cite[\S 5.8]{GT01} for every $f \in L^2(\Omega)$ and $g \in L^2(\partial\Omega)$
	there exists a unique function $u \in H^1(\Omega)$ such that
	\begin{equation}\label{eq:laxmilgram}
		\lambda \int_\Omega u v + a_\beta(u,v) = \int_\Omega f v + \int_{\partial\Omega} g v
	\end{equation}
	for all $v \in H^1(\Omega)$.
	By Remark~\ref{rem:formform} this means precisely that there is a unique function $u \in H^1(\Omega)$
	with $(u,0) \in D(A_2)$ and
	\[
		(\lambda - A_2)(u,0) = (\lambda u, 0) - A_2(u,0) = (f,g).
	\]

	We have seen that $\lambda - A_2\colon D(A_2) \to L^2(\Omega) \times L^2(\partial\Omega)$ is a bijection for $\lambda > \omega$.
	Assume now that $f \le 0$ and $g \le 0$. Let $(u,0) \coloneqq (\lambda - A_2)^{-1}(f,g)$ and set
	$v \coloneqq u^+ = u \; \setone_{\lbrace u > 0 \rbrace}$. Then
	\[
		D_jv = D_ju \; \setone_{\lbrace u > 0 \rbrace}
		\qquad\text{and}\qquad
		v|_{\partial\Omega} = u|_{\partial\Omega} \; \setone_{\lbrace u|_{\partial\Omega} > 0 \rbrace}
	\]
	and hence
	\[
		0 \ge \int_\Omega fv + \int_{\partial\Omega} g v 
			= \lambda \int_\Omega u v + a_\beta(u,v) \\
			= \lambda \int_\Omega |v|^2 + a_\beta(v,v)
			\ge 0
	\]
	by~\eqref{eq:laxmilgram}. By~\eqref{eq:formelliptic} this shows that $v = 0$, i.e., $u \le 0$ almost everywhere.
	We have shown that the resolvent $(\lambda - A_2)^{-1}$ is a positive operator.
	Since every positive operator is continuous~\cite{AN09} we deduce that $\lambda - A_2$ is in fact invertible.

	In particular we have proved that $A_2$ is closed. Hence $D(A_2)$ is a Banach space for the graph norm of $A_2$,
	and by definition of $A_2$ we have $D(A_2) \subset H^1(\Omega) \times \{0\}$.
	Since both of these spaces are continuously embedded into $L^2(\Omega) \times L^2(\partial\Omega)$,
	we deduce from the closed graph theorem that $D(A_2)$ is continuously embedded into $H^1(\Omega) \times \{0\}$.
\end{proof}

We always equip $D(A_2)$ with the graph norm.

Now we can define mild and classical solutions of $(P_{u_0,f,g})$.
The definition of a \emph{classical solution} is obtained
by writing $(P_{u_0,f,g})$ in terms of $A_2$ in a straight-forward way, assuming smoothness
in the time variable. The definition of a \emph{mild solution} is similar, but uses an
integrated form of the equation. These two notions are the most common ones in the study of abstract Cauchy problems.
\begin{definition}\label{def:semisol}
	Let $I = [0,T]$ for some $T > 0$, or let $I = [0,\infty)$.
	\begin{enumerate}[(a)]
	\item
		We say that a function $u$ is a \emph{classical $L^2$-solution of $(P_{u_0,f,g})$ on $I$} if $u$ is in
		$\mathrm{C}^1(I;L^2(\Omega))$, we have $u(0) = u_0$, the mapping $t \mapsto (u(t),0)$ is in $\mathrm{C}(I;D(A_2))$
		and the relation
		\begin{equation}\label{eq:classical}
			(u_t(t), 0) - A_2(u(t),0) = (f(t),g(t))
		\end{equation}
		holds for all $t \in I$.
	\item
		We say that a function $u$ is a \emph{mild $L^2$-solution of $(P_{u_0,f,g})$ on $I$} if $u \in \mathrm{C}(I;L^2(\Omega))$,
		$(\int_0^t u(s), 0) \in D(A_2)$ for all $t \ge 0$ and
		\begin{equation}\label{eq:mild}
			(u(t) - u_0, 0) - A_2 \Bigl( \int_0^t u(s), 0 \Bigr) = \Bigl(\int_0^t f(s), \int_0^t g(s) \Bigr)
		\end{equation}
		for all $t \ge 0$.
	\end{enumerate}
\end{definition}

It will turn out later that weak solutions and mild $L^2$-solutions are in fact the same.
Let us start with an easy relationship between the three notions of a solution.
\begin{theorem}\label{thm:weakersol}
	Let either $I = [0,T]$ with $T > 0$ or $I = [0,\infty)$.
	\begin{enumerate}[(a)]
	\item
		Every classical $L^2$-solution of $(P_{u_0,f,g})$ on $I$ is a weak solution on $I$.
	\item
		Every weak solution of $(P_{u_0,f,g})$ on $I$ is a mild $L^2$-solution on $I$.
	\end{enumerate}
\end{theorem}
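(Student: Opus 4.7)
For part (a), suppose $u$ is a classical $L^2$-solution. The plan is to translate the pointwise relation $(u_t(t),0)-A_2(u(t),0)=(f(t),g(t))$ into its bilinear form equivalent via Remark~\ref{rem:formform}: for every $t\in I$ and every $v\in H^1(\Omega)$,
\[
a_\beta(u(t),v) + \int_\Omega u_t(t)\,v = \int_\Omega f(t)\,v + \int_{\partial\Omega} g(t)\,v.
\]
Then, given a test function $\psi\in H^1(0,T;H^1(\Omega))$ with $\psi(T)=0$, set $v=\psi(t)$ and integrate over $t\in[0,T]$; this is legal because all quantities depend measurably (in fact continuously) on $t$. A standard integration by parts in time replaces $\int_0^T\!\int_\Omega u_t\,\psi$ with $-\int_0^T\!\int_\Omega u\,\psi_t - \int_\Omega u_0\,\psi(0)$, using $\psi(T)=0$ and $u(0)=u_0$, which yields~\eqref{eq:weak}. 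The required regularity of $u$ (namely $u\in C([0,T];L^2(\Omega))\cap L^2(0,T;H^1(\Omega))$) follows from $u\in C^1(I;L^2(\Omega))$ together with the continuous embedding $D(A_2)\hookrightarrow H^1(\Omega)\times\{0\}$ of Lemma~\ref{lem:DA2}; for the $[0,\infty)$ case, work with arbitrary finite intervals.

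For part (b), let $u$ be a weak solution and fix $t\ge 0$. Since $u\in L^2_{\mathrm{loc}}([0,\infty);H^1(\Omega))$, the Bochner integral $U(t):=\int_0^t u(s)\,\mathrm{d}s$ lies in $H^1(\Omega)$. The goal, in view of Remark~\ref{rem:formform}, is to verify
\[
a_\beta(U(t),v) = \int_\Omega \Bigl(u_0-u(t)+\int_0^t f(s)\,\mathrm{d}s\Bigr) v + \int_{\partial\Omega}\Bigl(\int_0^t g(s)\,\mathrm{d}s\Bigr) v
\]
for every $v\in H^1(\Omega)$, as this is exactly~\eqref{eq:mild} rewritten via $A_2$.

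The main step, and the principal obstacle, is the choice of test function in~\eqref{eq:weak}: one would like $\psi(s,x)=\chi_{[0,t]}(s)v(x)$, but this is not in $H^1(0,T;H^1(\Omega))$. The remedy is to approximate: pick $T>t$ and define $\phi_\varepsilon\in H^1(0,T)$ by $\phi_\varepsilon\equiv 1$ on $[0,t]$, linearly decreasing to $0$ on $[t,t+\varepsilon]$, and $0$ afterwards, so that $\phi_\varepsilon(T)=0$. Using $\psi(s,x)=\phi_\varepsilon(s)v(x)$ in~\eqref{eq:weak} gives
\[
\frac{1}{\varepsilon}\int_t^{t+\varepsilon}\!\!\int_\Omega u(s)v\,\mathrm{d}s + \int_0^T\phi_\varepsilon(s)\,a_\beta(u(s),v)\,\mathrm{d}s = \int_\Omega u_0 v + \int_0^T\phi_\varepsilon(s)\Bigl(\int_\Omega f(s)v+\int_{\partial\Omega} g(s)v\Bigr)\mathrm{d}s.
\]
Sending $\varepsilon\to 0$, the first term converges to $\int_\Omega u(t)v$ by continuity of $s\mapsto u(s)$ in $L^2(\Omega)$, while the remaining integrals converge by dominated convergence (using $u\in L^2(0,T;H^1(\Omega))$, $f\in L^2$, $g\in L^2$, and continuity of $a_\beta$ in its first argument). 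Finally one exchanges the Bochner integral with $a_\beta(\cdot,v)$ and the inner products against $v$, identifying $\int_0^t a_\beta(u(s),v)\mathrm{d}s$ with $a_\beta(U(t),v)$, which produces the desired identity and completes the argument.
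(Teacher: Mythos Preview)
Your proof is correct and follows essentially the same approach as the paper. The only minor technical difference is in part~(b): you use an explicit cutoff approximation $\phi_\varepsilon$ of $\chi_{[0,t]}$ and pass to the limit directly, whereas the paper tests with arbitrary products $\phi(t)\eta$ with $\phi\in H^1(0,T)$, $\phi(T)=0$, recognizes the resulting identity as saying that $t\mapsto\int_\Omega u(t)\eta$ is weakly differentiable with a specified derivative, and then integrates; both arguments rest on the same use of separated test functions and on Remark~\ref{rem:formform}.
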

\begin{proof}
	All three definitions depend only on the behavior of $u$ on bounded intervals, so
	it suffices to consider the case $I = [0,T]$.
	\begin{enumerate}[(a)]
	\item
		Let $u$ be a classical $L^2$-solution. Then $u \in \mathrm{C}([0,T];H^1(\Omega))$
		by Lemma~\ref{lem:DA2}, which shows that $u$ has the regularity requested in Definition~\ref{def:weaksol}.
		Let $\psi$ be in $H^1(0,T;H^1(\Omega))$ and satisfy $\psi(T) = 0$. From~\eqref{eq:classical}
		and Remark~\ref{rem:formform} we obtain that
		\[
			\int_\Omega u_t(t) \, \psi(t) + a_\beta(u(t),\psi(t)) = \int_\Omega f(t) \, \psi(t) + \int_{\partial\Omega} g(t) \, \psi(t)
		\]
		for all $t \in [0,T]$. Integrating over $[0,T]$ and integrating the first summand by parts
		this gives~\eqref{eq:weak}.
	\item
		Let $u$ be a weak solution. Fix functions $\phi \in H^1(0,T)$ and $\eta \in H^1(\Omega)$, where $\phi(T) = 0$.
		Define $\psi(t) \coloneqq \phi(t) \cdot \eta$. Then $\psi \in H^1(0,T;H^1(\Omega))$
		with $\psi(T) = 0$ and hence
		\begin{align*}
			& -\int_0^T \Bigl( \int_\Omega u(s) \eta \Bigr) \, \phi_t(s) \\
				& \qquad = \Bigl( \int_\Omega u_0 \eta \Bigr) \, \phi(0)
				+ \int_0^T \Bigl( -a_\beta(u(s),\eta) + \int_\Omega f(s) \eta + \int_{\partial\Omega} g(s) \eta \Bigr) \, \phi(s)
		\end{align*}
		by~\eqref{eq:weak}. Hence $t \mapsto \int_\Omega u(t)\eta$ is weakly differentiable for all $\eta \in H^1(\Omega)$ with weak derivative
		\[
			\frac{\dx}{\dx t} \int_\Omega u(t) \eta
				= -a_\beta(u(s),\eta) + \int_\Omega f(s)\eta + \int_{\partial\Omega} g(t) \eta.
		\]
		and initial value $\int_\Omega u(0) \eta = \int_\Omega u_0 \eta$, hence $u(0) = u_0$.
		We deduce that
		\[
			\int_\Omega u(t) \eta
				= \int_\Omega u_0 \eta + \int_0^t \Bigl( -a_\beta(u(s),\eta) + \int_\Omega f(s) \eta + \int_{\partial\Omega} g(s) \eta \Bigr) \\
		\]
		for all $t \in [0,T]$ and all $\eta \in H^1(\Omega)$. Since $u \in L^2(0,T;H^1(\Omega))$ and $v \mapsto a_\beta(v,\eta)$
		is a continuous linear functional on $H^1(\Omega)$, this implies that
		\[
			\int_\Omega (u(t) - u_0) \eta + a_\beta\Bigl( \int_0^t u(s), \eta\Bigr)
				= \int_\Omega \Bigl( \int_0^t f(s) \Bigr) \eta + \int_{\partial\Omega} \Bigl( \int_0^t g(s) \Bigr) \eta
		\]
		for all $\eta \in H^1(\Omega)$.
		Hence by Remark~\ref{rem:formform} the function $u$ is a weak solution.
	\qedhere
	\end{enumerate}
\end{proof}

We want to establish the existence of a weak solution
via the theory of resolvent positive operators.
Since $L^2(\Omega) \times L^2(\partial\Omega)$ is a Banach lattice with
order continuous norm, the resolvent positive operator $A_2$ generates a once integrated
semigroup~\cite[Theorem~3.11.7]{ABHN01}. This yields the following
existence, uniqueness and comparison results for $L^2$-solutions.

\begin{proposition}\label{prop:respossol}
	Let $u_0 \in L^2(\Omega)$, $f \in L^2(0,T; L^2(\Omega))$ and $g \in L^2(0,T; L^2(\partial\Omega))$
	for some $T > 0$.
	\begin{enumerate}[(a)]
	\item\label{ass:uniquemild}
		Problem $(P_{u_0,f,g})$ has at most one mild $L^2$-solution.
	\item\label{ass:exclass}
		Assume that $u_0 \in L^2(\Omega)$ satisfies $Au_0 \in L^2(\Omega)$ and $\frac{\partial u_0}{\partial \nu_A} \in L^2(\partial\Omega)$
		and that $f \in \mathrm{C}^2( [0,T]; L^2(\Omega) )$ and $g \in \mathrm{C}^2( [0,T]; L^2(\partial\Omega) )$.
		If $\frac{\partial u_0}{\partial \nu_A} + \beta u_0 = g(0)$ holds
		and $v \coloneqq Au_0 + f(0) \in L^2(\Omega)$ satisfies $Av \in L^2(\Omega)$ and $\frac{\partial v}{\partial \nu_A} \in L^2(\partial\Omega)$,
		then $(P_{u_0,f,g})$ has a classical $L^2$-solution.
	\item\label{ass:comp}
		Assume that $u_0 \ge 0$, $f(t) \ge 0$ and $g(t) \ge 0$ almost everywhere for almost every $t \in (0,T)$.
		If $u$ is a mild $L^2$-solution of $(P_{u_0,f,g})$, then $u(t) \ge 0$ almost everywhere for every $t \in (0,T)$.
	\end{enumerate}
\end{proposition}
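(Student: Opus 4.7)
\emph{Plan.} The cornerstone, recorded in the paragraph immediately preceding the statement, is that $A_2$ generates a once integrated semigroup on $E := L^2(\Omega) \times L^2(\partial\Omega)$ by~\cite[Theorem~3.11.7]{ABHN01}; all three parts will be deduced from this together with the form characterisation of $A_2$ in Remark~\ref{rem:formform}. For~(a), by linearity it suffices to show that a mild $L^2$-solution $u$ of $(P_{0,0,0})$ vanishes. Then~\eqref{eq:mild} says that $V(t) := \int_0^t u(s)\,\dx s$ satisfies $(V(t),0) \in D(A_2)$ with $A_2(V(t),0) = (u(t),0)$, and the vanishing of the second component of $A_2(V(t),0)$ forces $\frac{\partial V(t)}{\partial \nu_A} + \beta V(t) = 0$. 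Thus $V$ is a classical solution on $L^2(\Omega)$ of $V' = B_2 V$, $V(0) = 0$, where $B_2$ denotes the operator on $L^2(\Omega)$ associated with the form $a_\beta$ (i.e.\ the $L^2$-realisation of $A$ with homogeneous Robin boundary conditions). Since~\eqref{eq:aelliptic} makes $a_\beta$ a coercive sectorial form on $H^1(\Omega)$, $B_2$ generates an analytic $C_0$-semigroup on $L^2(\Omega)$, so $V \equiv 0$ and $u = V' \equiv 0$.

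For~(b), I recast the problem as the abstract inhomogeneous Cauchy problem $U'(t) = A_2 U(t) + F(t)$, $U(0) = (u_0,0)$ on $E$, with $F := (f,g)$. The regularity imposed on $u_0$ is exactly $(u_0,0) \in D(A_2)$; the compatibility $\frac{\partial u_0}{\partial \nu_A} + \beta u_0 = g(0)$ kills the second component of $A_2(u_0,0) + F(0)$, producing $(v,0)$ with $v = Au_0 + f(0)$; and the further regularity of $v$ places $(v,0)$ back in $D(A_2)$. Together with $F \in \mathrm{C}^2([0,T];E)$ these are precisely the hypotheses of the existence theorem for classical solutions of inhomogeneous Cauchy problems associated with a once integrated semigroup generator (cf.~\cite[\S 3.2]{ABHN01}). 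Unwinding Definition~\ref{def:semisol}, the classical solution on $E$ so obtained is a classical $L^2$-solution of $(P_{u_0,f,g})$.

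For~(c), I first treat data satisfying the hypotheses of~(b), so $u$ is classical. Testing the identity of Remark~\ref{rem:formform} at time $t$ with $v := u(t)^-$ and using $u_0, f, g \ge 0$ yields $\tfrac{1}{2}\tfrac{\dx}{\dx t}\|u(t)^-\|_{L^2(\Omega)}^2 + a_\beta(u(t)^-, u(t)^-) \le 0$; combined with~\eqref{eq:aelliptic} and Gronwall, this forces $u^- \equiv 0$. General non-negative data are approximated by non-negative sequences $u_0^n, f^n, g^n$ fulfilling the smoothness and compatibility hypotheses of~(b); the associated classical solutions $u^n$ are non-negative, and by uniqueness~(a) together with the continuous dependence built into the integrated-semigroup framework they converge to the given mild solution $u$ in $\mathrm{C}([0,T];L^2(\Omega))$, so $u \ge 0$.

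The main technical obstacle is the approximation step in~(c): besides mollification one has to enforce the compatibility $\frac{\partial u_0^n}{\partial \nu_A} + \beta u_0^n = g^n(0)$ at $t = 0$, which standard smoothing will destroy. This can be repaired by correcting $u_0^n$ with the solution of a small auxiliary elliptic Robin problem whose right-hand side is the compatibility defect $g^n(0) - \frac{\partial u_0^n}{\partial \nu_A} - \beta u_0^n$; controlling this correction in $L^2$ while keeping it non-negative is where the genuinely non-routine work lies.
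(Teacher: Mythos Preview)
Your treatment of~(b) is exactly the paper's: rewrite the hypotheses as $(u_0,0)\in D(A_2)$ and $A_2(u_0,0)+(f(0),g(0))\in D(A_2)$, then invoke~\cite[Corollary~3.2.11]{ABHN01}. Your argument for~(a) is different but correct: the paper deduces uniqueness \emph{from}~(c) (if $u_0=f=g=0$ then both $u$ and $-u$ are non-negative, so $u=0$), whereas you bypass positivity entirely by observing that $V(t)=\int_0^t u$ is a classical solution of $V'=A_{2,h}V$, $V(0)=0$, for the $\mathrm{C}_0$-semigroup generator $A_{2,h}$. That is a clean self-contained alternative.

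The substantive divergence is in~(c). The paper does not approximate at all: it simply cites~\cite[Theorem~3.11.11]{ABHN01}, which asserts directly that mild solutions of the abstract Cauchy problem for a resolvent positive operator on a Banach lattice with order continuous norm are positive whenever the initial value and the inhomogeneity are. Since $L^2(\Omega)\times L^2(\partial\Omega)$ has order continuous norm and $A_2$ is resolvent positive (Lemma~\ref{lem:DA2}), the conclusion is immediate. Your route via classical solutions and approximation works in principle, but the obstacle you flag---enforcing the compatibility condition while preserving non-negativity---is more easily overcome than you suggest: take $f_n,g_n$ with $f_n(0)=g_n(0)=0$ (as in the proof of Theorem~\ref{thm:mainex}) and $u_{0,n}\coloneqq n^2 R(n,A_{2,h})^2 u_0$, which lies in $D(A_{2,h}^2)$, is non-negative by resolvent positivity, and converges to $u_0$. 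More seriously, your appeal to ``continuous dependence built into the integrated-semigroup framework'' is not justified: once integrated semigroups without Hille--Yosida estimates (and Example~\ref{ex:hille} shows $A_2$ fails them) do not come with such continuous dependence for free. You would need the energy estimate of Lemma~\ref{lem:H1est} to pass to the limit, which is logically available but not part of the integrated-semigroup machinery you invoke.
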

\begin{proof}
	By Definition~\ref{def:semisol} a function $u$ is a mild (resp.: classical) $L^2$-solution of $(P_{u_0,f,g})$
	if and only if the mapping $t \mapsto (u(t),0)$ is a mild (resp.: classical) solution of
	the abstract Cauchy problem associated with $A_2$ with inhomogeneity $(f,g)$, confer~\cite[\S 3.1]{ABHN01}.
	Hence part~\eqref{ass:comp} follows from~\cite[Theorem~3.11.11]{ABHN01}. This implies in particular that
	$u = 0$ is the unique mild $L^2$-solution if $u_0 = 0$, $f=0$ and $g = 0$,
	so part~\eqref{ass:uniquemild} follows from the linearity of the equation.
	Finally, the conditions on $u_0$ in part~\eqref{ass:exclass} can be rephrased by saying that
	\[
		(u_0, 0) \in D(A_2)
		\quad\text{and}\quad
		A_2(u_0,0) + (f(0),g(0)) \in D(A_2).
	\]
	Hence the existence of a classical $L^2$-solutions follows from~\cite[Corollary~3.2.11]{ABHN01}.
\end{proof}

We want to show that for all square-integrable functions $u_0$, $f$ and $g$ we have a unique weak solution.
As a first step we prove a bound for classical $L^2$-solutions in the norm of
$\mathrm{C}( [0,T]; L^2(\Omega) ) \cap L^2(0,T; H^1(\Omega) )$.
\begin{lemma}\label{lem:H1est}
	If $u$ is a classical $L^2$-solution of $(P_{u_0,f,g})$ on $[0,T]$ for some $T > 0$, then
	\begin{equation}\label{eq:H1est}
		\sup_{0 \le t \le T} \int_\Omega |u(t)|^2 + \int_0^T \int_\Omega |\nabla u|^2 \le c \int_\Omega |u_0|^2 + c \int_0^T \int_\Omega |f(t)|^2 + c \int_0^T \int_{\partial\Omega} |g(t)|^2
	\end{equation}
	for a constant $c \ge 0$ that depends only on $T$, $\Omega$ and the values $\mu$ and $\omega$ in~\eqref{eq:aelliptic}.
\end{lemma}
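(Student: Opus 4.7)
The plan is to derive~\eqref{eq:H1est} via the standard energy method applied to a classical solution, whose regularity is already sufficient to justify every step without further approximation.

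First, I would test the equation against $u(t)$ itself. Since $(u(t),0) \in D(A_2)$ with $A_2(u(t),0) = (Au(t), -\frac{\partial u(t)}{\partial \nu_A} - \beta u(t)|_{\partial\Omega})$, Remark~\ref{rem:formform} combined with the classical equation~\eqref{eq:classical} gives
\[
    \int_\Omega u_t(t)\,v + a_\beta(u(t),v) = \int_\Omega f(t)\,v + \int_{\partial\Omega} g(t)\,v
\]
for every $v \in H^1(\Omega)$. Choosing $v = u(t)$ and using that $t\mapsto \tfrac{1}{2}\|u(t)\|_{L^2(\Omega)}^2$ is in $\mathrm{C}^1([0,T])$ with derivative $\int_\Omega u_t(t) u(t)$, I obtain the energy identity
\[
    \tfrac{1}{2}\tfrac{\dx}{\dx t}\|u(t)\|_{L^2(\Omega)}^2 + a_\beta(u(t),u(t)) = \int_\Omega f(t)\,u(t) + \int_{\partial\Omega} g(t)\,u(t).
\]

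Next I would invoke the coercivity estimate~\eqref{eq:aelliptic} on the left and control the right hand side. The interior term is handled by Young's inequality, while the boundary term is the only subtle point: I need to absorb it into the gradient on the left. The key tool is the trace inequality of Ehrling type, namely, for every $\delta > 0$ there is $C_\delta$ such that $\|v\|_{L^2(\partial\Omega)}^2 \le \delta\|\nabla v\|_{L^2(\Omega)}^2 + C_\delta\|v\|_{L^2(\Omega)}^2$ for all $v \in H^1(\Omega)$; this follows from the continuity of the trace operator $H^1(\Omega) \to L^2(\partial\Omega)$ on a Lipschitz domain together with the compactness of that trace into $L^2(\partial\Omega)$ via a standard contradiction argument. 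Combining Young with this inequality and choosing $\delta$ small enough to absorb a fraction of $\tfrac{\mu}{2}\|\nabla u\|_{L^2}^2$, I arrive at a differential inequality of the form
\[
    \tfrac{\dx}{\dx t}\|u(t)\|_{L^2(\Omega)}^2 + \tfrac{\mu}{2}\|\nabla u(t)\|_{L^2(\Omega)}^2
        \le C\|u(t)\|_{L^2(\Omega)}^2 + C\|f(t)\|_{L^2(\Omega)}^2 + C\|g(t)\|_{L^2(\partial\Omega)}^2,
\]
with $C$ depending only on $\mu$, $\omega$, $T$ and $\Omega$.

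Finally, Gronwall's inequality applied to this differential inequality yields the $\mathrm{C}([0,T];L^2(\Omega))$-bound, and then integrating the inequality over $[0,T]$ and inserting the $L^2$-bound on the right gives the $L^2(0,T;H^1(\Omega))$-bound for $\nabla u$. Adding the two estimates produces~\eqref{eq:H1est}. The only real obstacle is the treatment of the boundary term, and it is resolved cleanly by the Ehrling-type trace inequality; all other steps are routine applications of Young's inequality and Gronwall's lemma.
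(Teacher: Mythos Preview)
Your proof is correct and follows essentially the same energy-method route as the paper: test against $u(t)$, invoke the coercivity~\eqref{eq:aelliptic}, absorb the boundary term into the gradient, and close via a time-integration argument. The only cosmetic differences are that the paper handles the boundary term using the plain trace bound $\|u\|_{L^2(\partial\Omega)} \le c_1 \|u\|_{H^1(\Omega)}$ together with Young's inequality (rather than your Ehrling-type trace estimate), and it closes the estimate by iterating on short time intervals of length $t_0 = \tfrac{1}{2c_2}$ instead of invoking Gronwall's lemma.
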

\begin{proof}
	Let $t \in [0,T]$ be arbitrary. Then
	\begin{align*}
		& \frac{1}{2} \int_\Omega |u(t)|^2 - \frac{1}{2} \int_\Omega |u_0|^2
			= \frac{1}{2} \int_0^t \frac{\dx}{\dx s} \int_\Omega |u(s)|^2
			= \int_0^t \int_\Omega u(s) \; u_t(s) \\
			& \qquad = \int_0^t \int_\Omega u(s) \bigl( Au(s) + f(s) \bigr) \\
			& \qquad = \int_0^t \int_{\partial\Omega} \frac{\partial u(s)}{\partial \nu_A} \, u(s) - \int_0^t a_0(u(s), u(s)) + \int_0^t \int_\Omega f(s) \, u(s) \\
			& \qquad = \int_0^t \int_\Omega f(s) \, u(s) + \int_0^t \int_{\partial\Omega} g(s) \, u(s) - \int_0^t a_\beta(u(s), u(s)) \\
			& \qquad \le \frac{1}{2} \int_0^t \int_\Omega |f(s)|^2 + \frac{1}{4\eps} \int_0^t \int_{\partial\Omega} |g(s)|^2 \\
				& \qquad\qquad - \Bigl( \frac{\mu}{2} - \eps c_1^2) \int_0^t \int_\Omega |\nabla u(s)|^2 + (\omega + \tfrac{1}{2} + \eps c_1^2) \int_0^t \int_\Omega |u(s)|^2,
	\end{align*}
	where we have used Young's inequality and~\eqref{eq:aelliptic}. Here $c_1 \ge 0$ is the norm of the trace
	operator from $H^1(\Omega)$ to $L^2(\partial\Omega)$. We pick $\eps \coloneqq \frac{\mu}{4c_1^2}$ and vary over $t$
	to deduce that
	\begin{align*}
		& \sup_{0 \le s \le t} \int_\Omega |u(s)|^2 + \int_0^t \int_\Omega |\nabla u(s)|^2 \\
			& \qquad \le c_2 \int_\Omega |u_0|^2 + c_2 \int_0^t \int_\Omega |f(s)|^2 + c_2 \int_0^t \int_{\partial\Omega} |g(s)|^2 + c_2 \int_0^t \int_\Omega |u(s)|^2 \\
			& \qquad \le c_2 \int_\Omega |u_0|^2 + c_2 \int_0^t \int_\Omega |f(s)|^2 + c_2 \int_0^t \int_{\partial\Omega} |g(s)|^2 + t c_2 \sup_{0 \le s \le t} \int_\Omega |u(s)|^2
	\end{align*}
	for all $t \in [0,T]$ with a constant $c_2 \ge 0$ that depends only on $c_1$, $\mu$ and $\omega$.
	This shows that with $t_0 \coloneqq \frac{1}{2c_2}$ we have
	\begin{align*}
		& \sup_{0 \le s \le t} \int_\Omega |u(s)|^2 + \int_0^t \int_\Omega |\nabla u(s)|^2 \\
			& \qquad\qquad \le 2c_2 \int_\Omega |u_0|^2 + 2c_2 \int_0^t \int_\Omega |f(s)|^2 + 2c_2 \int_0^t \int_{\partial\Omega} |g(s)|^2
	\end{align*}
	for all $t \in [0,t_0]$.
	We split $[0,T]$ into finitely many intervals of length at most $s_0$ and apply the last inequality
	successively on these intervals. This gives~\eqref{eq:H1est}.
\end{proof}

We also collect some results about the homogeneous problem $(P_{u_0,0,0})$ for later use.
To this end we introduce the generator $A_{2,h}$ for the homogeneous problem, which is the part of
$A_2$ in $L^2(\Omega) \times \{0\}$. All of the following results stem from semigroup theory.
\begin{proposition}\label{prop:semigroup}
	The operator $A_{2,h}$ given by
	\begin{align*}
		D(A_{2,h}) & = \Bigl\{ u \in H^1(\Omega) : Au \in L^2(\Omega), \; \frac{\partial u}{\partial \nu_A} + \beta u = 0 \Bigr\} \\
		A_{2,h} u & = Au
	\end{align*}
	is the generator of an analytic $\mathrm{C}_0$-semigroup $(T_{2,h}(t))_{t \ge 0}$ on $L^2(\Omega)$.
	Given $u_0 \in L^2(\Omega)$, the function $u$ defined by $u(t) \coloneqq T_{2,h}(t)u_0$
	is the unique mild $L^2$-solution of $(P_{u_0,0,0})$, and we have the following properties:
	\begin{enumerate}[(i)]
	\item\label{it:semigroup_Linfbdd}
		There exist $M \ge 0$ and $\omega \in \mathds{R}$ depending only on $N$, $\Omega$
		and the coefficients of the equation such that $\|u(t)\|_{L^\infty(\Omega)} \le M \e^{\omega t} \|u_0\|_{L^\infty(\Omega)}$
		for all $t \ge 0$.
	\item\label{it:semigroup_hoelder}
		For every $t > 0$ we have $u(t) \in \mathrm{C}(\overline{\Omega})$.
	\item\label{it:semigroup_Csc}
		If $u_0 \in \mathrm{C}(\overline{\Omega})$, then $u \in \mathrm{C}([0,\infty);\mathrm{C}(\overline{\Omega}))$ for all $T > 0$.
	\end{enumerate}
\end{proposition}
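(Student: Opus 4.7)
The plan is to identify $-A_{2,h}$ as the $L^2(\Omega)$-realization of the sesquilinear form $a_\beta$ on $H^1(\Omega)$ and then combine form methods with De Giorgi--Nash--Moser type elliptic regularity. Indeed, by Remark~\ref{rem:formform} (with $g=0$), $-A_{2,h}u = f$ in $L^2(\Omega)$ is equivalent to $a_\beta(u,v) = \int_\Omega f v$ for every $v \in H^1(\Omega)$, so $A_{2,h}$ is (up to sign) the operator associated with $a_\beta$. The form is continuous, densely defined on $H^1(\Omega)$, and by~\eqref{eq:aelliptic} its shift by $\omega$ is $H^1$-coercive, hence closed and sectorial. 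Standard form theory therefore yields that $A_{2,h}$ generates an analytic $C_0$-semigroup $(T_{2,h}(t))_{t\ge 0}$ on $L^2(\Omega)$. For $u_0 \in D(A_{2,h})$ the orbit $u(t) := T_{2,h}(t)u_0$ is a classical $L^2$-solution of $(P_{u_0,0,0})$ and hence a mild $L^2$-solution by Theorem~\ref{thm:weakersol}; by density of $D(A_{2,h})$ in $L^2(\Omega)$, closedness of $A_2$ and the uniqueness statement of Proposition~\ref{prop:respossol}\,\eqref{ass:uniquemild}, this extends to every $u_0 \in L^2(\Omega)$.

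For~\eqref{it:semigroup_Linfbdd} we would apply Stampacchia's truncation method. For a classical $L^2$-solution $u$ with $u_0 \in L^\infty(\Omega)$ and $\omega' > \omega$, test the equation on $[0,t]$ with $\psi(s) := (u(s) - k\e^{\omega' s})^+$ for $k \ge \|u_0\|_{L^\infty(\Omega)}$. Coercivity~\eqref{eq:aelliptic} controls $\|\nabla\psi\|_{L^2}^2$, while H\"older's inequality and the Sobolev embeddings absorb the lower-order and boundary contributions of $b_j, c_i \in L^q(\Omega)$ and $\beta \in L^{q-1}(\partial\Omega)$; the assumption $q > N$ is essential here. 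One concludes that $\psi \equiv 0$, and the symmetric argument applied to $-u$ yields the $L^\infty$-bound first on $D(A_{2,h}) \cap L^\infty(\Omega)$ and then, by density and continuity, on all of $L^\infty(\Omega)$.

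Property~\eqref{it:semigroup_hoelder} then follows from elliptic regularity: by analyticity $u(t) \in D(A_{2,h})$ for every $t > 0$, so $u(t)$ is a bounded $H^1$-weak solution of the elliptic Robin problem $A u(t) = u_t(t) \in L^2(\Omega)$ with $\partial u(t)/\partial \nu_A + \beta u(t)|_{\partial\Omega} = 0$. De Giorgi--Nash--Moser regularity for such problems on Lipschitz domains, valid precisely under the standing hypothesis $q > N$, yields $u(t) \in C^{0,\alpha}(\overline{\Omega}) \subset C(\overline{\Omega})$. For~\eqref{it:semigroup_Csc}, the key observation is that by~\eqref{it:semigroup_Linfbdd} and the same elliptic regularity the resolvent $(\lambda - A_{2,h})^{-1}$ maps $L^\infty(\Omega)$ into $D(A_{2,h}) \cap C(\overline{\Omega})$ for large $\lambda$; positivity together with the $L^\infty$-bound then gives $\lambda(\lambda - A_{2,h})^{-1}v \to v$ uniformly for $v \in C(\overline{\Omega})$, hence $D(A_{2,h}) \cap C(\overline{\Omega})$ is dense in $C(\overline{\Omega})$. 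On this dense subspace, the identity $T_{2,h}(t)v - v = \int_0^t T_{2,h}(s) A_{2,h} v\,\dx s$ combined with~\eqref{it:semigroup_Linfbdd} applied to $A_{2,h}v$ gives norm-continuity at $t = 0$, and a standard $3\varepsilon$-argument extends strong continuity to all of $C(\overline{\Omega})$.

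The main obstacle will be~\eqref{it:semigroup_hoelder}: applying De Giorgi--Nash--Moser theory to a Robin problem on a Lipschitz domain with merely $L^q$ interior and $L^{q-1}$ boundary coefficients requires a careful combination of interior and boundary H\"older estimates, which is presumably the content of the pointwise estimates deferred to Appendix~\ref{app:pointest}.
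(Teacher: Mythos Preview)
Your outline matches the paper's approach, which in fact consists almost entirely of citations: form methods from \cite{DL5} for the analytic semigroup, \cite{Dan00} for the $L^\infty$-bound~\eqref{it:semigroup_Linfbdd}, and the author's own earlier work \cite{Nit11,Diss,Nitt12} for~\eqref{it:semigroup_hoelder} and~\eqref{it:semigroup_Csc}. What you have written is a reasonable reconstruction of the arguments behind those references.

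Two remarks. First, your closing comment misreads the paper: Appendix~\ref{app:pointest} proves the \emph{parabolic} De Giorgi estimate of Proposition~\ref{prop:Linfbound} for the \emph{inhomogeneous} problem $(P_{u_0,f,g})$, not the elliptic H\"older regularity underlying~\eqref{it:semigroup_hoelder}. The latter is imported wholesale from \cite{Nit11,Diss} and not reproved here. Second, in your argument for~\eqref{it:semigroup_Csc} the step ``positivity together with the $L^\infty$-bound then gives $\lambda(\lambda - A_{2,h})^{-1}v \to v$ uniformly for $v \in C(\overline{\Omega})$'' is the genuinely delicate point and does not follow from those two ingredients alone: positivity and an $L^\infty$-bound control the total mass of the resolvent kernel but not its concentration near the diagonal. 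In the cited works this density/convergence step is obtained via uniform H\"older bounds on the resolvent (giving relative compactness in $C(\overline{\Omega})$, so that the $L^2$-limit upgrades to a uniform limit), which is a further consequence of the elliptic regularity you already invoke for~\eqref{it:semigroup_hoelder}.
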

\begin{proof}
	The operator $-A_{2,h}$ is associated with the bounded, $L^2(\Omega)$-elliptic bilinear form $a_\beta\colon H^1(\Omega) \times H^1(\Omega) \to \mathds{R}$
	defined in~\eqref{eq:defab}. Hence $A_{2,h}$ generates an analytic $\mathrm{C}_0$-semigroup on $L^2(\Omega)$, see~\cite[Proposition~XVII.A.6.3]{DL5}.
	By construction a function $u$ is a mild solution for the abstract Cauchy problem associated with $A_{2,h}$ if and only if
	it is a mild $L^2$-solution of $(P_{u_0,0,0})$, which proves the assertion about the mild $L^2$-solutions~\cite[Theorem~3.1.12]{ABHN01}.
	Property~\eqref{it:semigroup_Linfbdd} follows from~\cite[Proposition~7.1]{Dan00}. Properties~\eqref{it:semigroup_hoelder}
	and~\eqref{it:semigroup_Csc} have been proved in~\cite[Theorem~4.3]{Nit11} for bounded coefficients. The same arguments
	work here, but compare also~\cite{Diss,Nitt12}, where unbounded (and nonlinear) coefficients are considered.
\end{proof}

The following is our main existence theorem.
\begin{theorem}\label{thm:mainex}
	Let $u_0 \in L^2(\Omega)$, $f \in L^2(0,T;L^2(\Omega))$ and $g \in L^2(0,T; L^2(\partial\Omega) )$ be given,
	where $T > 0$ is arbitrary.
	Then there exists a weak solution $u$ of $(P_{u_0,f,g})$ on $[0,T]$, which is unique even within the class
	of mild $L^2$-solutions.
\end{theorem}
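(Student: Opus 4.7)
The plan is to handle uniqueness directly from the earlier results and to establish existence by approximating arbitrary data $(u_0, f, g)$ with data satisfying the hypotheses of Proposition~\ref{prop:respossol}(\ref{ass:exclass}), then passing to the limit in the weak formulation using the a priori estimate of Lemma~\ref{lem:H1est}.

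Uniqueness is immediate: by Theorem~\ref{thm:weakersol}(b) every weak solution is a mild $L^2$-solution, and Proposition~\ref{prop:respossol}(\ref{ass:uniquemild}) shows that there is at most one such solution. Hence a weak solution is unique, and in fact uniqueness already holds within the class of mild $L^2$-solutions.

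For existence, suppose one can produce sequences $u_{0,n} \to u_0$ in $L^2(\Omega)$, $f_n \to f$ in $L^2(0,T; L^2(\Omega))$, and $g_n \to g$ in $L^2(0,T; L^2(\partial\Omega))$ such that Proposition~\ref{prop:respossol}(\ref{ass:exclass}) applies to each triple $(u_{0,n}, f_n, g_n)$. This yields classical $L^2$-solutions $u_n$, and Theorem~\ref{thm:weakersol}(a) promotes them to weak solutions. By linearity, $u_n - u_m$ is the classical $L^2$-solution of $(P_{u_{0,n} - u_{0,m}, f_n - f_m, g_n - g_m})$, so Lemma~\ref{lem:H1est} implies that $(u_n)$ is a Cauchy sequence in the Banach space $X \coloneqq \mathrm{C}([0,T]; L^2(\Omega)) \cap L^2(0,T; H^1(\Omega))$. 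Its limit $u \in X$ inherits the weak formulation~\eqref{eq:weak} from the $u_n$ because every term of~\eqref{eq:weak} depends continuously on $u$ in the norm of $X$ and on the data in their respective $L^2$-norms; in particular, $v \mapsto a_\beta(v, \psi(s))$ is a continuous linear functional on $H^1(\Omega)$.

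The main obstacle is arranging the existence of such approximating triples, since the compatibility condition $\frac{\partial u_{0,n}}{\partial \nu_A} + \beta u_{0,n} = g_n(0)$ couples the initial datum to the boundary inhomogeneity. The key simplification is to force $f_n(0) = 0$ and $g_n(0) = 0$ by combining a smooth temporal cutoff $\chi_n$ vanishing near $t = 0$ with a standard time mollification, producing $f_n \in \mathrm{C}^\infty([0,T]; L^2(\Omega))$ and $g_n \in \mathrm{C}^\infty([0,T]; L^2(\partial\Omega))$ that converge to $f$ and $g$ in the respective spaces. With this choice, the compatibility requirement and the regularity demanded of $v_n \coloneqq A u_{0,n} + f_n(0) = A u_{0,n}$ both reduce to the condition $u_{0,n} \in D(A_{2,h}^2)$, with $A_{2,h}$ the homogeneous generator of Proposition~\ref{prop:semigroup}. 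Taking $u_{0,n} \coloneqq T_{2,h}(1/n) u_0$, the analyticity of the semigroup guarantees $u_{0,n} \in D(A_{2,h}^k)$ for every $k$, and strong continuity yields $u_{0,n} \to u_0$ in $L^2(\Omega)$, completing the construction and thus the proof.
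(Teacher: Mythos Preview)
Your proof is correct and follows essentially the same approach as the paper: approximate $(u_0,f,g)$ by smooth data with $f_n(0)=g_n(0)=0$ and $u_{0,n}\in D(A_{2,h}^2)$, invoke Proposition~\ref{prop:respossol}(\ref{ass:exclass}) to obtain classical solutions, and pass to the limit using Lemma~\ref{lem:H1est}. The only cosmetic difference is that the paper cites the density of $D(A_{2,h}^2)$ for generators of $\mathrm{C}_0$-semigroups, whereas you construct $u_{0,n}=T_{2,h}(1/n)u_0$ explicitly via analyticity; both work.
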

\begin{proof}
	Pick sequences $(f_n) \subset \mathrm{C}^2( [0,T]; L^2(\Omega) )$ and
	$(g_n) \subset \mathrm{C}^2( [0,T]; L^2(\partial\Omega) )$ that satisfy $f_n(0) = 0$, $g_n(0) = 0$,
	$f_n \to f$ in $L^2(0,T;L^2(\Omega))$
	and $g_n \to g$ in $L^2(0,T; L^2(\partial\Omega) )$. Since $A_{2,h}$ is the
	generator of a $\mathrm{C}_0$-semigroup,
	there exists a sequence $(u_{n,0}) \subset D(A_{2,h}^2)$
	satisfying $u_{n,0} \to u_0$ in $L^2(\Omega)$, see~\cite[Proposition~II.1.8]{EN00}.
	By Proposition~\ref{prop:respossol} there exists a classical $L^2$-solutions $u_n$ of
	$(P_{u_{n,0},f_n,g_n})$.

	By Lemma~\ref{lem:H1est} the sequence
	$(u_n)$ is Cauchy in $\mathrm{C}( [0,T]; L^2(\Omega) ) \cap L^2(0,T; H^1(\Omega) )$.
	Denote its limit by $u$. Using that $u_n$ is a weak solution of $(P_{u_{n,0},f_n,g_n})$
	by Theorem~\ref{thm:weakersol}, we can pass in~\eqref{eq:weak}
	to the limit and obtain that
	$u$ is a weak solution of $(P_{u_0,f,g})$.
	Uniqueness has already been asserted in Proposition~\ref{prop:respossol}.
\end{proof}

Since being a solution is a local concept, we obtain the following corollary.
\begin{corollary}\label{cor:mainex}
	For given functions $u_0 \in L^2(\Omega)$, $f \in L^2_{\mathrm{loc}}( [0,\infty); L^2(\Omega) )$
	and $g \in L^2_{\mathrm{loc}}( [0,\infty); L^2(\partial\Omega) )$,
	equation $(P_{u_0,f,g})$ has a weak solution on $[0,\infty)$, which is unique
	even in the class of mild solutions.
\end{corollary}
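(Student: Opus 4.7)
The plan is to build the global solution by pasting together the solutions on $[0,T]$ provided by Theorem~\ref{thm:mainex}, using uniqueness to ensure consistency.

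First, for every integer $n \ge 1$, set $f_n \coloneqq f|_{[0,n]} \in L^2(0,n; L^2(\Omega))$ and $g_n \coloneqq g|_{[0,n]} \in L^2(0,n; L^2(\partial\Omega))$. By Theorem~\ref{thm:mainex}, there is a unique weak solution $u_n$ of $(P_{u_0,f_n,g_n})$ on $[0,n]$, which is also unique within mild $L^2$-solutions. I would then observe that for $m < n$, the restriction $u_n|_{[0,m]}$ is a weak solution of $(P_{u_0,f_m,g_m})$ on $[0,m]$ (both sides of~\eqref{eq:weak} depend only on the values of the data on $[0,m]$ once one takes test functions $\psi$ supported in $[0,m]$, with $\psi(m)=0$ playing the role of $\psi(T)=0$). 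By Theorem~\ref{thm:weakersol}(b) this restriction is a mild $L^2$-solution on $[0,m]$, and the uniqueness part of Proposition~\ref{prop:respossol}(a) forces $u_n|_{[0,m]} = u_m$.

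Consequently, the formula $u(t) \coloneqq u_n(t)$ for any $n \ge t$ defines a function $u\colon [0,\infty) \to L^2(\Omega)$ unambiguously, and its restriction to each $[0,T]$ agrees (for $n \ge T$) with $u_n|_{[0,T]}$, which by the same pasting argument coincides with the unique weak solution of $(P_{u_0,f|_{[0,T]},g|_{[0,T]}})$ on $[0,T]$. Hence $u$ is a weak solution on $[0,\infty)$ in the sense of Definition~\ref{def:weaksol}.

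For uniqueness, suppose $v$ is any mild $L^2$-solution of $(P_{u_0,f,g})$ on $[0,\infty)$ in the sense that for every $T > 0$ the restriction $v|_{[0,T]}$ is a mild $L^2$-solution on $[0,T]$ (this is the natural extension of Definition~\ref{def:semisol} to $[0,\infty)$, and it is the notion under which weak solutions are mild by Theorem~\ref{thm:weakersol}(b)). Then for each $n$, the uniqueness clause in Proposition~\ref{prop:respossol}(a) yields $v|_{[0,n]} = u_n = u|_{[0,n]}$, and letting $n \to \infty$ gives $v = u$. There is no real obstacle here beyond carefully observing that the definitions of weak and mild solutions are genuinely local in time, so that restriction and pasting are compatible with the uniqueness statement of Theorem~\ref{thm:mainex}.
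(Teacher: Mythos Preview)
Your argument is correct and is precisely the detailed elaboration of the paper's one-line justification (``Since being a solution is a local concept, we obtain the following corollary''), which the paper states without further proof. The only minor remark is that Definition~\ref{def:semisol} already covers $I=[0,\infty)$ directly, so your parenthetical about extending it is unnecessary; but the key observation that mild (and weak) solutions restrict to subintervals is exactly what is needed and what the paper is implicitly invoking.
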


We deduce the following from Theorem~\ref{thm:weakersol} and
Theorem~\ref{thm:mainex} or Corollary~\ref{cor:mainex}, respectively.
\begin{corollary}
	For problem $(P_{u_0,f,g})$ the notions of weak and mild solutions coincide.
\end{corollary}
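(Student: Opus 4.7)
The plan is to observe that both directions essentially collapse into results already established. One direction is literally Theorem~\ref{thm:weakersol}(b): every weak solution on $[0,T]$ (and hence on $[0,\infty)$, by the locality remark) is a mild $L^2$-solution. So I would only need to argue the converse: every mild $L^2$-solution is a weak solution.

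For the converse, I would argue as follows. Suppose $u$ is a mild $L^2$-solution of $(P_{u_0,f,g})$ on $[0,T]$ (or $[0,\infty)$). By Theorem~\ref{thm:mainex} (respectively Corollary~\ref{cor:mainex}) there exists a weak solution $\tilde{u}$ of $(P_{u_0,f,g})$ with the same data. By Theorem~\ref{thm:weakersol}(b) the function $\tilde{u}$ is itself a mild $L^2$-solution. The uniqueness part of Theorem~\ref{thm:mainex} (equivalently Proposition~\ref{prop:respossol}(a)) asserts that mild $L^2$-solutions are unique, so $u = \tilde{u}$. Hence $u$ is a weak solution.

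There is really no obstacle here; the entire content of the corollary has been organized into the preceding statements. The only care needed is that the uniqueness in Theorem~\ref{thm:mainex} is phrased within the class of mild $L^2$-solutions (not just weak ones), which is precisely what makes the argument go through. Passing from the $[0,T]$ statement to the $[0,\infty)$ statement is immediate since all three notions (weak, mild, classical) are defined locally in time.
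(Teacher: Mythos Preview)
Your proof is correct and matches the paper's own argument exactly: the paper simply states that the corollary is deduced from Theorem~\ref{thm:weakersol} and Theorem~\ref{thm:mainex} (or Corollary~\ref{cor:mainex}), which is precisely the existence-plus-uniqueness reasoning you spelled out.
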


We have seen that Problem $(P_{u_0,f,g})$ admits unique weak solutions. One might expect that
this implies that $A_2$ is the generator of a strongly continuous semigroup. Obviously, this
is not true since $A_2$ is not densely defined. Even worse, the operator does not even
satisfy Hille-Yosida estimates as the following example shows.
\begin{example}\label{ex:hille}
	Set $\Omega = (0,1)$ and consider the Laplace operator with Neumann boundary conditions,
	i.e., $A_2(u,0) \coloneqq (u'', (u'(0), -u'(1)))$ on $L^2(0,1) \times \mathds{R}^2$.
	For $\lambda > 0$ we can explicitly calculate that
	\[
		u_\lambda \coloneqq (\lambda - A)^{-1}(0,(0,1))
	\]
	is given by the formula
	\[
		u_\lambda(x) = \frac{\exp(\sqrt{\lambda}x) + \exp(-\sqrt{\lambda}x)}{\sqrt{\lambda} \bigl(\exp(\sqrt{\lambda})-\exp(-\sqrt{\lambda})\bigr)},
	\]
	from which we obtain after some calculations that
	\[
		\|u_\lambda\|_{L^2(\Omega)} \sim \tfrac{1}{\sqrt{2}} \lambda^{-\frac{3}{4}}
	\]
	as $\lambda \to \infty$.
	Hence $\|\lambda R(\lambda,A)\| \sim c \lambda^{\frac{1}{4}}$ as $\lambda \to \infty$
	for some constant $c > 0$, which shows that $A_2$ is not a Hille-Yosida operator in
	the sense of~\cite[\S 3.5]{ABHN01}. This was already clear since every Hille-Yosida operator
	on reflexive space is densely defined~\cite[Proposition~3.3.8]{ABHN01}.
\end{example}

\section{Regularity}\label{sec:reg}

The goal of this section is to show that for $u_0 \in \mathrm{C}(\overline{\Omega})$ the weak solution
of $(P_{u_0,f,g})$ is continuous on the parabolic cylinder $[0,\infty) \times \overline{\Omega}$,
so in particular continuous up to the boundary.
The main tool is the following pointwise a priori estimate, which we will use also for the study
of the asymptotic behavior.

\begin{proposition}\label{prop:Linfbound}
	Fix $T > 0$. Let $r_1, r_2, q_1, q_2 \in [2, \infty)$ satisfy
	\begin{equation}\label{eq:qrrel}
		\frac{1}{r_1} + \frac{N}{2q_1} < 1
		\qquad\text{and}\qquad
		\frac{1}{r_2} + \frac{N-1}{2q_2} < \frac{1}{2}.
	\end{equation}
	Let $u_0 \in L^2(\Omega)$, $f \in L^{r_1}(0,T; L^{q_1}(\Omega) )$ and $g \in L^{r_2}(0,T; L^{q_2}(\partial\Omega) )$ be given
	and denote by $u$ the weak solution of $(P_{u_0,f,g})$.
	Then
	\begin{equation}\label{eq:Linfbound}
		\|u\|_{L^\infty(\frac{T}{2},T; L^\infty(\Omega)}^2
			\le c \|u\|_{L^2(0,T; L^2(\Omega)}^2
				+ c \|f\|_{L^{r_1}(0,T;L^{q_1}(\Omega))}^2
				+ c \|g\|_{L^{r_2}(0,T;L^{q_2}(\partial\Omega))}^2,
	\end{equation}
	where $c$ depends only on $T$, $N$, $\Omega$, $r_1$, $q_1$, $r_2$, $q_2$ and the coefficients of the equation.

	If we have $u_0 = 0$, then we obtain the global estimate
	\begin{equation}\label{eq:Linfboundglob}
		\|u\|_{L^\infty(0,T; L^\infty(\Omega)}^2
			\le c \|u\|_{L^2(0,T; L^2(\Omega)}^2
				+ c \|f\|_{L^{r_1}(0,T_0;L^{q_1}(\Omega))}^2
				+ c \|g\|_{L^{r_2}(0,T_0;L^{q_2}(\partial\Omega))}^2.
	\end{equation}
\end{proposition}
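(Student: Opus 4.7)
The plan is to carry out a Moser iteration on the weak formulation, adapted to handle the Robin boundary inhomogeneity $g$. By the construction of the weak solution in the proof of Theorem~\ref{thm:mainex}, $u$ is the limit in $\mathrm{C}([0,T];L^2(\Omega)) \cap L^2(0,T;H^1(\Omega))$ of classical $L^2$-solutions with smooth data; since both \eqref{eq:Linfbound} and \eqref{eq:Linfboundglob} are monotone in the norms of the data, it suffices to prove them for classical solutions and then pass to the limit. A shift $u(t) \mapsto \e^{-\omega t} u(t)$ with $\omega$ from \eqref{eq:aelliptic} further reduces matters to the coercive case at the price of a factor $\e^{\omega T}$ in the final constant.

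For a smooth cutoff $\chi \in \mathrm{C}^\infty([0,T])$ with $0 \le \chi \le 1$ and an exponent $p \ge 2$, I test the equation pointwise in $t$ (via Remark~\ref{rem:formform}, valid for classical solutions) with $\psi = |u|^{p-2} u\, \chi^2$, suitably truncated at level $k$ and then letting $k \to \infty$. Integrating in time, using \eqref{eq:aelliptic} and Young's inequality to absorb the lower-order terms of $a_\beta$, one obtains, with $v \coloneqq |u|^{p/2}$,
\begin{equation*}
\sup_{t \in [0,T]} \int_\Omega v(t)^2 \chi(t)^2 + \int_0^T\!\!\int_\Omega |\nabla v|^2 \chi^2
\le C_p \Bigl( \int_0^T\!\!\int_\Omega v^2\, |\chi\chi'| + \int_0^T\!\!\int_\Omega f\, |u|^{p-2}u\, \chi^2 + \int_0^T\!\!\int_{\partial\Omega} g\, |u|^{p-2}u\, \chi^2 \Bigr).
\end{equation*}
By the parabolic Gagliardo--Nirenberg--Ladyzhenskaya inequality and the trace embedding $H^1(\Omega) \hookrightarrow L^{2(N-1)/(N-2)}(\partial\Omega)$, the left-hand side controls mixed norms of $v\chi$ in $L^s(0,T;L^q(\Omega))$ whenever $\tfrac{1}{s} + \tfrac{N}{2q} \ge \tfrac{N}{4}$ and, analogously, in $L^{\tilde s}(0,T;L^{\tilde q}(\partial\Omega))$ under the corresponding condition with $N-1$ in place of $N$.

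Hypotheses \eqref{eq:qrrel} are tuned precisely so that H\"older's inequality estimates
\begin{align*}
\Bigl|\int_0^T\!\!\int_\Omega f\, |u|^{p-2}u\, \chi^2 \Bigr|
  & \le \|f\|_{L^{r_1}(L^{q_1})}\, \|v\chi\|_{L^{2r_1'}(L^{2q_1'})}^2, \\
\Bigl|\int_0^T\!\!\int_{\partial\Omega} g\, |u|^{p-2}u\, \chi^2 \Bigr|
  & \le \|g\|_{L^{r_2}(L^{q_2})}\, \|v\chi\|_{L^{2r_2'}(L^{2q_2'}(\partial\Omega))}^2,
\end{align*}
with the pairs $(2r_i', 2q_i')$ lying \emph{strictly} inside the admissible regions described above. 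Interpolation and an $\eps$-Young absorption into the left-hand side then yield a self-improving inequality of the form
\begin{equation*}
\|u\chi\|_{L^{p\sigma}((0,T)\times\Omega)}^2 \le C_p \bigl( \|u\|_{L^p(\supp\chi';\,L^p(\Omega))}^2 + \|f\|_{L^{r_1}(L^{q_1})}^2 + \|g\|_{L^{r_2}(L^{q_2})}^2 \bigr)
\end{equation*}
with a gain factor $\sigma > 1$ depending only on $N$ and on the exponents in \eqref{eq:qrrel}.

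Iterating this inequality with $p_n = 2 \sigma^n$ and a sequence of cutoffs $\chi_n$ supported in nested intervals shrinking to $[T/2, T]$, then summing $\log C_{p_n}/p_n$, gives \eqref{eq:Linfbound} in the limit $n \to \infty$. For \eqref{eq:Linfboundglob} one takes $\chi \equiv 1$ on $[0,T]$: since $u_0 = 0$ the integration by parts in time produces no contribution at $t = 0$, the $\chi'$-term vanishes identically, and the same iteration runs on the full interval. The main obstacle is the \emph{simultaneous} closing of the bulk and boundary steps of the Moser scheme: the two conditions in \eqref{eq:qrrel} are the scaling-sharp ones that make this possible, and tracking the $p$-dependence of $C_p$ carefully enough for the iteration to converge is precisely the technical bookkeeping that justifies deferring the full argument to Appendix~\ref{app:pointest}.
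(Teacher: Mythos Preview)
Your approach is sound but genuinely different from the paper's. The paper runs a \emph{De Giorgi} iteration: it tests with the truncations $u^{(k)}=(u-k)^+$ and a time cutoff $\zeta$, derives an energy inequality for $u^{(k)}$ in which the right-hand side involves the measures $|A_k(t)|=|\{u(t)>k\}|$ on $\Omega$ and $|B_k(t)|$ on $\partial\Omega$, and then iterates over a geometric sequence of levels $k_n\nearrow 2M$ and shrinking time intervals via a two-sequence recursion (Lemma~\ref{lem:verblemma}). The boundary datum $g$ enters through the sequence $z_n$ that tracks $|B_{k_n}|$, and the threshold $\hat k$ is exactly the quantity in~\eqref{eq:khat}. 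Your proposal instead runs a \emph{Moser} iteration over powers $p_n=2\sigma^n$, testing with $|u|^{p-2}u\,\chi^2$ and using the anisotropic embedding plus the trace inequality to close.

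Both schemes work under~\eqref{eq:qrrel}; your H\"older computation showing that $(2r_i',2q_i')$ lands strictly inside the admissible parabolic range is exactly the same scaling condition that in the paper produces the strictly positive $\kappa_1,\kappa_2$ in~\eqref{eq:qrrelk}. The De Giorgi route has the advantage that the unbounded lower-order coefficients $b_j,c_i,d,\beta$ only ever meet the \emph{linear} test function $u^{(k)}$, so no $p$-dependent constants arise from them; it also makes the linear dependence of the final bound on $\hat k$ (hence on $\|f\|$ and $\|g\|$) transparent. Your Moser route is more compact but you must track that the constants coming from $b_j\in L^q$, $d\in L^{q/2}$, $\beta\in L^{q-1}$ grow only polynomially in $p$ after absorption (they do, since $q>N$ gives strict subcriticality), and you need a device such as replacing $|u|$ by $|u|+\hat k$ to make the inhomogeneous terms $\int f|u|^{p-2}u$ and $\int_{\partial\Omega} g|u|^{p-2}u$ fit the form $\|f\|\cdot\|v\chi\|^2$ you wrote; as stated, your displayed self-improving inequality is dimensionally off by this additive shift, but that is a routine fix.
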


The proof of Proposition~\ref{prop:Linfbound} is lengthy and technical. We postpone it to Appendix~\ref{app:pointest}
in order not to interrupt the train of thought.
We will use mainly the following consequence of Proposition~\ref{prop:Linfbound},
which arises from combining it with Proposition~\ref{prop:semigroup}.
\begin{theorem}\label{thm:inLinf}
	Let $T > 0$ be arbitrary, let $f$ and $g$ satisfy the conditions of Proposition~\ref{prop:Linfbound}
	and let $u_0 \in L^\infty(\Omega)$ be given. Then the weak solution $u$ of $(P_{u_0,f,g})$
	satisfies
	\begin{equation}\label{eq:inLinf}
		\begin{aligned}
			\|u\|_{L^\infty(0,T; L^\infty(\Omega))}
				\le c \|u_0\|_{L^\infty(\Omega)}
				+ c \|f\|_{L^{r_1}(0,T;L^{q_1}(\Omega))} + c \|g\|_{L^{r_2}(0,T;L^{q_2}(\Omega))},
		\end{aligned}
	\end{equation}
	where $c$ depends on the same parameters as in Proposition~\ref{prop:Linfbound}.
\end{theorem}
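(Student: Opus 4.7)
The plan is to decompose the solution by linearity as $u = v + w$, where $v$ solves the homogeneous problem $(P_{u_0,0,0})$ and $w$ solves $(P_{0,f,g})$ with zero initial data; both exist and are unique by Theorem~\ref{thm:mainex} and Proposition~\ref{prop:semigroup}. The motivation for this splitting is that it separates the two ingredients that appear in the statement: the $L^\infty$-dependence on $u_0$, which is furnished by the semigroup bound, and the $L^\infty$-dependence on $f,g$, which is furnished by the \emph{global} a priori estimate~\eqref{eq:Linfboundglob} (applicable precisely because $w$ has vanishing initial data).

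For $v = T_{2,h}(\cdot) u_0$, Proposition~\ref{prop:semigroup}\eqref{it:semigroup_Linfbdd} directly gives
\[
\|v\|_{L^\infty(0,T;L^\infty(\Omega))} \le M \e^{|\omega| T} \|u_0\|_{L^\infty(\Omega)}.
\]
For $w$, I apply~\eqref{eq:Linfboundglob} to obtain
\[
\|w\|_{L^\infty(0,T;L^\infty(\Omega))}^2 \le c \|w\|_{L^2(0,T;L^2(\Omega))}^2 + c \|f\|_{L^{r_1}(0,T;L^{q_1}(\Omega))}^2 + c \|g\|_{L^{r_2}(0,T;L^{q_2}(\partial\Omega))}^2.
\]
It remains to control $\|w\|_{L^2(0,T;L^2(\Omega))}$. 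Here I use Lemma~\ref{lem:H1est} applied to $w$ (whose initial data vanishes), which yields
\[
\|w\|_{L^\infty(0,T;L^2(\Omega))}^2 \le c \|f\|_{L^2(0,T;L^2(\Omega))}^2 + c \|g\|_{L^2(0,T;L^2(\partial\Omega))}^2,
\]
and hence $\|w\|_{L^2(0,T;L^2(\Omega))}^2 \le T \|w\|_{L^\infty(0,T;L^2(\Omega))}^2$ bounds the missing term. Strictly speaking Lemma~\ref{lem:H1est} is stated for classical $L^2$-solutions, but the inequality passes to the weak-solution limit by the approximation argument used in the proof of Theorem~\ref{thm:mainex}, so this is available.

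To put all norms into the scales required by the statement, I invoke H\"older's inequality: since $\Omega$ is bounded and $r_i, q_i \ge 2$,
\[
\|f\|_{L^2(0,T;L^2(\Omega))} \le |\Omega|^{\frac{1}{2}-\frac{1}{q_1}} T^{\frac{1}{2}-\frac{1}{r_1}} \|f\|_{L^{r_1}(0,T;L^{q_1}(\Omega))},
\]
and analogously for $g$ with $\partial\Omega$ in place of $\Omega$. Combining the bounds for $v$ and $w$ via $\|u\|_{L^\infty(0,T;L^\infty(\Omega))} \le \|v\|_{L^\infty(0,T;L^\infty(\Omega))} + \|w\|_{L^\infty(0,T;L^\infty(\Omega))}$ then yields~\eqref{eq:inLinf}, with a constant $c$ depending on $T$, $N$, $\Omega$, the exponents $r_i, q_i$, and the coefficients. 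The only mildly delicate step is the implicit use of the approximation argument to transfer Lemma~\ref{lem:H1est} to weak solutions; everything else is a direct concatenation of already-proved results.
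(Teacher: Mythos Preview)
Your proof is correct and follows essentially the same approach as the paper: split $u$ by linearity into the semigroup orbit of $u_0$ and the weak solution with zero initial data, bound the first piece via Proposition~\ref{prop:semigroup}\eqref{it:semigroup_Linfbdd}, bound the second via the global estimate~\eqref{eq:Linfboundglob} together with Lemma~\ref{lem:H1est} and H\"older's inequality. The paper also notes explicitly that Lemma~\ref{lem:H1est} extends to weak solutions by the approximation argument of Theorem~\ref{thm:mainex}, exactly as you anticipated.
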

\begin{proof}
	By linearity we have
	$u(t) = T_{2,h}(t)u_0 + v(t)$, where $(T_{2,h}(t))_{t \ge 0}$ has been introduced in Proposition~\ref{prop:semigroup}
	and $v$ is the weak solution of $(P_{0,f,g})$.
	Hence we deduce from~\eqref{eq:Linfboundglob} and Proposition~\ref{prop:semigroup} that
	\begin{align*}
		\|u\|_{L^\infty(0,T; L^\infty(\Omega))}^2
			& \le 2 \sup_{0 \le t \le T} \|T_{2,h}(t)u_0\|_{L^\infty(\Omega)}^2
				+ 2 \|v\|_{L^\infty(0,T; L^\infty(\Omega))}^2 \\
			& \le 2M^2 \e^{2|\omega| T} \|u_0\|_{L^\infty(\Omega)}^2
				+ 2c \, \|v\|_{L^2(0,T; L^2(\Omega)}^2 \\
				& \qquad + 2c \, \|f\|_{L^{r_1}(0,T;L^{q_1}(\Omega))}^2 + 2c \, \|g\|_{L^{r_2}(0,T;L^{q_2}(\Omega))}^2.
	\end{align*}
	In addition, by Lemma~\ref{lem:H1est} and H\"older's inequality we have
	\[
		\|v(s)\|_{L^2(\Omega)}^2
				\le c \|u_0\|_{L^\infty(\Omega)}^2 + c \|f\|_{L^{r_1}(0,T;L^{q_1}(\Omega))}^2 + c \|g\|_{L^{r_2}(0,T;L^{q_2}(\Omega))}^2
	\]
	for all $s \in [0,T]$, where we note that by the proof of Theorem~\ref{thm:mainex} the lemma
	is valid for all weak solutions, not only classical solutions.
	Combining these two estimates we have proved~\eqref{eq:inLinf}.
\end{proof}

We use~\eqref{eq:inLinf} to deduce continuity of the solution up to the boundary of the parabolic cylinder,
which is our main regularity result.

\begin{theorem}\label{thm:inhomcont}
	Let $T > 0$ be arbitrary, let $f$ and $g$ satisfy the conditions of Proposition~\ref{prop:Linfbound}
	and let $u_0 \in \mathrm{C}(\overline{\Omega})$ be given. Then the weak solution $u$ of $(P_{u_0,f,g})$
	is in $\mathrm{C}([0,T]; \mathrm{C}(\overline{\Omega}))$. So in particular $u(t) \to u_0$ uniformly
	on $\overline{\Omega}$ as $t \to 0$.
\end{theorem}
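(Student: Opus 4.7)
My strategy is decomposition plus approximation, with Theorem~\ref{thm:inLinf} providing the passage to the limit. By linearity and uniqueness in Theorem~\ref{thm:mainex}, I decompose $u = v + w$, where $v(t) \coloneqq T_{2,h}(t)u_0$ is the semigroup orbit from Proposition~\ref{prop:semigroup} solving $(P_{u_0,0,0})$, and $w$ is the weak solution of $(P_{0,f,g})$. Property~(iii) of Proposition~\ref{prop:semigroup} yields $v \in \mathrm{C}([0,T];\mathrm{C}(\overline{\Omega}))$ immediately, so the task reduces to proving $w \in \mathrm{C}([0,T];\mathrm{C}(\overline{\Omega}))$ with $w(0) = 0$ uniformly.

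For $w$, I approximate by smooth compactly supported sequences $f_n \in \mathrm{C}_c^\infty((0,T)\times\Omega)$ and $g_n \in \mathrm{C}_c^\infty((0,T)\times\partial\Omega)$ converging to $f$ and $g$ in $L^{r_1}(0,T;L^{q_1}(\Omega))$ and $L^{r_2}(0,T;L^{q_2}(\partial\Omega))$ respectively. Since $u_0 = 0$ trivially fulfils every compatibility condition of Proposition~\ref{prop:respossol}, each problem $(P_{0,f_n,g_n})$ has a classical $L^2$-solution $w_n$. Applying Theorem~\ref{thm:inLinf} to the differences $w_n - w_m$ (weakly solving $(P_{0,f_n - f_m,g_n - g_m})$) shows that $(w_n)$ is Cauchy in $L^\infty((0,T)\times\Omega)$, and its uniform limit must coincide with $w$ by uniqueness. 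Because $\mathrm{C}([0,T];\mathrm{C}(\overline{\Omega}))$ is closed under uniform convergence on $[0,T]\times\overline{\Omega}$ and $w_n(0) = 0$ for every $n$, everything reduces to proving that each $w_n$ lies in $\mathrm{C}([0,T];\mathrm{C}(\overline{\Omega}))$.

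To obtain this continuity for the classical solutions with smooth data, I plan to lift the boundary inhomogeneity. Let $L\colon L^{q_2}(\partial\Omega) \to H^1(\Omega)$ send $g$ to the unique weak solution of the elliptic Robin problem $-Av + v = 0$ in $\Omega$, $\frac{\partial v}{\partial\nu_A} + \beta v = g$ on $\partial\Omega$, which is well-defined by the Lax-Milgram argument of Lemma~\ref{lem:DA2}. For smooth $g_n$, a stationary version of the pointwise estimates used in Proposition~\ref{prop:Linfbound}, combined with Proposition~\ref{prop:semigroup}(ii), places $Lg_n(t) \in \mathrm{C}(\overline{\Omega})$ with the map $t \mapsto Lg_n(t)$ inheriting the temporal smoothness of $g_n$. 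Now $\tilde w_n \coloneqq w_n - Lg_n$ satisfies the homogeneous Robin boundary condition, so it is a classical solution of the abstract Cauchy problem for $A_{2,h}$ with inhomogeneity $h_n \coloneqq f_n + Lg_n - (Lg_n)_t$ and initial value zero. Variation of constants then gives
\[
	\tilde w_n(t) = \int_0^t T_{2,h}(t-s) h_n(s) \, \dx s.
\]
Since $T_{2,h}$ restricts to a $\mathrm{C}_0$-semigroup on $\mathrm{C}(\overline{\Omega})$ by Proposition~\ref{prop:semigroup}(iii) and the integrand lies in $\mathrm{C}([0,T];\mathrm{C}(\overline{\Omega}))$, this integral belongs to $\mathrm{C}([0,T];\mathrm{C}(\overline{\Omega}))$; adding back $Lg_n$ gives the same conclusion for $w_n$.

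The hardest part I foresee is securing the $\mathrm{C}(\overline{\Omega})$-regularity of the lifting $Lg_n$ on a Lipschitz domain with merely measurable coefficients. This is a genuine pointwise elliptic regularity statement that is not afforded by the bilinear-form framework alone and requires techniques of the same flavour as those developed in the Appendix for the parabolic problem, adapted to the stationary setting.
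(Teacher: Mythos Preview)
Your overall skeleton coincides with the paper's: split $u=T_{2,h}(\cdot)u_0+w$, approximate $(f,g)$ by smooth data vanishing at $t=0$, prove each approximant $w_n\in\mathrm{C}([0,T];\mathrm{C}(\overline{\Omega}))$, and pass to the limit via Theorem~\ref{thm:inLinf}. The genuine divergence is in how you obtain continuity of $w_n$. The paper works on the product space $X=L^{q_1}(\Omega)\times L^{q_2}(\partial\Omega)$, shows by elliptic regularity that the domain of the part $A_X$ of $A_2$ in $X$ lies in $\mathrm{C}(\overline{\Omega})\times\{0\}$, and then invokes the once integrated semigroup generated by $A_X$ to produce a classical $X$-solution $w_n\in\mathrm{C}([0,T];D(A_X))\subset\mathrm{C}([0,T];\mathrm{C}(\overline{\Omega}))$. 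Your route instead lifts the boundary datum via an elliptic solution operator $L$, subtracts to obtain a homogeneous-boundary problem, and applies the variation-of-constants formula for the $\mathrm{C}_0$-semigroup on $\mathrm{C}(\overline{\Omega})$. Both approaches ultimately hinge on the same elliptic regularity statement (inhomogeneous Robin data in $L^{q_2}(\partial\Omega)$ with $q_2>N-1$ produce $\mathrm{C}(\overline{\Omega})$-solutions); the paper simply quotes it from \cite{Nit11,Diss}, whereas you correctly flag it as the substantive analytic input. Your lifting argument is the more classical one and avoids integrated semigroups, while the paper's version stays within the abstract framework it has already set up.

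Two small points. First, your lift $L$ solves $-Av+v=0$, i.e.\ $(1-A_2)(v,0)=(0,g)$, but Lemma~\ref{lem:DA2} only guarantees invertibility of $\lambda-A_2$ for $\lambda>\omega$; take $\lambda$ large rather than $\lambda=1$. Second, the appeal to Proposition~\ref{prop:semigroup}(ii) for the continuity of $Lg_n(t)$ is misplaced: that item concerns the parabolic orbit $T_{2,h}(t)u_0$, not the elliptic solution with inhomogeneous boundary data. What you actually need is that $L$ is bounded from $L^{q_2}(\partial\Omega)$ (or $\mathrm{C}(\partial\Omega)$) into $\mathrm{C}(\overline{\Omega})$, which is exactly the elliptic regularity result you identify in your final paragraph; once that bound is in hand, temporal continuity of $t\mapsto Lg_n(t)$ follows from the smoothness of $g_n$ and linearity of $L$.
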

\begin{proof}
	Let $A_X$ denote the realization of $A$ in $X \coloneqq L^{q_1}(\Omega) \times L^{q_2}(\partial\Omega)$
	with the same boundary conditions as $A_2$, i.e.,
	\begin{align*}
		D(A_X) & \coloneqq \Bigl\{ (u,0) \in D(A_2) : Au \in L^{q_1}(\Omega), \; \frac{\partial u}{\partial \nu_A} \in L^{q_2}(\partial\Omega) \Bigr\} \\
		A_X(u,0) & \coloneqq \Bigl( Au, \; -\frac{\partial u}{\partial \nu_A} - \beta u|_{\partial\Omega} \Bigr).
	\end{align*}
	Thus $(u,0) \in D(A_X)$ if and only if there exist $f \in L^{q_1}(\Omega)$ and $g \in L^{q_2}(\partial\Omega)$
	such that $u$ solves
	\[
		\left\{ \begin{aligned}
			Au & = f && \text{on } \Omega \\
			\frac{\partial u}{\partial \nu_A} + \beta u & = g && \text{on } \partial\Omega
		\end{aligned} \right.
	\]
	in the weak sense. Since by~\eqref{eq:qrrel} we have in particular $q_1 > \frac{N}{2}$ and $q_2 > \frac{N-1}{2}$,
	elliptic regularity theory shows that in this case $u \in \mathrm{C}(\overline{\Omega})$,
	compare~\cite[Theorem~3.14]{Nit11} for bounded coefficients or~\cite[Example~4.2.7]{Diss} for the general case.
	Hence $D(A_X) \subset \mathrm{C}(\overline{\Omega}) \times \{0\}$ and in particular $D(A_X) \subset X$.
	Hence $A_X$ is the part of the resolvent positive operator $A_2$ in $X$, and hence is resolvent positive.
	Thus $A_X$ generates a once integrated semigroup on $X$ by~\cite[Theorem~3.11.7]{ABHN01}.

	Pick sequences $(f_n) \subset \mathrm{C}^2( [0,T]; L^\infty(\Omega) )$ and
	$(g_n) \subset \mathrm{C}^2( [0,T]; L^\infty(\partial\Omega) )$
	that satisfy $f_n(0) = 0$, $g_n(0) = 0$, $f_n \to f$ in $L^{r_1}(0,T; L^{q_1}(\Omega))$ and
	$g_n \to g$ in $L^{r_2}(0,T; L^{q_2}(\partial\Omega))$, and let $v_n$ denote the weak solution of $(P_{0,f_n,g_n})$.

	By~\cite[Corollary~3.2.11]{ABHN01} the abstract Cauchy problem
	\[
		\left\{ \begin{aligned}
			\dot{W}_n(t) & = A_XW_n(t) + (f_n(t), g_n(t)) \\
			W(0) & = (0,0)
		\end{aligned} \right.
	\]
	has a unique solution $W_n = (w_n, 0) \in \mathrm{C}^1([0,T]; X) \cap \mathrm{C}([0,T]; D(A_X))$, and
	in particular we have $w_n \in \mathrm{C}([0,T]; \mathrm{C}(\overline{\Omega}))$;
	we could call $w_n$ a \emph{classical $X$-solution of $(P_{0,f_n,g_n})$} in analogy to Definition~\ref{def:semisol}. 
	The function $w_n$ is in particular a classical $L^2$-solution of~\eqref{def:semisol}, hence $w_n = v_n$
	by uniqueness. We have shown that $v_n \in \mathrm{C}([0,T]; \mathrm{C}(\overline{\Omega}))$.

	Now, since by Theorem~\ref{thm:inLinf} we have $v_n \to v$ uniformly on $[0,T] \times \overline{\Omega}$,
	where $v$ denotes the weak solution of $(P_{0,f,g})$,
	we deduce that $v \in \mathrm{C}([0,T]; \mathrm{C}(\overline{\Omega}))$.
	Hence, since $u(t) = T_{2,h}(t)u_0 + v(t)$ with $(T_{2,h}(t))_{t \ge 0}$ defined in Proposition~\ref{prop:semigroup},
	continuity of $u$ follows from Proposition~\ref{prop:semigroup}.
\end{proof}

\begin{remark}\label{rem:latercont}
	If in Theorem~\ref{thm:inhomcont} we only have $u_0 \in L^2(\Omega)$ instead of $u_0 \in \mathrm{C}(\overline{\Omega})$,
	we still obtain that $u|_{[t_0,T]} \in \mathrm{C}([t_0,T]; \mathrm{C}(\overline{\Omega}))$ for all $t_0 \in (0,T)$.
	In fact, this can be seen easily from the proof since by Proposition~\ref{prop:semigroup}
	$t \mapsto T_{2,h}(t)u_0$ is continuous from $[t_0,\infty)$ to $\mathrm{C}(\overline{\Omega})$ for every $t_0 > 0$.

	In particular, $u_0 \in \mathrm{C}(\overline{\Omega})$ is a necessary condition for the convergence
	$u(t) \to u_0$ as $t \to 0$ to be uniform on $\overline{\Omega}$.
	Theorem~\ref{thm:inhomcont} shows that it is also sufficient if $f$ and $g$ do not behave too badly.
\end{remark}

We close this section by a comparison with the situation for Dirichlet boundary conditions.
\begin{remark}
	For the Dirichlet initial-boundary value problem studied in~\cite{Are00} one has to work with a
	realization $A_{c,D}$ of $A$ with Dirichlet boundary conditions in a space of continuous functions because
	$L^p$-regularity conditions on the boundary data do not suffice in order to obtain
	continuous solutions, which contrasts the situation in Theorem~\ref{thm:inhomcont} for Neumann boundary data.
	This leads to a minor difficulty. More precisely,
	since $\mathrm{C}(\partial\Omega)$ does not have order continuous norm, it is not immediately clear that $A_{c,D}$
	is the generator of a once integrated semigroup. In fact, this is even false since if $A_{c,D}$ were the generator of a once integrated
	semigroup, then by~\cite[Corollary~3.2.11]{ABHN01} there would exist a mild solution of the corresponding abstract Cauchy problem
	\[
		\left\{ \begin{aligned}
			u_t(t) & = \Delta u(t) \\
			u(t)|_{\partial\Omega} & = \phi(t) \\
			u(0) & = u_0
		\end{aligned} \right.
	\]
	regardless of any compatibility assumptions between $\phi \in \mathrm{C}^1([0,\infty); \mathrm{C}(\partial\Omega))$
	and $u_0 \in \mathrm{C}(\overline{\Omega})$.
	This contradicts the simple observation that the existence of a mild solution enforces the condition $\phi(0) = u_0|_{\partial\Omega}$,
	see~\cite[Proposition~3.2]{Are00}.
	Still, $A_{c,D}$ generates a twice integrated semigroup~\cite[Theorem~3.11.5]{ABHN01}, which is sufficient
	for the results in~\cite{Are00}.

	The situation is different for Neumann boundary conditions, as we can already expect from the fact that
	no compatibility condition appears in Theorem~\ref{thm:inhomcont}. In fact, we have a once integrated semigroup
	in that case. In order to see this, consider the realization $A_c$
	in $\mathrm{C}(\overline{\Omega}) \times \mathrm{C}(\partial\Omega)$ of $A$ with Robin boundary conditions
	and set $Z \coloneqq \mathrm{C}(\overline{\Omega}) \times \{0\}$. Then $D(A_c) \subset Z$, the space $Z$ is invariant
	under the resolvent of $A_c$ and the part of $A_c$ in $Z$ is the generator of a strongly continuous
	semigroup~\cite[Theorem~4.3]{Nit11}. Hence by~\cite[Theorem~3.10.4]{ABHN01} the operator $A_c$ generates
	a once integrated semigroup on $\mathrm{C}(\overline{\Omega}) \times \mathrm{C}(\partial\Omega)$.

	It can be seen from Example~\ref{ex:hille} that the operator $A_c$ fails to be a Hille-Yosida operator.
	In this respect, the situation is the same as for Dirichlet boundary conditions~\cite[Remark~2.5~b)]{Are00}.
\end{remark}

\section{Convergence}\label{sec:conv}

In this section we study boundedness of the solution $u$ of $(P_{u_0,f,g})$ as $t \to \infty$. We are not interested
in (exponential) blow-up or decay, but want to consider the border case only. Inspired by our model case, i.e.,
$A = \Delta$ and $\beta = 0$, a natural condition that helps with this issue is to assume conservation of total energy, i.e.,
\begin{equation}\label{eq:preserve}
	\int_\Omega u(t) = \int_\Omega u_0 + \int_0^t \int_\Omega f(s) + \int_0^t \int_{\partial\Omega} g(s)
\end{equation}
for all $t > 0$. We restrict ourselves to this situation, which can be characterized as follows.

\begin{proposition}\label{prop:charpres}
	The following assertions are equivalent:
	\begin{enumerate}[(i)]
	\item\label{eq:charpres_all}
		for every $T > 0$, $f \in L^2(0,T;L^2(\Omega))$, $g \in L^2(0,T;L^2(\partial\Omega))$ and $u_0 \in L^2(\Omega)$
		relation~\eqref{eq:preserve} holds for all $t \in [0,T]$,
		where $u$ is the weak solution of~$(P_{u_0,f,g})$;
	\item\label{eq:charpres_zero}
		for every $u_0 \in L^2(\Omega)$ we have $\int_\Omega u(t) = \int_\Omega u_0$ for all $t > 0$,
		where $u$ is the weak solution of~$(P_{u_0,0,0})$;
	\item\label{eq:charpres_coeff}
		the relation
		\begin{equation}\label{eq:assc}
			\left\{ \begin{aligned}
				\sum\nolimits_{i=1}^N c_i & = d && \text{on } \Omega \\
				\sum\nolimits_{i=1}^N c_i \, \nu_i & = -\beta && \text{on } \partial\Omega
			\end{aligned} \right.
		\end{equation}
		holds in the weak sense, i.e.,
		\[
			\sum_{i=1}^N \int_\Omega c_i \, D_i\eta + \int_\Omega d \eta + \int_{\partial\Omega} \beta \eta = 0
			\quad \text{for all } \eta \in H^1(\Omega).
		\]
	\end{enumerate}
\end{proposition}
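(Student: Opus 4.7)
The plan is to prove the cycle $(\text{i}) \Rightarrow (\text{ii}) \Rightarrow (\text{iii}) \Rightarrow (\text{i})$. The implication $(\text{i}) \Rightarrow (\text{ii})$ is immediate, by setting $f = 0$ and $g = 0$. The key observation underlying the remaining two implications is that, since $\Omega$ is bounded, the constant function $1$ lies in $H^1(\Omega)$ and is therefore admissible as a test function; and a direct inspection of \eqref{eq:defa0}--\eqref{eq:defab} gives, for every $v \in H^1(\Omega)$,
\[
a_\beta(v, 1) = \sum_{i=1}^N \int_\Omega c_i \, D_i v + \int_\Omega d v + \int_{\partial\Omega} \beta v,
\]
so that condition \eqref{eq:charpres_coeff} is nothing but the assertion that $a_\beta(\eta, 1) = 0$ for every $\eta \in H^1(\Omega)$.

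For $(\text{iii}) \Rightarrow (\text{i})$ I repeat the argument from the proof of Theorem~\ref{thm:weakersol}(b) with the particular test function $\psi(s,x) \coloneqq \phi(s)\cdot 1$, where $\phi \in H^1(0,T)$ satisfies $\phi(T) = 0$. This yields the identity
\[
\int_\Omega u(t) - \int_\Omega u_0 = -\int_0^t a_\beta(u(s), 1) \, \dx s + \int_0^t \int_\Omega f(s) + \int_0^t \int_{\partial\Omega} g(s)
\]
for all $t \in [0,T]$. Since $u(s) \in H^1(\Omega)$ for a.e. $s$, hypothesis \eqref{eq:charpres_coeff} forces the first integral on the right to vanish, which is \eqref{eq:preserve}.

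For $(\text{ii}) \Rightarrow (\text{iii})$ the same identity with $f = g = 0$ reduces to $\int_\Omega u(t) - \int_\Omega u_0 = -\int_0^t a_\beta(u(s), 1) \, \dx s$. Hypothesis \eqref{eq:charpres_zero} therefore forces $a_\beta(u(s), 1) = 0$ for almost every $s > 0$ and every $u_0 \in L^2(\Omega)$. Given $\eta \in D(A_{2,h})$, I specialise to $u_0 \coloneqq \eta$; then $u(\,\cdot\,) = T_{2,h}(\,\cdot\,)\eta$ lies in $\mathrm{C}([0,\infty); D(A_{2,h}))$, and the continuous embedding $D(A_{2,h}) \hookrightarrow H^1(\Omega)$ granted by Lemma~\ref{lem:DA2} promotes this to $\mathrm{C}([0,\infty); H^1(\Omega))$-regularity. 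Continuity of the bounded linear functional $a_\beta(\,\cdot\,, 1)$ on $H^1(\Omega)$ then allows me to pass to $s = 0$ and conclude $a_\beta(\eta, 1) = 0$.

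To finish, this identity must be upgraded from $D(A_{2,h})$ to all of $H^1(\Omega)$, and this is where I expect the only non-routine point. I would appeal to density of $D(A_{2,h})$ in $H^1(\Omega)$ for the $H^1$-topology, which is a general feature of operators associated with a continuous, coercive (after a shift) sesquilinear form on $V = H^1(\Omega)$; for the form $a_\beta$ of Proposition~\ref{prop:semigroup} this is precisely the required setting. Granted this density, continuity of the functional $a_\beta(\,\cdot\,, 1)$ on $H^1(\Omega)$ extends the vanishing to every $\eta \in H^1(\Omega)$, which is exactly \eqref{eq:charpres_coeff}. The main conceptual obstacle is thus not a computation but the identification of the correct density statement for the form-associated operator $A_{2,h}$.
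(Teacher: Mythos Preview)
Your proof is correct. The implication $(\text{iii})\Rightarrow(\text{i})$ coincides with the paper's argument: both test the equation against the constant function $\setone_\Omega$ and use that $a_\beta(\eta,\setone_\Omega)=0$ kills the form term; the paper phrases this via the mild-solution identity and Remark~\ref{rem:formform}, you via the weak formulation, but the content is identical.

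For $(\text{ii})\Rightarrow(\text{iii})$ the routes genuinely differ. The paper observes that $\int_\Omega T_{2,h}(t)u_0 = \int_\Omega u_0$ for all $u_0$ says precisely that $\setone_\Omega$ is a fixed point of the adjoint semigroup $(T_{2,h}^\ast(t))_{t\ge 0}$, hence $A_{2,h}^\ast\setone_\Omega = 0$. Since $-A_{2,h}^\ast$ is the operator associated with the adjoint form $a_\beta^\ast(u,v)=a_\beta(v,u)$, this reads $a_\beta(\eta,\setone_\Omega)=0$ for \emph{all} $\eta\in H^1(\Omega)$ directly, with no separate density step. Your approach instead differentiates the conservation identity to obtain $a_\beta(T_{2,h}(s)\eta,\setone_\Omega)=0$ for a.e.\ $s$, then uses $H^1$-continuity of the orbit for $\eta\in D(A_{2,h})$ to evaluate at $s=0$, and finally invokes density of $D(A_{2,h})$ in $H^1(\Omega)$. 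That density is indeed a standard fact for operators associated with bounded $L^2$-elliptic forms (the domain of the operator is always a core for the form), so your argument is complete; the paper's duality trick simply packages the same information more economically by letting the identification $(-A_{2,h})^\ast \leftrightarrow a_\beta^\ast$ absorb the density statement.
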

\begin{proof}
	Assume~\eqref{eq:charpres_coeff} and let $u$ be the weak solution of $(P_{u_0,f,g})$, which
	is a mild $L^2$-solution by Theorem~\ref{thm:weakersol}. By Remark~\ref{rem:formform} we have
	\begin{equation}\label{eq:charpres_weak}
		a_\beta\Bigl( \int_0^t u(s), v \Bigr) = \int_0^t \int_\Omega f(s) \, v + \int_0^t \int_{\partial\Omega} g(s) \, v - \int_\Omega (u(t) - u_0) \, v
	\end{equation}
	for all $v \in H^1(\Omega)$. Picking $v \coloneqq \setone_\Omega$ and using that by~\eqref{eq:assc} we have
	$a_\beta(\eta,\setone_\Omega) = 0$ for all $\eta \in H^1(\Omega)$ this gives~\eqref{eq:preserve}.

	It is trivial that~\eqref{eq:charpres_all} implies~\eqref{eq:charpres_zero}. So now assume that~\eqref{eq:charpres_zero} holds, i.e.,
	$\int_\Omega T_{2,h}(t)u_0 = \int_\Omega u_0$ for all $t \ge 0$ and all $u_0 \in L^2(\Omega)$,
	where $(T_{2,h}(t))_{t \ge 0}$ is defined in Proposition~\ref{prop:semigroup}. Then $\setone_\Omega$ is a fixed point
	of the adjoint semigroup $(T_{2,h}^\ast(t))_{t \ge 0}$, which implies $A_{2,h}^\ast \setone_{\Omega} = 0$, i.e.,
	$a_\beta(\eta, \setone_\Omega) = 0$ for all $\eta \in H^1(\Omega)$. This is~\eqref{eq:assc}.
\end{proof}

We aim towards a bound of the solution of $(P_{u_0,f,g})$ in $L^\infty(0,\infty; L^\infty(\Omega))$.
As a first step, we consider this problem only for the homogeneous problem $(P_{u_0,0,0})$, as we describe
in the following lemma.
\begin{lemma}\label{lem:Linfcontr}
	Under condition~\eqref{eq:assc} we have $\|u\|_{L^\infty(0,\infty; L^\infty(\Omega))} \le \|u_0\|_{L^\infty(\Omega)}$
	for the weak solution $u$ of $(P_{u_0,0,0})$ if and only if
	\begin{equation}\label{eq:assb}
		\left\{ \begin{aligned}
			\sum\nolimits_{j=1}^N b_j & = d && \text{on } \Omega \\
			\sum\nolimits_{j=1}^N b_j \, \nu_j & = -\beta && \text{on } \partial\Omega
		\end{aligned} \right.
	\end{equation}
	in the weak sense.
\end{lemma}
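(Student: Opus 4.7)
The plan is to reformulate both~\eqref{eq:assc} and~\eqref{eq:assb} in terms of the generator $A_{2,h}$ from Proposition~\ref{prop:semigroup} and its adjoint acting on the constant function $\setone_\Omega$, and then reduce the whole statement to the simple question of whether $\setone_\Omega$ is a fixed point of $(T_{2,h}(t))_{t \ge 0}$.

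As preliminary observation, the calculation carried out in the proof of Proposition~\ref{prop:charpres} shows that the weak form of~\eqref{eq:assc} is precisely $a_\beta(\eta, \setone_\Omega) = 0$ for all $\eta \in H^1(\Omega)$, which by Remark~\ref{rem:formform} (applied to the part $A_{2,h}$ of $A_2$ in $L^2(\Omega) \times \{0\}$) means $\setone_\Omega \in D(A_{2,h}^\ast)$ with $A_{2,h}^\ast \setone_\Omega = 0$; this is nothing but the preservation of total mass proved in Proposition~\ref{prop:charpres}\eqref{eq:charpres_zero}. Symmetrically, the weak form of~\eqref{eq:assb} reads $a_\beta(\setone_\Omega,\eta) = 0$ for all $\eta \in H^1(\Omega)$, which by Remark~\ref{rem:formform} is equivalent to $\setone_\Omega \in D(A_{2,h})$ with $A_{2,h}\setone_\Omega = 0$, i.e., $T_{2,h}(t)\setone_\Omega = \setone_\Omega$ for every $t \ge 0$.

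For the \emph{if} direction I assume~\eqref{eq:assb} and use positivity of $(T_{2,h}(t))_{t \ge 0}$, which follows from Lemma~\ref{lem:DA2} (or directly from Proposition~\ref{prop:respossol}\eqref{ass:comp}). Given $u_0 \in L^\infty(\Omega)$, the pointwise sandwich $-\|u_0\|_{L^\infty(\Omega)} \setone_\Omega \le u_0 \le \|u_0\|_{L^\infty(\Omega)} \setone_\Omega$ is preserved by the positive linear operator $T_{2,h}(t)$, and since $T_{2,h}(t)\setone_\Omega = \setone_\Omega$ by the preliminary observation, I conclude $\|T_{2,h}(t)u_0\|_{L^\infty(\Omega)} \le \|u_0\|_{L^\infty(\Omega)}$. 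By Proposition~\ref{prop:semigroup} this orbit is the unique weak solution of $(P_{u_0,0,0})$, so the claimed bound follows uniformly in $t \ge 0$.

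For the \emph{only if} direction I test the hypothesis on the particular choice $u_0 = \setone_\Omega \in L^\infty(\Omega)$. Positivity of the semigroup together with the assumed bound yields $0 \le T_{2,h}(t)\setone_\Omega \le \setone_\Omega$ almost everywhere. On the other hand, the assumed~\eqref{eq:assc} together with Proposition~\ref{prop:charpres}\eqref{eq:charpres_zero} gives $\int_\Omega T_{2,h}(t)\setone_\Omega = |\Omega|$. Therefore $\setone_\Omega - T_{2,h}(t)\setone_\Omega$ is a non-negative function with vanishing integral, hence zero almost everywhere. Thus $T_{2,h}(t)\setone_\Omega = \setone_\Omega$ for all $t \ge 0$, which forces $\setone_\Omega \in D(A_{2,h})$ with $A_{2,h}\setone_\Omega = 0$, and this is~\eqref{eq:assb} by the preliminary observation. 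The only subtlety worth flagging is the consistent reading of the two sets of boundary-type identities as the dual conditions $A_{2,h}^\ast\setone_\Omega = 0$ and $A_{2,h}\setone_\Omega = 0$; once this is recognized, positivity and mass conservation carry out the rest automatically.
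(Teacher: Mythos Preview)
Your proof is correct and follows essentially the same approach as the paper: both reduce~\eqref{eq:assb} to the statement that $\setone_\Omega$ is a fixed point of the positive semigroup $(T_{2,h}(t))_{t\ge 0}$, deduce $L^\infty$-contractivity from this via positivity, and use mass conservation (i.e.,~\eqref{eq:assc}) together with contractivity to recover the fixed-point property in the converse direction. You spell out the sandwich argument and the integral-zero step more explicitly than the paper does, but the underlying logic is identical.
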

\begin{proof}
	Relation~\eqref{eq:assb} is equivalent to
	$a_\beta(\setone_\Omega, \eta) = 0$ for all $\eta \in H^1(\Omega)$,
	i.e., $A_{2,h} \setone_\Omega = 0$. Hence~\eqref{eq:assb} is equivalent to $\setone_\Omega$ being a fixed point of $(T_{2,h}(t))_{t \ge 0}$,
	where $(T_{2,h}(t))_{t \ge 0}$ is defined in Proposition~\ref{prop:semigroup}.

	Since $(T_{2,h}(t))_{t \ge 0}$ is positive, $T_{2,h}(t)\setone_\Omega = \setone_\Omega$ for all $t \ge 0$
	implies that the semigroup is contractive with respect to the norm of $L^\infty(\Omega)$, which is precisely the bound for $u$.
	On the other hand, if $(T_{2,h}(t))_{t \ge 0}$ is $L^\infty(\Omega)$-contractive and
	$\int_\Omega T_{2,h}(t)u_0 = \int_\Omega u_0$ for all $t \ge 0$, which is satisfied by Proposition~\ref{prop:charpres},
	then $\setone_\Omega$ is a fixed point of $(T_{2,h}(t))_{t \ge 0}$.
\end{proof}
We will see in Corollary~\ref{cor:asLinf} that~\eqref{eq:assb} implies that
also the inhomogeneous problem $(P_{u_0,f,g})$ has bounded solutions if we assume in addition that $\int_\Omega f(t) + \int_{\partial\Omega} g(t) = 0$
for all $t \ge 0$ and the functions $f$ and $g$ are not too irregular.
The first step into this direction is an $L^2$-bound on bounded time intervals, Proposition~\ref{prop:L2bound},
for which we need the following lemma.

\begin{lemma}\label{lem:coercive}
	If~\eqref{eq:assc} and~\eqref{eq:assb} hold, then
	$a_\beta(v,v) \ge \mu \int_\Omega |\nabla v|^2$ for all $v \in H^1(\Omega)$.
\end{lemma}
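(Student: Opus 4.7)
The plan is to expand $a_\beta(v,v)$ explicitly, recognize that the drift and zero-order terms combine into expressions involving $v\nabla v = \frac{1}{2}\nabla(v^2)$, and then use the hypotheses~\eqref{eq:assc} and~\eqref{eq:assb} with the test function $\eta = v^2$ to show these terms cancel exactly, leaving only the principal part, which is controlled by~\eqref{eq:elliptic}.

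More precisely, I first assume $v \in H^1(\Omega) \cap L^\infty(\Omega)$, so that $v^2 \in H^1(\Omega)$ with $D_i(v^2) = 2v\,D_iv$ and $v^2|_{\partial\Omega} = (v|_{\partial\Omega})^2$. Inserting $\eta = v^2$ into the weak form of~\eqref{eq:assc} stated in Proposition~\ref{prop:charpres}, and into the analogous weak form of~\eqref{eq:assb} (which by Lemma~\ref{lem:Linfcontr} is equivalent to $A_{2,h}\setone_\Omega = 0$, i.e., $a_\beta(\setone_\Omega,\eta) = 0$ for all $\eta \in H^1(\Omega)$, and which reads $\sum_j \int_\Omega b_j D_j\eta + \int_\Omega d\eta + \int_{\partial\Omega}\beta\eta = 0$), and adding the two identities, I obtain
\[
	2\sum_{j=1}^N \int_\Omega b_j v\,D_jv + 2\sum_{i=1}^N \int_\Omega c_i v\,D_iv + 2\int_\Omega d v^2 + 2\int_{\partial\Omega}\beta v^2 = 0.
\]
Dividing by two and subtracting this from the expanded expression of $a_\beta(v,v)$ obtained from~\eqref{eq:defa0} and~\eqref{eq:defab} shows that all the lower-order terms disappear and
\[
	a_\beta(v,v) = \int_\Omega \sum_{i,j=1}^N a_{ij}\,D_iv\,D_jv \ge \mu \int_\Omega |\nabla v|^2
\]
by the ellipticity assumption~\eqref{eq:elliptic}.

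To remove the auxiliary boundedness hypothesis on $v$, I approximate a general $v \in H^1(\Omega)$ by its truncations $v_k \coloneqq \max(\min(v,k),-k) \in H^1(\Omega) \cap L^\infty(\Omega)$. Standard truncation results ensure $v_k \to v$ in $H^1(\Omega)$, and continuity of the trace gives $v_k|_{\partial\Omega} \to v|_{\partial\Omega}$ in $L^2(\partial\Omega)$. Since $b_j, c_i \in L^q(\Omega)$ with $q > N$, $d \in L^{q/2}(\Omega)$, and $\beta \in L^{q-1}(\partial\Omega)$, an application of Hölder's inequality together with the Sobolev embedding $H^1(\Omega) \hookrightarrow L^{2^*}(\Omega)$ (and the trace embedding into $L^2(\partial\Omega)$) shows that $a_\beta\colon H^1(\Omega) \times H^1(\Omega) \to \mathds{R}$ is continuous. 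Passing to the limit $k \to \infty$ in the inequality $a_\beta(v_k,v_k) \ge \mu \int_\Omega |\nabla v_k|^2$ therefore yields the claim for all $v \in H^1(\Omega)$.

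The only genuine obstacle is verifying that $\eta = v^2$ is admissible as a test function, which requires $v \in L^\infty$; this is why the truncation/density step is necessary. Everything else is a direct algebraic identification followed by the ellipticity bound.
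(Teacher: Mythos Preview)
Your proof is correct and follows essentially the same route as the paper: reduce to $v\in H^1(\Omega)\cap L^\infty(\Omega)$ so that $\eta=v^2$ is an admissible test function, use the weak forms of~\eqref{eq:assc} and~\eqref{eq:assb} with this $\eta$ to kill the lower-order terms, apply~\eqref{eq:elliptic} to the remaining principal part, and extend to all of $H^1(\Omega)$ by continuity of $a_\beta$. The only cosmetic difference is that the paper applies the ellipticity bound first and then shows the residual terms vanish, whereas you first cancel the lower-order terms and then invoke ellipticity; your density step via truncations is also spelled out in more detail than the paper's one-line appeal to continuity of $a_\beta$.
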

\begin{proof}
	By continuity of $a_\beta$ it suffices to prove the estimate for all $v \in H^1(\Omega) \cap L^\infty(\Omega)$.
	For such $v$ we have by~\eqref{eq:elliptic} and the chain rule that
	\begin{align*}
		a_\beta(v,v)
			& \ge \mu \int_\Omega |\nabla v|^2
				+ \frac{1}{2} \sum_{j=1}^N \int_\Omega b_j \, D_j(v^2)
				+ \frac{1}{2} \sum_{i=1}^N \int_\Omega c_i \, D_i(v^2)
				+ \int_\Omega d v^2
				+ \int_{\partial\Omega} \beta v^2 \\
			& = \mu \int_\Omega |\nabla v|^2,
	\end{align*}
	where in the second step we used the weak formulations of~\eqref{eq:assc} and~\eqref{eq:assb} with $\eta \coloneqq v^2 \in H^1(\Omega)$.
\end{proof}

\begin{proposition}\label{prop:L2bound}
	Let $u$ be the weak solution of $(P_{u_0,f,g})$ on $[0,T]$ for given $u_0 \in L^2(\Omega)$,
	$f \in L^2(0,T; L^2(\Omega))$ and $g \in L^2(0,T; L^2(\partial\Omega) )$.
	Assume that $\int_\Omega u_0 = 0$ and
	\begin{equation}\label{eq:fgintz}
		\int_\Omega f(t) + \int_{\partial\Omega} g(t) = 0 \quad \text{for all } t \ge 0.
	\end{equation}
	If~\eqref{eq:assc} and~\eqref{eq:assb} hold, then there exist $\tau > 0$ and $c \ge 0$ depending
	only on $\mu$ and $\Omega$ such that
	\begin{equation}\label{eq:expdecayest}
		\int_\Omega |u(t)|^2 \le \e^{-t/\tau} \int_\Omega |u_0|^2
			+ c \int_0^t \e^{(s-t)/\tau} \, \Bigl( \int_\Omega |f(s)|^2 + \int_{\partial\Omega} |g(s)|^2 \Bigr) \, \dx s
	\end{equation}
	for all $t \in [0,T]$.
\end{proposition}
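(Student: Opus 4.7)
The plan is to derive an energy inequality, exploit mean-zero-ness via Poincaré--Wirtinger to convert the coercivity of Lemma~\ref{lem:coercive} into pointwise exponential decay, and close with Gronwall.

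First I would establish that $u(t)$ has mean zero for every $t$. This is immediate from Proposition~\ref{prop:charpres}: since~\eqref{eq:assc} is assumed, identity~\eqref{eq:preserve} holds, and together with the hypotheses $\int_\Omega u_0 = 0$ and~\eqref{eq:fgintz} it gives $\int_\Omega u(t) = 0$ for all $t \in [0,T]$. Consequently the Poincaré--Wirtinger inequality supplies a constant $C_P = C_P(\Omega)$ with
\[
	\int_\Omega |u(t)|^2 \le C_P \int_\Omega |\nabla u(t)|^2, \qquad \|u(t)\|_{H^1(\Omega)}^2 \le (1+C_P) \int_\Omega |\nabla u(t)|^2,
\]
and the trace theorem supplies $C_T = C_T(\Omega)$ with $\|v|_{\partial\Omega}\|_{L^2(\partial\Omega)}^2 \le C_T \|v\|_{H^1(\Omega)}^2$.

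Next, as in the proof of Lemma~\ref{lem:H1est}, for a classical $L^2$-solution I would compute
\[
	\tfrac{1}{2}\tfrac{\dx}{\dx t} \int_\Omega |u(t)|^2 = -a_\beta(u(t),u(t)) + \int_\Omega f(t) u(t) + \int_{\partial\Omega} g(t) u(t).
\]
Lemma~\ref{lem:coercive} yields $a_\beta(u,u) \ge \mu\int_\Omega|\nabla u|^2$. I would split this dissipation into two equal pieces: one piece absorbs the source terms via Young's inequality combined with the Poincaré--Wirtinger and trace bounds above, namely
\[
	\Bigl|\int_\Omega fu\Bigr| \le \tfrac{\mu}{4}\int_\Omega|\nabla u|^2 + c_1\int_\Omega |f|^2, \qquad
	\Bigl|\int_{\partial\Omega} gu\Bigr| \le \tfrac{\mu}{4}\int_\Omega|\nabla u|^2 + c_1\int_{\partial\Omega}|g|^2,
\]
with $c_1$ depending only on $\mu$ and $\Omega$. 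The remaining piece of the dissipation, $\mu\int_\Omega|\nabla u|^2$, is bounded below by $(\mu/C_P)\int_\Omega |u|^2$ by Poincaré--Wirtinger. Setting $\tau \coloneqq C_P/\mu$ I arrive at
\[
	\tfrac{\dx}{\dx t}\int_\Omega|u(t)|^2 + \tfrac{1}{\tau}\int_\Omega|u(t)|^2 \le c_2\Bigl(\int_\Omega|f(t)|^2 + \int_{\partial\Omega}|g(t)|^2\Bigr),
\]
and multiplying by $\e^{t/\tau}$ and integrating gives~\eqref{eq:expdecayest} for classical solutions.

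Finally, to obtain the estimate for the weak solution I would invoke the approximation procedure already used in the proof of Theorem~\ref{thm:mainex}: approximate $u_0$, $f$, $g$ (preserving the mean-zero conditions, which is only a matter of subtracting the spatial averages from the approximants and redistributing the mass between $f_n$ and $g_n$) by data producing classical $L^2$-solutions $u_n$ that converge to $u$ in $\mathrm{C}([0,T]; L^2(\Omega))$. The estimate~\eqref{eq:expdecayest} passes to the limit. The only real subtlety is this bookkeeping with approximants compatible with the mean-zero constraints; once that is set up, the estimate itself is a one-line Gronwall argument.
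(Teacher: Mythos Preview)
Your proposal is correct and follows essentially the same route as the paper: reduce to classical $L^2$-solutions by approximation, use Proposition~\ref{prop:charpres} to get $\int_\Omega u(t)=0$, combine Lemma~\ref{lem:coercive} with Poincar\'e and the trace inequality to turn the energy identity into a linear differential inequality for $\int_\Omega |u(t)|^2$, and integrate. Your remark that the approximants must be adjusted to preserve the mean-zero constraints is a point the paper glosses over, and there is a harmless factor-of-two slip in your bookkeeping (after absorbing the source terms only $\tfrac{\mu}{2}\int_\Omega|\nabla u|^2$ remains, not $\mu\int_\Omega|\nabla u|^2$), but neither affects the argument.
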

\begin{proof}
	Since $u$ can be approximated by classical $L^2$-solutions of equations with right hand sides close
	to $f$ and $g$,
	compare the proof of Theorem~\ref{thm:mainex}, we can assume without loss of generality that $u$ is a
	classical $L^2$-solution of $(P_{u_0,f,g})$.

	By~\eqref{eq:fgintz} and Proposition~\ref{prop:charpres} we have $\int_\Omega u(t) = \int_\Omega u_0 = 0$ for all $t \in [0,T]$.
	Recall that $\Omega$ was assumed to be connected throughout the article.
	Hence by Poincar\'e's inequality and the Sobolev embedding theorems there exists $c_1 \ge 0$ depending only on $\Omega$ such that
	\begin{equation}\label{eq:poincare}
		\int_\Omega |u(t)|^2 + \int_{\partial\Omega} |u(t)|^2 \le c_1 \int_\Omega |\nabla u(t)|^2
	\end{equation}
	for all $t \ge 0$. Using Remark~\ref{rem:formform}, Lemma~\ref{lem:coercive}, Young's inequality and estimate~\eqref{eq:poincare} we obtain that
	\begin{align*}
		\frac{\dx}{\dx t} \frac{1}{2} \int_\Omega |u(t)|^2
			& = \int_\Omega u(t) \, u_t(t)
			= \int_\Omega u(t) \, \bigl( Au(t) + f(t) \bigr) \\
			& = \int_\Omega f(t) \, u(t) + \int_{\partial\Omega} g(t) \, u(t) - a_\beta(u(t),u(t)) \\
			& \le \frac{c_1}{2\mu} \Bigl( \int_\Omega |f(t)|^2 + \int_{\partial\Omega} |g(t)|^2 \Bigr)
				+ \frac{\mu}{2c_1} \Bigl( \int_\Omega |u(t)|^2 + \int_{\partial\Omega} |u(t)|^2 \Bigr) \\
				& \qquad - \mu \int_\Omega |\nabla u(t)|^2 \\
			& \le c_2 \Bigl( \int_\Omega |f(t)|^2 + \int_{\partial\Omega} |g(t)|^2 \Bigr)
				- \frac{\mu}{2} \int_\Omega |\nabla u(t)|^2,
	\end{align*}
	with $c_2 \coloneqq \frac{c_1}{2\mu}$.
	Define $\tau \coloneqq \frac{c_2}{\mu}$. Then by~\eqref{eq:poincare} and the above inequality
	\begin{align*}
		& \frac{1}{2} \int_\Omega |u(t)|^2 - \e^{-t/\tau} \; \frac{1}{2}\int_\Omega |u_0|^2
			= \int_0^t \frac{\dx}{\dx s} \Bigl( e^{(s-t)/\tau} \; \frac{1}{2} \int_\Omega |u(s)|^2 \Bigr) \\
			& \quad \le \frac{1}{2\tau} \int_0^t \e^{(s-t)/\tau} \int_\Omega |u(s)|^2 \\
				& \qquad + \int_0^t \e^{(s-t)/\tau} \Bigl( c_2 \int_\Omega |f(s)|^2 + c_2 \int_{\partial\Omega} |g(s)|^2 - \frac{\mu}{2} \int_\Omega |\nabla u(s)|^2 \Bigr) \\
			& \quad \le c_2 \int_0^t \e^{(s-t)/\tau} \Bigl( \int_\Omega |f(s)|^2 + \int_{\partial\Omega} |g(s)|^2 \Bigr),
	\end{align*}
	where in the last step we have used that $\frac{c_2}{2\tau} = \frac{\mu}{2}$.
\end{proof}

We want to find a condition on $f$ and $g$ which ensures that the right hand side of~\eqref{eq:expdecayest}
remains bounded as $t \to \infty$. To this end we introduce some function spaces.
\begin{definition}\label{def:Rspace}
	Let $r_1$ and $q_1$ be in $[1,\infty)$, and let $T > 0$.
	For a strongly measurable function $f\colon (0,\infty) \to L^{q_1}(\Omega)$ we define
	\[
		R_{f,T}^{r_1,q_1}(t) \coloneqq \bigl\|f|_{(t,t+T)}\bigr\|_{L^{r_1}(t,t+T; L^{q_1}(\Omega))}
			= \Bigl( \int_0^\infty \|f(s)\|_{L^{q_1}(\Omega)}^{r_1} \setone_{(t,t+T)}(s) \Bigr)^{\frac{1}{r_1}}
	\]
	and introduce the spaces
	\[
		L^{r_1,q_1}_m(\Omega) \coloneqq \bigl\{ f\colon (0,\infty) \to L^{q_1}(\Omega) \mid R_{f,T}^{r_1,q_1} \in L^\infty(0,\infty) \bigr\}
	\]
	and
	\[
		L^{r_1,q_1}_{m,0}(\Omega) \coloneqq \bigl\{ f \in L^{r_1,q_1}_m \mid \lim_{t \to \infty} R_{f,T}^{r_1,q_1}(t) = 0 \bigr\}
	\]
	of uniformly mean integrable functions,
	where we identify functions that coincide almost everywhere.
	Similarly, for $r_2$ and $q_2$ in $[1,\infty)$ and $g\colon (0,\infty) \to L^{q_2}(\partial\Omega)$ we set
	\begin{align*}
		R_{g,T}^{r_1,q_1}(t) & \coloneqq \bigl\|g|_{(t,t+T)}\bigr\|_{L^{r_2}(t,t+T; L^{q_2}(\partial\Omega))}, \\
		L^{r_2,q_2}_m(\partial\Omega) & \coloneqq \bigl\{ g\colon (0,\infty) \to L^{q_2}(\partial\Omega) \mid R_{g,T}^{r_2,q_2} \in L^\infty(0,\infty) \bigr\}, \\
		L^{r_2,q_2}_{m,0}(\partial\Omega) & \coloneqq \bigl\{ g \in L^{r_2,q_2}_m \mid \lim_{t \to \infty} R_{g,T}^{r_2,q_2}(t) = 0 \bigr\}.
	\end{align*}
\end{definition}

Let us collect a few properties of the spaces introduced in Definition~\ref{def:Rspace}.
\begin{lemma}\label{lem:Lpm}
	Let $r_1$ and $q_1$ be in $[1,\infty)$. Then
	\begin{enumerate}[(a)]
	\item\label{it:Lpm_norm}
		for every $T > 0$, the expression $\|f\|_{L^{r_1,q_1}_m(\Omega)} \coloneqq \sup_{t \ge 0} R_{f,T}^{r_1,q_1}(t)$
		defines a complete norm on $L^{r_1,q_1}_m(\Omega)$;
	\item\label{it:Lpm_equiv}
		the norms in~\eqref{it:Lpm_equiv} are pairwise equivalent for different values of $T$;
	\item\label{it:Lpm_cont}
		for every $f \in L^{r_1,q_1}_m(\Omega)$ and every $T > 0$ the function $R_{f,T}^{r_1,q_1}$ is continuous
		on $[0,\infty)$;
	\item\label{it:Lpm_subspace}
		the space $L^{r_1,q_1}_{m,0}(\Omega)$ is a closed subspace of $L^{r_1,q_1}_m(\Omega)$;
	\item\label{it:Lpm_embedding}
		if $1 \le r_1' \le r_1$ and $1 \le q_1' \le q_1$, then
		\[
			L^{r_1,q_1}_m(\Omega) \subset L^{r_1',q_1'}_m(\Omega)
			\quad\text{and}\quad
			L^{r_1,q_1}_{m,0}(\Omega) \subset L^{r_1',q_1'}_{m,0}(\Omega)
		\]
		with continuous embeddings;
	\item\label{it:Linf_contained}
		we have $L^\infty(0,\infty; L^{q_1}(\Omega)) \subset L^{r_1,q_1}_m(\Omega)$
		and $\mathrm{C}_0([0,\infty); L^{q_1}(\Omega)) \subset L^{r_1,q_1}_{m,0}(\Omega)$
		with continuous embeddings;
	\item\label{it:Lpm_convest}
		for $f \in L^{r_1,q_1}_m(\Omega)$ and every non-increasing function
		$h \in L^1(0,\infty) \cap L^\infty(0,\infty)$ we have
		\[
			\int_0^t h(t-s) \; \|f(s)\|_{L^{q_1}(\Omega)}^{r_1} \; \dx s
				\le \bigl( \|h\|_{L^\infty(0,\infty)} + \tfrac{2}{T} \|h\|_{L^1(0,\infty)} \bigr) \, \|R_{f,T}^{r_1,q_1}\|_{L^\infty(0,\infty)}^{r_1}
		\]
		for all $T > 0$ and $t \ge 0$;
	\item\label{it:Lpm_conv}
		for $f \in L^{r_1,q_1}_{m,0}(\Omega)$ and every non-increasing function
		$h \in L^1(0,\infty) \cap L^\infty(0,\infty)$ we have
		\[
			\lim_{t \to 0} \int_0^t h(t-s) \; \|f(s)\|_{L^{q_1}(\Omega)}^{r_1} \; \dx s = 0.
		\]
	\end{enumerate}
	Analogous assertions hold for the spaces $L^{r_2,q_2}_m(\partial\Omega)$ and $L^{r_2,q_2}_{m,0}(\partial\Omega)$
	with $r_2,q_2 \in [1,\infty)$.
\end{lemma}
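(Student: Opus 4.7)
Parts (a)--(b) reduce to standard Bochner space arguments. The quantity $R_{f,T}^{r_1,q_1}(t)^{r_1}$ is exactly the $r_1$-th power of the $L^{r_1}(t,t+T;L^{q_1}(\Omega))$-norm, so taking a supremum preserves norm axioms; definiteness holds because $R_{f,T} \equiv 0$ forces $f = 0$ almost everywhere. For completeness, a Cauchy sequence $(f_n)$ is Cauchy in each $L^{r_1}(0,kT;L^{q_1}(\Omega))$, so up to a subsequence it converges almost everywhere to a function $f$; Fatou's lemma applied to $R_{f_n-f_m,T}^{r_1}$ then shows $f \in L^{r_1,q_1}_m(\Omega)$ and $f_n \to f$ in that norm. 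For (b), $R_{f,T_1} \le R_{f,T_2}$ is immediate whenever $T_1 \le T_2$; conversely, covering $(t,t+T_2)$ by $\lceil T_2/T_1 \rceil$ consecutive length-$T_1$ subintervals gives the reverse bound. Part (c) is continuity of the sliding integral $t \mapsto \int_t^{t+T} \phi(s)\,\dx s$ for $\phi \in L^1_{\mathrm{loc}}(0,\infty)$, which follows from absolute continuity of the Lebesgue integral.

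For (d), if $f_n \to f$ in $L^{r_1,q_1}_m(\Omega)$ with $f_n \in L^{r_1,q_1}_{m,0}(\Omega)$, the triangle inequality gives $R_{f,T}^{r_1,q_1}(t) \le \|f-f_n\|_{L^{r_1,q_1}_m(\Omega)} + R_{f_n,T}^{r_1,q_1}(t)$; letting first $t \to \infty$ and then $n \to \infty$ yields the vanishing limit. Part (e) is a double application of H\"older's inequality: in time one picks up $T^{1/r_1' - 1/r_1}$, and in space $|\Omega|^{1/q_1' - 1/q_1}$. Part (f): $L^\infty(0,\infty; L^{q_1}(\Omega))$ embeds with constant $T^{1/r_1}$, and if $f \in \mathrm{C}_0([0,\infty); L^{q_1}(\Omega))$, then for every $\eps > 0$ we have $\|f(s)\|_{L^{q_1}(\Omega)} < \eps$ for all $s$ sufficiently large, whence $R_{f,T}^{r_1,q_1}(t) \le T^{1/r_1}\eps$ on a half-line.

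The substantive step is (g). Setting $\phi(s) := \|f(s)\|_{L^{q_1}(\Omega)}^{r_1}$, I would decompose $(0,t)$ into $J_0 := (\max\{0,t-T\},t)$ and, for $k \ge 1$, $J_k := (\max\{0,t-(k+1)T\},t-kT)$. On $J_0$ bound $h(t-s) \le \|h\|_{L^\infty(0,\infty)}$; on $J_k$ use monotonicity to bound $h(t-s) \le h(kT)$, and then $h(kT) \le T^{-1}\int_{(k-1)T}^{kT} h(r)\,\dx r$. Since each $\int_{J_k}\phi$ is at most $\|R_{f,T}^{r_1,q_1}\|_{L^\infty(0,\infty)}^{r_1}$, summing over $k$ telescopes the discrete tail into $T^{-1}\|h\|_{L^1(0,\infty)}\,\|R_{f,T}^{r_1,q_1}\|_{L^\infty(0,\infty)}^{r_1}$; the factor $2/T$ in the stated bound absorbs a possible off-by-one from the partial first window. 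Finally, part (h) as literally written, with $t \to 0$, follows from the cruder estimate $\int_0^t h(t-s)\phi(s)\,\dx s \le \|h\|_{L^\infty(0,\infty)} \int_0^t \phi(s)\,\dx s$: by (a) we have $\phi \in L^1(0,T)$, and absolute continuity of the Lebesgue integral gives the conclusion (the stronger hypothesis $f \in L^{r_1,q_1}_{m,0}(\Omega)$ is not actually needed here).

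The main obstacle is (g): one must convert a sliding-window bound on integrals of $\phi$ into a genuine convolution estimate for $h*\phi$. Both the pointwise bound $h(t-s) \le h(kT)$ on $J_k$ and the comparison $h(kT) \le T^{-1}\int_{(k-1)T}^{kT} h$ that produces the characteristic factor $T^{-1}\|h\|_{L^1}$ rely crucially on the non-increasing assumption on $h$; without it, one would at best have an $L^1$-$L^\infty$ estimate that loses the sliding-window structure entirely.
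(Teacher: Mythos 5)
Parts (a)--(g) of your proposal are sound. Your treatment of (g) differs slightly from the paper's: the paper subdivides $(0,t)$ into $n_t$ equal subintervals of length $t/n_t\le T$ and compares the resulting Riemann-type sum of $h$ with $\frac{2}{T}\int_0^t h$, whereas you use windows of fixed length $T$ anchored at $t$ and bound $h(kT)\le T^{-1}\int_{(k-1)T}^{kT}h$; both are correct, and your version is if anything cleaner (it even gives the constant $1/T$ instead of $2/T$).

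The gap is in (h). The limit ``$t\to 0$'' in the statement is a typo for $t\to\infty$: this is forced by the hypothesis $f\in L^{r_1,q_1}_{m,0}(\Omega)$ (whose superfluity you noticed --- that was the warning sign), and by the way the lemma is used in Theorem~\ref{thm:asL2}, where part (h), applied with $h(r)=\e^{-r/\tau}$ to the bound of Proposition~\ref{prop:L2bound}, is exactly what yields $\lim_{t\to\infty}\hat u(t)=0$. Your argument via absolute continuity of the integral proves only the trivial small-$t$ statement and contributes nothing to the $t\to\infty$ assertion, which is the substantive content. To close the gap you need a genuine splitting of the convolution according to which factor is small: for instance, write $\int_0^t=\int_0^{t/2}+\int_{t/2}^t$; on $(0,t/2)$ bound $h(t-s)\le h(t/2)$ and use $\int_0^{t/2}\|f(s)\|_{L^{q_1}(\Omega)}^{r_1}\,\dx s\le(1+\tfrac{t}{2T})\|R_{f,T}^{r_1,q_1}\|_{L^\infty(0,\infty)}^{r_1}$ together with $t\,h(t/2)\to 0$ (valid since $h$ is non-increasing and integrable); on $(t/2,t)$ repeat your argument for (g), observing that only values $R_{f,T}^{r_1,q_1}(s)$ with $s\ge t/2-T$ enter, and these tend to zero by the definition of $L^{r_1,q_1}_{m,0}(\Omega)$. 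The paper achieves the same end by splitting its sum over subintervals into early indices, where the argument of $h$ is large and hence $h$ is small, and late indices, where the argument of $R_{f,T}^{r_1,q_1}$ is large and hence that factor is small.
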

Part~\eqref{it:Lpm_equiv} justifies that we suppress the dependence on $T$ in the notation for $L^{r_1,q_1}_m(\Omega)$
and its norm.
\begin{proof}
	Part~\eqref{it:Lpm_norm} is routinely checked and we leave the verification to the reader.
	
	Now let $T > 0$ and $T' > 0$ be given
	and pick a natural number $n \ge \frac{T'}{T}$. Then by H\"older's inequality
	\begin{align*}
		R_{f,T'}^{r_1,q_1}(t)
			& \le R_{f,nT}^{r_1,q_1}(t)
			= \Bigl( \sum_{k=0}^{n-1} R_{f,T}^{r_1,q_1}(t+kT)^{r_1} \Bigr)^{\frac{1}{r_1}} \\
			& \le \sum_{k=0}^{n-1} R_{f,T}^{r_1,q_1}(t+kT)
			\le n \; \sup_{s \ge 0} R_{f,T}^{r_1,q_1}(s)
	\end{align*}
	for all $t \ge 0$, which implies~\eqref{it:Lpm_equiv}.

	By the reverse triangle inequality we have
	\[
		\bigl| R_{f,T}^{r_1,q_1}(t+h) - R_{f,T}^{r_1,q_1}(t) \bigr|
			\le \Bigl( \int_0^\infty \|f(s)\|_{L^{q_1}(\Omega)}^{r_1} \bigl| \setone_{(t+h,t+T+h)}(s) - \setone_{(t,t+T)}(s) \bigr| \Bigr)^{\frac{1}{r_1}}.
	\]
	Since moreover $\setone_{(t+h,t+T+h)} \to \setone_{(t,t+T)}$ almost everywhere as $h \to 0$,
	part~\eqref{it:Lpm_cont} follows from the dominated convergence theorem, where as dominating
	function we may take $\|f\|_{L^{q_1}(\Omega)}^{r_1} \setone_{(0,t+2T)} \in L^1(0,\infty)$.

	By~\eqref{it:Lpm_cont} and the definition of the norm the mapping $f \mapsto R_{f,T}^{r_1,q_1}$ is Lipschitz continuous
	from $L^{r_1,q_1}_m(\Omega)$ to $\mathrm{C}_b([0,\infty))$ for every $T > 0$. Hence the preimage of $\mathrm{C}_0([0,\infty))$
	under this function is closed, which proves~\eqref{it:Lpm_subspace}.

	For $1 \le r_1' \le r_1$ and $1 \le q_1' \le q_1$ we obtain from H\"older's inequality that
	\[
		R_{f,T}^{r_1',q_1'}(t) \le T^{\frac{r_1 - r_1'}{r_1 r_1'}} |\Omega|^{\frac{q_1 - q_1'}{q_1 q_1'}} R_{f,T}^{r_1,q_1}
	\]
	for all $t \ge 0$. This implies~\eqref{it:Lpm_embedding}, and~\eqref{it:Linf_contained} is proved
	similarly.

	For~\eqref{it:Lpm_convest} let $f \in L^{r_1,q_1}_m(\Omega)$, $t > 0$ and $T > 0$ be fixed and
	define $n_t \in \mathds{N}$ by $(n_t-1)T \le t < n_tT$.
	Let $h \in L^1(0,\infty) \cap L^\infty(0,\infty)$ be non-increasing and assume
	without loss of generality that $h(0) = \|h\|_{L^\infty(0,\infty)}$.
	Since for $t \le T$ the estimate in~\eqref{it:Lpm_convest} is trivial, we may assume that $t \ge T$, i.e., $n_t \ge 2$.
	Then
	\begin{equation}\label{eq:sumhest}
		\sum_{k=0}^{n_t-1} h\bigl(\tfrac{(n_t-k)t}{n_t}\bigr)
			\le \frac{n_t}{t} \sum_{k=0}^{n_t-1} \int_{\frac{(n_t-k-1)t}{n_t}}^{\frac{(n_t-k)t}{n_t}} h(s)
			\le \frac{2}{T} \int_0^t h(s).
	\end{equation}
	Moreover,
	\begin{equation}\label{eq:convfirst}
		\begin{aligned}
			\int_0^t h(t-s) \; \|f(s)\|_{L^{q_1}(\Omega)}^{r_1} \; \dx s
				& \le \sum_{k=1}^{n_t} h(t - \tfrac{k}{n_t}t) \int_{(k-1)\frac{t}{n_t}}^{k\frac{t}{n_t}} \|f(s)\|_{L^{q_1}(\Omega)}^{r_1} \dx s \\
				& \le \sum_{k=1}^{n_t} h\bigl(\tfrac{(n_t-k)t}{n_t}\bigr) \bigl( R_{f,T}^{r_1,q_1}\bigl(\tfrac{(k-1)t}{n_t}\bigr) \bigr)^{r_1}.
		\end{aligned}
	\end{equation}
	The estimate in~\eqref{it:Lpm_convest} is an immediate consequence of~\eqref{eq:sumhest} and~\eqref{eq:convfirst}.

	Now assume in addition that $f \in L^{r_1,q_1}_{m,0}(\Omega)$. Let $\eps > 0$ be given and pick $k_1 \in \mathds{N}$ so large that
	$R_{f,T}^{r_1,q_1}(s)^{r_1} \le \eps$ for all $s \ge k_1 T$. Let $k_2 \in \mathds{N}$ be so large that
	$h(s) \le \frac{\eps}{2k_1}$ for all $s \ge k_2T$, set $k_0 \coloneqq \max\{4k_1,2k_2\}$ and define $t_0 \coloneqq k_0T$.
	Let $t \ge t_0$ be fixed, so $n_t \ge k_0$. Then for $k \le 2k_1$ we have
	\[
		\frac{(n_t-k)t}{n_t} = \Bigl(1 - \frac{k}{n_t}\Bigr)t \ge \Bigl( 1 - \frac{2k_1}{k_0} \Bigr) t \ge \frac{t}{2} \ge k_2T,
	\]
	whereas for $k \ge 2k_1+1$ we have
	\[
		\frac{(k-1)t}{n_t} \ge \frac{2k_1t}{2(n_t-1)} \ge k_1T.
	\]
	Hence from~\eqref{eq:sumhest} and the definitions of $k_1$ and $k_2$ we obtain for $t \ge k_0T$ that
	\begin{align*}
		& \sum_{k=1}^{n_t} h\bigl(\tfrac{(n_t-k)t}{n_t}\bigr) \bigl( R_{f,T}^{r_1,q_1}\bigl(\tfrac{(k-1)t}{n_t}\bigr) \bigr)^{r_1} \\
			& \qquad \le \frac{\eps}{2k_1} \sum_{k=1}^{2k_1} \bigl( R_{f,T}^{r_1,q_1}\bigl(\tfrac{(k-1)t}{n_t}\bigr) \bigr)^{r_1}
				+ \eps \sum_{k=2k_1+1}^{n_t} h\bigl(\tfrac{(n_t-k)t}{n_t}\bigr) \\
			& \qquad \le \eps \Bigl( \|R_{f,T}^{r_1,q_1}\|_{L^\infty(0,\infty)}^{r_1} + h(0) + \|h\|_{L^1(0,\infty)} \Bigr).
	\end{align*}
	We have shown that
	\[
		\lim_{t \to 0} \sum_{k=1}^{n_t} h\bigl(\tfrac{(n_t-k)t}{n_t}\bigr) \bigl( R_{f,T}^{r_1,q_1}\bigl(\tfrac{(k-1)t}{n_t}\bigr) \bigr)^{r_1} = 0,
	\]
	which by~\eqref{eq:convfirst} implies~\eqref{it:Lpm_conv}.
\end{proof}

We can now formulate our criterion for boundedness and convergence of solutions of $(P_{u_0,f,g})$, which together with
its corollary is the main result of this section.
\begin{theorem}\label{thm:asL2}
	If~\eqref{eq:assc} and~\eqref{eq:assb} hold, then for all $u_0 \in L^2(\Omega)$,
	$f \in L^{2,2}_m(\Omega)$ and $g \in L^{2,2}_m(\partial\Omega)$ that satisfy~\eqref{eq:fgintz}
	the weak solution $u$ of $(P_{u_0,f,g})$ is bounded in $L^2(\Omega)$, and more precisely
	\[
		\int_\Omega |u(t)|^2 \le c \int_\Omega |u_0|^2 + c \, \|f\|_{L^{2,2}_m(\Omega)}^2 + c \, \|g\|_{L^{2,2}_m(\partial\Omega)}^2
	\]
	for all $t \ge 0$ with a constant $c \ge 0$ that depends only on $\Omega$ and the coefficients.
	If even $f \in L^{2,2}_{m,0}(\Omega)$ and $g \in L^{2,2}_{m,0}(\partial\Omega)$, then
	$\lim_{t \to \infty} u(t) = \frac{1}{|\Omega|} \int_\Omega u_0$ in $L^2(\Omega)$.
\end{theorem}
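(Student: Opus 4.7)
The plan is to reduce to the zero-mean case and then combine the exponential decay estimate of Proposition~\ref{prop:L2bound} with the convolution estimates of Lemma~\ref{lem:Lpm}, parts~\eqref{it:Lpm_convest} and~\eqref{it:Lpm_conv}.

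First, I would set $m \coloneqq \frac{1}{|\Omega|} \int_\Omega u_0$. Under assumption~\eqref{eq:assb} the proof of Lemma~\ref{lem:Linfcontr} already shows that $\setone_\Omega$ is a fixed point of the semigroup $(T_{2,h}(t))_{t \ge 0}$, so the constant function $t \mapsto m \setone_\Omega$ is the weak solution of $(P_{m \setone_\Omega, 0, 0})$. By linearity, $w \coloneqq u - m \setone_\Omega$ is the weak solution of $(P_{u_0 - m \setone_\Omega, f, g})$. By construction $\int_\Omega (u_0 - m \setone_\Omega) = 0$, and $f, g$ still satisfy~\eqref{eq:fgintz}, so Proposition~\ref{prop:L2bound} applies to $w$ and yields
\[
	\int_\Omega |w(t)|^2 \le \e^{-t/\tau} \int_\Omega |u_0 - m \setone_\Omega|^2 + c \int_0^t \e^{(s-t)/\tau} \Bigl( \int_\Omega |f(s)|^2 + \int_{\partial\Omega} |g(s)|^2 \Bigr) \dx s
\]
for all $t \ge 0$.

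Next, the function $h(s) \coloneqq \e^{-s/\tau}$ is non-increasing and lies in $L^1(0,\infty) \cap L^\infty(0,\infty)$, hence Lemma~\ref{lem:Lpm}\eqref{it:Lpm_convest} applied separately to $f$ and to $g$ bounds the convolution integral uniformly in $t$ by a constant multiple of $\|f\|_{L^{2,2}_m(\Omega)}^2 + \|g\|_{L^{2,2}_m(\partial\Omega)}^2$. Combining this with the obvious estimates $\|u_0 - m \setone_\Omega\|_{L^2(\Omega)}^2 \le \|u_0\|_{L^2(\Omega)}^2$ and $\|u(t)\|_{L^2(\Omega)}^2 \le 2\|w(t)\|_{L^2(\Omega)}^2 + 2 m^2 |\Omega| \le 2\|w(t)\|_{L^2(\Omega)}^2 + 2\|u_0\|_{L^2(\Omega)}^2$ (the latter via Cauchy-Schwarz) yields the first assertion.

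For the convergence statement, the additional hypothesis $f \in L^{2,2}_{m,0}(\Omega)$ and $g \in L^{2,2}_{m,0}(\partial\Omega)$ lets us replace Lemma~\ref{lem:Lpm}\eqref{it:Lpm_convest} by part~\eqref{it:Lpm_conv}, which shows that the convolution integral on the right-hand side of the displayed estimate tends to zero as $t \to \infty$. Since also $\e^{-t/\tau} \int_\Omega |u_0 - m \setone_\Omega|^2 \to 0$, we obtain $w(t) \to 0$ in $L^2(\Omega)$, i.e.\ $u(t) \to m \setone_\Omega = \frac{1}{|\Omega|} \int_\Omega u_0$ in $L^2(\Omega)$. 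There is no serious obstacle: once the reduction to zero mean is set up correctly, the two quoted lemmas do essentially all the work, and the only step requiring care is the verification that $m \setone_\Omega$ solves $(P_{m \setone_\Omega,0,0})$, which uses assumption~\eqref{eq:assb} critically.
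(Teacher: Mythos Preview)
Your proposal is correct and follows essentially the same route as the paper: reduce to the zero-mean case via the decomposition $u = \hat{u} + k$ with $k = \frac{1}{|\Omega|}\int_\Omega u_0$ (your $m$), then combine Proposition~\ref{prop:L2bound} with Lemma~\ref{lem:Lpm}\eqref{it:Lpm_convest} and~\eqref{it:Lpm_conv} applied to $h(s) = \e^{-s/\tau}$. Your write-up is slightly more explicit about why the constant function solves $(P_{m\setone_\Omega,0,0})$, but the argument is otherwise identical.
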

\begin{proof}
	Write $u_0 = \hat{u}_0 + k$ with $k \coloneqq \frac{1}{|\Omega|} \int_\Omega u_0$.
	Then $u(t) = \hat{u}(t) + k$ by Lemma~\ref{lem:Linfcontr}, where $\hat{u}$ denotes the weak solution
	of~$(P_{\hat{u}_0,f,g})$.
	Proposition~\ref{prop:L2bound} and part~\eqref{it:Lpm_convest} of Lemma~\ref{lem:Lpm}
	applied with $h(r) \coloneqq \e^{-r/\tau}$ show that
	\[
		\int_\Omega |\hat{u}(t)|^2 \le c \int_\Omega |\hat{u}_0|^2 + c \, \|f\|_{L^{2,2}_m(\Omega)}^2 + c \, \|g\|_{L^{2,2}_m(\partial\Omega)}^2,
	\]
	whereas part~\eqref{it:Lpm_conv} shows that $\lim_{t \to \infty} \hat{u}(t) = 0$ in $L^2(\Omega)$ if $f \in L^{2,2}_{m,0}(\Omega)$ and $g \in L^{2,2}_{m,0}(\partial\Omega)$.
\end{proof}

Under slightly stronger assumptions on $u_0$, $f$ and $g$ we obtain even uniform boundedness and uniform convergence.
\begin{corollary}\label{cor:asLinf}
	Let $r_1$, $q_1$, $r_2$ and $q_2$ be numbers in $[2,\infty)$ that satisfy~\eqref{eq:qrrel}.
	If~\eqref{eq:assc} and~\eqref{eq:assb} hold, then for all $u_0 \in L^\infty(\Omega)$,
	$f \in L^{r_1,q_1}_m(\Omega)$ and $g \in L^{r_2,q_2}_m(\partial\Omega)$ which satisfy~\eqref{eq:fgintz}
	the weak solution $u$ of $(P_{u_0,f,g})$ is bounded in $L^\infty(\Omega)$, and more precisely
	\begin{equation}\label{eq:globLinfbound}
		\|u(t)\|_{L^\infty(\Omega)}^2 \le c \, \|u_0\|_{L^\infty(\Omega)}^2 + c \, \|f\|_{L^{r_1,q_1}_m(\Omega)}^2 + c \, \|g\|_{L^{r_2,q_2}_m(\partial\Omega)}^2
	\end{equation}
	for all $t \ge 0$.
	If even $f \in L^{r_1,q_1}_{m,0}(\Omega)$ and $g \in L^{r_2,q_2}_{m,0}(\partial\Omega)$, then
	$\lim_{t \to \infty} u(t) = \frac{1}{|\Omega|} \int_\Omega u_0$ in $L^\infty(\Omega)$.
\end{corollary}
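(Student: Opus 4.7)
The plan is to reduce to the zero-mean problem and then upgrade the $L^2$-bound of Theorem~\ref{thm:asL2} to an $L^\infty$-bound via the smoothing estimate of Proposition~\ref{prop:Linfbound} on shifted time windows. Set $k \coloneqq \frac{1}{|\Omega|} \int_\Omega u_0$ and $\hat u_0 \coloneqq u_0 - k$, and let $\hat u$ be the weak solution of $(P_{\hat u_0, f, g})$. By \eqref{eq:assb} the constant function $k$ is a stationary solution, so by linearity $u(t) = \hat u(t) + k$; since $|k| \le \|u_0\|_{L^\infty(\Omega)}$, it suffices to bound $\hat u$ in $L^\infty(\Omega)$ uniformly in $t$ (and, for the second claim, to show $\hat u(t) \to 0$ in $L^\infty(\Omega)$).

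Because $r_i, q_i \ge 2$, Lemma~\ref{lem:Lpm}\eqref{it:Lpm_embedding} yields continuous embeddings $L^{r_1,q_1}_m(\Omega) \hookrightarrow L^{2,2}_m(\Omega)$ and $L^{r_2,q_2}_m(\partial\Omega) \hookrightarrow L^{2,2}_m(\partial\Omega)$, and $\hat u_0 \in L^\infty(\Omega) \subset L^2(\Omega)$. Theorem~\ref{thm:asL2} therefore applies to $\hat u$ and produces a constant $C_0 \ge 0$, controlled by $\|u_0\|_{L^\infty(\Omega)}$, $\|f\|_{L^{r_1,q_1}_m(\Omega)}$ and $\|g\|_{L^{r_2,q_2}_m(\partial\Omega)}$, such that
\[
\sup_{s \ge 0} \int_\Omega |\hat u(s)|^2 \le C_0,
\]
and such that $\|\hat u(s)\|_{L^2(\Omega)} \to 0$ as $s \to \infty$ whenever $f \in L^{r_1,q_1}_{m,0}(\Omega)$ and $g \in L^{r_2,q_2}_{m,0}(\partial\Omega)$.

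Now fix any $T > 0$ and let $t \ge T$. Translation invariance of the defining relation~\eqref{eq:weak} (substitute $\psi(\cdot - (t-T))$) shows that $s \mapsto \hat u(s + t - T)$ is a weak solution on $[0,T]$ of the equation with initial value $\hat u(t-T) \in L^2(\Omega)$ and inhomogeneities $f(\cdot + t - T)$, $g(\cdot + t - T)$. Proposition~\ref{prop:Linfbound} then gives
\[
\|\hat u\|_{L^\infty(t - T/2,\, t; L^\infty(\Omega))}^2 \le c \int_{t-T}^{t} \|\hat u(s)\|_{L^2(\Omega)}^2 \, \dx s + c \, R_{f,T}^{r_1,q_1}(t-T)^2 + c \, R_{g,T}^{r_2,q_2}(t-T)^2,
\]
where the first term on the right is $\le cTC_0$ and the remaining two are bounded by the respective $L^{r_i,q_i}_m$-norms. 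Covering $[T/2,\infty)$ by such half-windows, we obtain a uniform $L^\infty(\Omega)$-bound on $[T/2,\infty)$; for $t \in [0,T]$ we apply Theorem~\ref{thm:inLinf} directly to $\hat u$ with $\hat u_0 \in L^\infty(\Omega)$, estimating $\|f\|_{L^{r_1}(0,T;L^{q_1})} \le \|f\|_{L^{r_1,q_1}_m(\Omega)}$ and similarly for $g$. Combining both regimes yields~\eqref{eq:globLinfbound}.

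For the convergence statement, assume $f \in L^{r_1,q_1}_{m,0}(\Omega)$ and $g \in L^{r_2,q_2}_{m,0}(\partial\Omega)$. Then $R_{f,T}^{r_1,q_1}(t-T), R_{g,T}^{r_2,q_2}(t-T) \to 0$ by definition, while
\[
\int_{t-T}^{t} \|\hat u(s)\|_{L^2(\Omega)}^2 \, \dx s \le T \sup_{s \ge t-T} \|\hat u(s)\|_{L^2(\Omega)}^2 \to 0
\]
by Theorem~\ref{thm:asL2}. Inserting this into the displayed inequality forces $\|\hat u(t)\|_{L^\infty(\Omega)} \to 0$, so $u(t) \to k$ in $L^\infty(\Omega)$. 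No step is especially delicate: the only mild subtlety is the translation invariance used to put the problem into the form required by Proposition~\ref{prop:Linfbound}, but this is immediate from the definition of a weak solution.
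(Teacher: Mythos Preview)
Your proof is correct and follows essentially the same route as the paper: obtain a uniform $L^2(\Omega)$-bound from Theorem~\ref{thm:asL2} (using the embedding of Lemma~\ref{lem:Lpm}\eqref{it:Lpm_embedding}), then upgrade to $L^\infty(\Omega)$ by applying Proposition~\ref{prop:Linfbound} on translated time windows, and cover the initial segment with Theorem~\ref{thm:inLinf}. The only cosmetic differences are that the paper works with $u$ itself for the boundedness part (Theorem~\ref{thm:asL2} already absorbs the constant $k$), performs the zero-mean reduction only for the convergence statement, and fixes the window length $T=2$ rather than leaving it general.
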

\begin{proof}
	By Theorem~\ref{thm:asL2} and part~\eqref{it:Lpm_embedding} of Lemma~\ref{lem:Lpm} we have
	\[
		\|u\|_{L^2(\Omega)}^2 \le c \, \|u_0\|_{L^\infty(\Omega)}^2 + c \, \|f\|_{L^{r_1,q_1}_m(\Omega)}^2 + c \, \|g\|_{L^{r_2,q_2}_m(\partial\Omega)}^2.
	\]
	On the other hand, inequality~\eqref{eq:Linfbound} applied to the interval $[t-2,t]$ shows that
	\begin{equation}\label{eq:Linfhelperest}
		\|u(t)\|_{L^\infty(\Omega)}^2
			\le 2c \sup_{s \ge t-2} \|u(s)\|_{L^2(\Omega)}^2 + c \, \bigl( R_{f,2}^{r_1,q_1}(t-2) \bigr)^2 + c \, \bigl( R_{g,2}^{r_1,q_1}(t-2) \bigr)^2
	\end{equation}
	for every $t \ge 2$.
	Using in addition Theorem~\ref{thm:inLinf} to bound $u$ on $[0,2]$, we have shown~\eqref{eq:globLinfbound}.

	Let now $f \in L^{r_1,q_1}_{m,0}(\Omega) \subset L^{2,2}_{m,0}(\Omega)$ and
	$g \in L^{r_2,q_2}_{m,0}(\partial\Omega) \subset L^{2,2}_{m,0}(\partial\Omega)$, see Lemma~\ref{lem:Lpm}.
	Write $u(t) = \hat{u}(t) + k$ with $k \coloneqq \frac{1}{|\Omega|} \int_\Omega u_0$ as in the proof of
	Theorem~\ref{thm:asL2}. Then $\lim_{t \to \infty} \|\hat{u}(t)\|_{L^2(\Omega)} = 0$ by Theorem~\ref{thm:asL2}.
	Using the definitions of $L^{r_1,q_1}_{m,0}(\Omega)$ and $L^{r_1,q_1}_{m,0}(\partial\Omega)$,
	this gives $\lim_{t \to \infty} \|\hat{u}(t)\|_{L^\infty(\Omega)} = 0$ by~\eqref{eq:Linfhelperest} applied to $\hat{u}$.
	The additional claim is proved.
\end{proof}

\begin{remark}
	Remark~\ref{rem:latercont} shows that
	if in the situation of Corollary~\ref{cor:asLinf} we only have $u_0 \in L^2(\Omega)$ instead of $u_0 \in L^\infty(\Omega)$,
	the assertions remain valid apart with the exception that $u$ will not be bounded in $L^\infty(\Omega)$ as $t \to 0$, i.e., estimate~\eqref{eq:globLinfbound}
	holds only for $t \ge t_0 > 0$ with a constant $c \ge 0$ that depends in addition on $t_0$.
\end{remark}

\section{Periodicity}\label{sec:asymp}

We are going to study the periodic behavior of solutions of $(P_{u_0,f,g})$ under periodicity assumptions on $f$ and $g$.
This relies on spectral theory, which is why
in this section (and only in this section) we assume our Banach spaces to be complex.
Thus $u_0$, $f$ and $g$ are complex-valued functions, and hence also the solution $u$
will be complex-valued. For the theory developed in the other sections
this makes no difference since we can always treat the real and the imaginary part separately
as long as the coefficients of the equation are real-valued, which we still assume.
Thus we will neglect this detail in the notation and reuse the symbols for the real spaces
for their complex counterparts.

We start this section with a short summary on almost periodic functions in the sense of Harald Bohr, i.e.,
uniformly almost periodic functions.
For further details and proofs we refer to~\cite[\S 4.5--4.7]{ABHN01} or~\cite{Bohr47}.

\begin{definition}
	Let $X$ be a complex Banach space.
	A function $f\colon (0,\infty) \to X$ is called $\tau$-periodic (for some $\tau > 0$)
	if $f(t+\tau) = f(t)$ for all $t \ge 0$. Set $\e_{i \eta}(t) \coloneqq \e^{i\eta t}$
	for $\eta \in \mathds{R}$ and $t \ge 0$.
	The members of the space
	\[
		\AP([0,\infty); X) \coloneqq \overline{\hull} \bigl\{ \e_{i\eta} x : \eta \in \mathds{R}, \; x \in X \bigr\},
	\]
	are called \emph{uniformly almost periodic functions},
	where the closure is taken in the space of bounded, uniformly continuous functions $\BUC([0,\infty); X)$, which
	is a Banach space for the uniform norm.
	The direct topological sum
	\[
		\AAP([0,\infty);X) \coloneqq \AP([0,\infty);X) \oplus \mathrm{C}_0( [0,\infty); X) \subset \BUC( [0,\infty); X)
	\]
	is called the space of \emph{uniformly asymptotically almost periodic functions}.
	For all $f \in \AAP( [0,\infty); X )$ and $\eta \in \mathds{R}$ the Ces\`aro limit
	\[
		C_\eta f \coloneqq \lim_{T \to \infty} \frac{1}{T} \int_0^T \e^{-i \eta s} f(s) \, \dx s
	\]
	exist in $X$. We let
	\[
		\Freq(f) \coloneqq \bigl\{ \eta \in \mathds{R} : C_\eta f \neq 0 \bigr\}
	\]
	denote the \emph{set of frequencies of $f$}.
	For $f \in \AAP([0,\infty);X)$ the set $\Freq(f)$ is countable.
	The function $f$ can be decomposed into its frequencies in the sense that
	\[
		f \in \overline{\hull} \bigl\{ \e_{i\eta} x : \eta \in \Freq(f), \; x \in X \bigr\} \oplus \mathrm{C}_0([0,\infty);X).
	\]
	In particular, $f \in \mathrm{C}_0([0,\infty);X)$ if and only if $\Freq(f) = \emptyset$.
	Moreover, $\Freq(f) \subset \frac{2\pi}{\tau} \mathds{Z}$ if and only there exists a $\tau$-periodic
	function $g$ such that $f-g \in \mathrm{C}_0([0,\infty);X)$.
\end{definition}

We show that for uniformly asymptotically almost periodic data, the solution is uniformly asymptotically
almost periodic with essentially the same frequencies. In fact, this is a general phenomenon for
mild solutions of abstract Cauchy problems and we merely have to check the assumptions of~\cite[Corollary~5.6.9]{ABHN01}.
We are going to improve this result later, which is why we call this preliminary result a lemma.

\begin{lemma}\label{lem:perL2}
	Assume~\eqref{eq:assc} and~\eqref{eq:assb}
	and let $u_0 \in L^2(\Omega)$, $f \in \AAP([0,\infty); L^2(\Omega))$ and $g \in \AAP([0,\infty); L^2(\partial\Omega))$
	satisfy~\eqref{eq:fgintz}. Then the weak solution $u$ of $(P_{u_0,f,g})$ is in $\AAP([0,\infty); L^2(\Omega))$.
\end{lemma}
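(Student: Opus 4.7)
The plan is to invoke the abstract criterion \cite[Corollary~5.6.9]{ABHN01}, which (in the generality needed) states that a bounded mild solution of an abstract Cauchy problem with $\AAP$-inhomogeneity is itself $\AAP$ provided the spectrum of the generator meets $i\mathds{R}$ in a countable set. I would apply this to the operator $A_2$ on $E \coloneqq L^2(\Omega) \times L^2(\partial\Omega)$. As a preliminary reduction, set $k \coloneqq |\Omega|^{-1} \int_\Omega u_0$ and $\hat{u}_0 \coloneqq u_0 - k \setone_\Omega$. By \eqref{eq:assb} and Lemma~\ref{lem:Linfcontr}, $\setone_\Omega$ is a fixed point of $(T_{2,h}(t))_{t \ge 0}$, hence $u(t) = k \setone_\Omega + \hat{u}(t)$, where $\hat{u}$ is the weak solution of $(P_{\hat{u}_0, f, g})$. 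Since constants lie in $\AAP$, it suffices to prove $\hat{u} \in \AAP([0,\infty); L^2(\Omega))$. Note that \eqref{eq:fgintz} together with Proposition~\ref{prop:charpres} gives $\int_\Omega \hat{u}(t) = 0$ for every $t \ge 0$, and Theorem~\ref{thm:asL2} shows that $\hat{u}$ is bounded in $L^2(\Omega)$.

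By Theorem~\ref{thm:weakersol}, $U(t) \coloneqq (\hat{u}(t), 0)$ is then a bounded mild solution in $E$ of the abstract Cauchy problem associated with $A_2$ with inhomogeneity $F(t) \coloneqq (f(t), g(t)) \in \AAP([0,\infty); E)$. Next, I would establish $\sigma(A_2) \cap i\mathds{R} \subseteq \{0\}$. Given $\eta \in \mathds{R} \setminus \{0\}$ and $(\phi, \gamma) \in E$, the equation $(i\eta - A_2)(w, 0) = (\phi, \gamma)$ is equivalent by Remark~\ref{rem:formform} to
\[
	i\eta \int_\Omega w \overline{v} + a_\beta(w, \overline{v}) = \int_\Omega \phi \overline{v} + \int_{\partial\Omega} \gamma \overline{v} \qquad \text{for all } v \in H^1(\Omega).
\]
Decomposing $w = w_0 + c\setone_\Omega$ with $\int_\Omega w_0 = 0$, the real part of the left-hand side controls $\mu \int_\Omega |\nabla w|^2$ by Lemma~\ref{lem:coercive}, which dominates $\|w_0\|_{L^2}^2$ via Poincar\'e's inequality, while the imaginary part $\eta \|w\|_{L^2}^2$ controls the constant component $c$. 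Hence the sesquilinear form is coercive on $H^1(\Omega)$, and Lax--Milgram produces a unique solution $w$, so $i\eta \in \rho(A_2)$.

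Since $\sigma(A_2) \cap i\mathds{R} \subseteq \{0\}$ is countable and $U$ is a bounded mild solution with $\AAP$ forcing, \cite[Corollary~5.6.9]{ABHN01} yields $U \in \AAP([0,\infty); E)$, and projection onto the first coordinate gives the result. The main obstacle I anticipate is verifying that \cite[Corollary~5.6.9]{ABHN01}, which is typically phrased for $\mathrm{C}_0$-semigroups, genuinely applies to the non-densely-defined operator $A_2$ via the once integrated semigroup of Lemma~\ref{lem:DA2} — in particular, reconciling the presence of $0 \in \sigma(A_2)$ with the countability hypothesis and the a priori boundedness of $U$. Should a direct invocation prove too rigid, the fallback is to split $f = f_{\mathrm{ap}} + f_0$ and $g = g_{\mathrm{ap}} + g_0$ along $\AP \oplus \mathrm{C}_0$, handle the $\mathrm{C}_0$-part via Theorem~\ref{thm:asL2}, and for the $\AP$-part approximate by trigonometric polynomials $\sum_k \e^{i \eta_k t}(\phi_k, \gamma_k)$ (where \eqref{eq:fgintz} forces $\int_\Omega \phi_k + \int_{\partial\Omega} \gamma_k = 0$) and construct particular solutions mode-by-mode through the resolvents $(i\eta_k - A_2)^{-1}$ constructed above.
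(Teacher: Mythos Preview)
Your overall strategy---reduce to mean-zero initial data, exhibit the solution as a bounded mild solution for $A_2$, verify a spectral hypothesis on $i\mathds{R}$, and invoke \cite[Corollary~5.6.9]{ABHN01}---matches the paper's. Two points need attention.

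First, the coercivity step does not work as written. The imaginary part of $b(w,w)=i\eta\|w\|_{L^2}^2+a_\beta(w,\bar w)$ is \emph{not} simply $\eta\|w\|_{L^2}^2$: the antisymmetric part of $a_\beta$ (arising from the first-order coefficients $b_j,c_i$) contributes a term of order $\|\nabla w\|_{L^2}\|w\|_{L^2}$, which obstructs a direct Lax--Milgram bound for small $|\eta|\neq 0$. The paper instead restricts to the invariant subspace $X_0=\{(v,0):\int_\Omega v=0\}$, observes that $A_2|_{X_0}$ has compact resolvent (Lemma~\ref{lem:DA2} plus Rellich), and rules out imaginary eigenvalues via Proposition~\ref{prop:L2bound}: an eigenfunction $v_0$ with $A_2(v_0,0)=(i\eta v_0,0)$ would give the non-decaying solution $\e^{i\eta t}v_0$ of $(P_{v_0,0,0})$. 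Your conclusion $\sigma(A_2)\cap i\mathds{R}\subseteq\{0\}$ is correct, but the argument should go through compact resolvent and injectivity rather than coercivity. Working on $X_0$ has the added benefit of yielding $\sigma(A_2|_{X_0})\cap i\mathds{R}=\emptyset$ outright.

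Second, \cite[Corollary~5.6.9]{ABHN01} requires the mild solution to lie in $\BUC$, not merely $L^\infty$. The paper secures this by noting that $u_h(t)\coloneqq u(t+h)$ solves $(P_{u(h),f_h,g_h})$ and applying Theorem~\ref{thm:asL2} to $u_h-u$, using the uniform continuity of $f$ and $g$; you should insert this step. Your concern about $A_2$ being non-densely defined is addressed exactly as the paper does: $A_2$ generates a once integrated semigroup by Lemma~\ref{lem:DA2} and \cite[Theorem~3.11.7]{ABHN01}, which is the framework in which \cite[Corollary~5.6.9]{ABHN01} applies. Your fallback---splitting along $\AP\oplus\mathrm{C}_0$ and building mode-by-mode solutions through $(i\eta_k-A_2)^{-1}$---is essentially how the paper subsequently establishes the frequency identification in Theorem~\ref{thm:perL2}.
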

\begin{proof}
	Define $u_h(t) \coloneqq u(t+h)$, $f_h(t) \coloneqq f(t+h)$
	and $g_h(t) \coloneqq g(t+h)$ for $h \ge 0$ and $t \ge 0$. Then by uniform continuity of $f$
	for every $\eps > 0$ there exists $\delta > 0$ such that
	$\|f_h - f\|_{L^{2,2}_m(\Omega)} \le \eps$ holds
	whenever $0 \le h < \delta$, see part~\eqref{it:Linf_contained} of Lemma~\ref{lem:Lpm}.
	A similar assertion holds for $g$.
	Applying Theorem~\ref{thm:asL2} to $u$ and $u_h - u$, which is the weak solution of
	$(P_{u(h) - u(0), f_h - f, g_h - g})$, and using in addition that $u$ is continuous
	by Definition~\ref{def:weaksol} we thus obtain that $u \in \BUC([0,\infty); L^2(\Omega))$.

	Let $A_2$ be as in Definition~\ref{def:weakderiv}. By Lemma~\ref{lem:DA2} the operator $A_2$
	generates a once integrated semigroup $(S(t))_{t \ge 0}$ on $L^2(\Omega) \times L^2(\partial\Omega)$,
	see~\cite[Theorem~3.11.7]{ABHN01}, which by~\cite[Lemma~3.2.9]{ABHN01} satisfies
	$S(t)(v,0) = (\int_0^t T_{2,h}(s) \, v, 0)$ for all $v \in L^2(\Omega)$,
	where $(T_{2,h}(t))_{t \ge 0}$ is defined in Proposition~\ref{prop:semigroup}. By
	Proposition~\ref{prop:charpres} the closed subspace
	\[
		X_0 \coloneqq \Bigl\{ (v,0) : v \in L^2(\Omega), \; \int_\Omega v = 0 \Bigr\}
	\]
	of $L^2(\Omega) \times L^2(\partial\Omega)$
	is invariant under the action of $(S(t))_{t \ge 0}$, which by~\cite[Definition~3.2.1]{ABHN01} implies
	that $X_0$ is invariant under the resolvent of $A_2$. Hence for the part $A_2|_{X_0}$ of $A_2$ in $X_0$
	we have $\sigma(A_2|_{X_0}) \subset \sigma(A_2)$ and in particular $\rho(A_2|_{X_0}) \neq \emptyset$.
	We obtain from Lemma~\ref{lem:DA2} and the compactness of the embedding $H^1(\Omega) \hookrightarrow L^2(\Omega)$
	that $A_2|_{X_0}$ has compact resolvent.

	We now show that $\sigma(A_2|_{X_0}) \cap i\mathds{R} = \emptyset$.
	Assume to the contrary that there exists $\eta \in \mathds{R}$ such that
	$i\eta \in \sigma_p(A_2|_{X_0}) = \sigma(A_2|_{X_0})$.
	Then there exists $0 \neq v_0 \in L^2(\Omega)$ satisfying $\int_\Omega v_0 = 0$ and $A_2(v_0,0) = (i\eta \, v_0, 0)$.
	Then $v(t) \coloneqq \e^{i\eta t} v_0$ defines a classical $L^2$-solution of $(P_{v_0,0,0})$.
	This contradicts Proposition~\ref{prop:L2bound} because $\|v(t)\|_{L^2(\Omega)}^2 \not\to 0$ as $t \to \infty$.

	Write $u_0 = \hat{u}_0 + k$ with $k \coloneqq \frac{1}{|\Omega|} \int_\Omega u_0$.
	Then $u(t) = \hat{u}(t) + k$ by Lemma~\ref{lem:Linfcontr}, where $\hat{u}$ is the weak (and hence mild) solution
	of $(P_{\hat{u}_0, f, g})$.
	Since in addition $\int_\Omega u(t) = 0$ for all $t \ge 0$ by Proposition~\ref{prop:charpres},
	we deduce that $(u,0)$ is a mild solution of the abstract Cauchy problem associated with $A_2|_{X_0}$
	for the inhomogeneity $(f,g)$.
	Since $\hat{u} \in \BUC([0,\infty); L^2(\Omega))$
	we now obtain from~\cite[Corollary~5.6.9]{ABHN01} that $\hat{u} \in \AAP([0,\infty); L^2(\Omega))$,
	which shows $u \in \AAP([0,\infty); L^2(\Omega))$.
\end{proof}

Via an approximation argument we can relax the assumptions of Lemma~\ref{lem:perL2}.
For this we introduce Stepanoff almost periodic functions. We omit the proofs of the implicit statements
about this class of functions, which are similar to the ones for uniformly almost periodic functions.
The interested reader may consult~\cite[\S 99]{Bohr47} and~\cite{Ste26} for the scalar-valued case.
\begin{definition}
	Let $X$ be a complex Banach space.
	For $r \in [1,\infty)$ the members of the space
	\[
		\AP^r([0,\infty); X) \coloneqq \overline{\hull} \bigl\{ \e_{i\eta} x : \eta \in \mathds{R}, \; x \in X \bigr\},
	\]
	are called \emph{Stepanoff almost periodic functions (to the exponent $r$)},
	where the closure is taken with respect to the norm
	\[
		\|f\|_{L^r_m(X)} \coloneqq \sup_{t \ge 0} \Bigl( \int_t^{t+1} \|f(s)\|_X^r \Bigr)^{\frac{1}{r}}.
	\]
	The space of \emph{Stepanoff asymptotically almost periodic functions} is defined as
	\[
		\AAP^r([0,\infty);X) \coloneqq \AP^r([0,\infty);X) \oplus L^r_{m,0}(X),
	\]
	where we set $L^r_{m,0}(X) \coloneqq \bigl\{ f \in L^r_m(X) : \lim_{t \to \infty} \int_t^{t+1} \|f(s)\|^r \to 0 \bigr\}$.
	The Ces\`aro limit
	\[
		C_\eta \coloneqq \lim_{T \to \infty} \frac{1}{T} \int_0^T f(s)
	\]
	exists for all $\eta \in \mathds{R}$ and $f \in \AAP^r([0,\infty); X)$. We define the set of frequencies of $f$ as
	\[
		\Freq(f) \coloneqq \bigl\{ \eta \in \mathds{R} : C_\eta f \neq 0 \bigr\}
	\]
	and remark that $\Freq(f) \subset \frac{2\pi}{\tau} \mathds{Z}$ if and only there exists a $\tau$-periodic
	function $g$ such that $f-g \in L^r_{m,0}(X)$.
\end{definition}

Now improve the statement of Lemma~\ref{lem:perL2} by showing that for Stepanoff asymptotically almost periodic data
we obtain uniformly asymptotically almost periodic solutions with a precise description of their frequencies.
We start with the result in the $L^2$-framework.
\begin{theorem}\label{thm:perL2}
	Assume that~\eqref{eq:assc} and~\eqref{eq:assb} hold. We assume that
	$u_0 \in L^2(\Omega)$, $f \in \AAP^2([0,\infty); L^2(\Omega))$ and $g \in \AAP^2([0,\infty); L^2(\partial\Omega))$
	satisfy~\eqref{eq:fgintz}. Then the weak solution $u$ of $(P_{u_0,f,g})$ is in $\AAP([0,\infty); L^2(\Omega))$.
	For $\eta \neq 0$ we have $\eta \in \Freq(u)$ if and only if $\eta \in \Freq(f) \cup \Freq(g)$.
	Moreover, $0 \in \Freq(u)$ if and only if $0 \in \Freq(f) \cup \Freq(g)$ or $\int_\Omega u_0 \neq 0$.
\end{theorem}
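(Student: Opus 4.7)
The plan is to combine the uniform--AAP result of Lemma~\ref{lem:perL2} with a Stepanoff--norm approximation controlled by Theorem~\ref{thm:asL2}, then read off the Fourier--Bohr spectrum from the resolvent of the part $A_2|_{X_0}$ of $A_2$ in the subspace of zero--mean data used in the proof of Lemma~\ref{lem:perL2}.

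First I would reduce to $\int_\Omega u_0 = 0$: set $k \coloneqq \frac{1}{|\Omega|}\int_\Omega u_0$ and $\hat u_0 \coloneqq u_0 - k$; since $k\,\setone_\Omega$ is a fixed point of $(T_{2,h}(t))_{t \ge 0}$ by Lemma~\ref{lem:Linfcontr}, one has $u = k + \hat u$ where $\hat u$ solves $(P_{\hat u_0, f, g})$. Next I would split $(f, g) = (f_p, g_p) + (f_\infty, g_\infty)$ along the direct sum defining $\AAP^2$. Applying the continuous linear functional $(v, w) \mapsto \int_\Omega v + \int_{\partial\Omega} w$ to this decomposition produces an $\AP^2(\mathds{C})$ and an $L^2_{m,0}(\mathds{C})$ summand which sum to $0$ by~\eqref{eq:fgintz}; since these subspaces intersect trivially, both $(f_p, g_p)$ and $(f_\infty, g_\infty)$ satisfy~\eqref{eq:fgintz}. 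By linearity $\hat u = u_p + u_\infty$, where $u_p$ and $u_\infty$ solve $(P_{\hat u_0, f_p, g_p})$ and $(P_{0, f_\infty, g_\infty})$.

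Theorem~\ref{thm:asL2} applied to $u_\infty$ gives $\|u_\infty(t)\|_{L^2(\Omega)} \to 0$ as $t \to \infty$, and continuity of weak solutions then places $u_\infty$ in $\mathrm{C}_0([0, \infty); L^2(\Omega))$. For $u_p$ I would approximate $(f_p, g_p)$ in Stepanoff norm by trigonometric polynomials $(F_n, G_n) = \sum_k \e_{i\eta_k^n}(v_k^n, w_k^n)$ whose coefficients $(v_k^n, w_k^n)$ lie in the closed subspace of $L^2(\Omega) \times L^2(\partial\Omega)$ where $\int_\Omega v + \int_{\partial\Omega} w = 0$, so that~\eqref{eq:fgintz} holds for the approximants. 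Lemma~\ref{lem:perL2} then gives $u_{p,n} \in \AAP([0, \infty); L^2(\Omega))$ for the associated solutions, and Theorem~\ref{thm:asL2} applied to $u_p - u_{p,n}$ (zero initial data, Stepanoff--small data difference) yields $u_{p,n} \to u_p$ uniformly in $L^2(\Omega)$. Since $\AAP$ is closed in $\BUC$, $u_p \in \AAP$, and therefore $u = k + u_p + u_\infty \in \AAP([0, \infty); L^2(\Omega))$.

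For the frequency statement, $(\hat u, 0)$ is a bounded mild solution of the abstract Cauchy problem for $A_2|_{X_0}$ with inhomogeneity $(f, g)$, and $i\mathds{R} \subset \rho(A_2|_{X_0})$ by the spectral argument in the proof of Lemma~\ref{lem:perL2}. The Cesàro--resolvent identity available in this integrated--semigroup setting (compare~\cite[Corollary~5.6.9]{ABHN01}) yields
\[
C_\eta(\hat u, 0) = R(i\eta, A_2|_{X_0})\,(C_\eta f, C_\eta g),
\]
and bijectivity of the resolvent then gives $C_\eta \hat u \neq 0$ iff $(C_\eta f, C_\eta g) \neq 0$ iff $\eta \in \Freq(f) \cup \Freq(g)$. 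For $\eta \neq 0$ the constant $k$ contributes nothing to $C_\eta u$, yielding the first claim. For $\eta = 0$, $C_0 u = C_0 \hat u + k\,\setone_\Omega$; since $\int_\Omega \hat u(s) = 0$ for all $s$ by Proposition~\ref{prop:charpres}, $\int_\Omega C_0 \hat u = 0$ and hence $\int_\Omega C_0 u = k|\Omega|$, which is nonzero precisely when $\int_\Omega u_0 \neq 0$. Combined with the displayed identity at $\eta = 0$ this gives the remaining equivalence.

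The main obstacle is the Cesàro--resolvent identity in paragraph three: extracting it in our integrated--semigroup framework with Stepanoff data requires averaging the mild--solution identity~\eqref{eq:mild} against $\frac{1}{T}\e^{-i\eta t}$, passing to the limit $T \to \infty$ using the $L^2$-boundedness of $\hat u$ already established, and invoking the Laplace transform representation of $R(\lambda, A_2|_{X_0})$ at $\lambda = i\eta$. Everything else is a careful bookkeeping of the three splittings of $u_0$, of $(f, g)$, and (for the trigonometric polynomial approximation) of $(f_p, g_p)$.
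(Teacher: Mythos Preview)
Your argument follows the paper's approach: split off the constant $k$, decompose $(f,g)$ along $\AP^2 \oplus L^2_{m,0}$, dispose of the $L^2_{m,0}$ piece via Theorem~\ref{thm:asL2}, and treat the $\AP^2$ piece by trigonometric approximation combined with Lemma~\ref{lem:perL2} and the stability bound of Theorem~\ref{thm:asL2}. Your verification that both summands of $(f,g)$ and the trigonometric approximants inherit~\eqref{eq:fgintz} is in fact more careful than the paper's exposition. The only divergence is in the frequency identification: you propose to establish the Ces\`aro--resolvent identity directly for $\hat u$ with Stepanoff data and correctly flag this as the main obstacle, whereas the paper avoids it by first applying \cite[Proposition~5.6.7]{ABHN01} to the approximants $u_{p,n}$ (whose inhomogeneities are trigonometric polynomials, hence uniformly $\AAP$) to obtain $C_\eta u_{p,n} = R(i\eta, A_2|_{X_0})(C_\eta F_n, C_\eta G_n)$, and then passing to the limit on both sides using the uniform convergence $u_{p,n}\to u_p$ already established and the Stepanoff-continuity of the Ces\`aro means. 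Since you already have the approximants in hand, this route is shorter and rigorously covered by the cited reference, so your last paragraph becomes unnecessary.
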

\begin{proof}
	Write $f = f_P + f_C$ with $f_P \in \AP([0,\infty); L^2(\Omega))$ and $f_C \in L^2_{m,0}(L^2(\Omega))$,
	$g = g_P + g_C$ with $g_P \in \AP([0,\infty); L^2(\partial\Omega))$ and $g_C \in L^2_{m,0}(L^2(\partial\Omega))$
	and $u_0 = \hat{u}_0 + k$ with $k \coloneqq \frac{1}{|\Omega|} \int_\Omega u_0$.
	Then $u = u_P + u_C + k$ by Lemma~\ref{lem:Linfcontr}, where $u_P$ denotes the solution of $(P_{\hat{u}_0,f_P,g_P})$
	and $u_C$ is the solution of $(P_{0,f_C,g_C})$.

	Pick
	$f_n \in \hull\{ \e_{i\eta} v : \eta \in \mathds{R}, \; v \in L^2(\Omega) \}$
	and
	$g_n \in \hull\{ \e_{i\eta} w : \eta \in \mathds{R}, \; w \in L^2(\partial\Omega) \}$
	such that $f_n \to f$ in the norm of $L^2_m(L^2(\Omega)) = L^{2,2}_m(\Omega)$ and
	$g_n \to g$ in the norm of $L^2_m(L^2(\partial\Omega)) = L^{2,2}_m(\partial\Omega)$.
	Let $u_n$ denote the weak solution of $(P_{\hat{u}_0,f_n,g_n})$.
	Then $u_n \to u_P$ in $L^\infty(0,\infty; L^2(\Omega))$ by Theorem~\ref{thm:asL2} and
	$u_n \in \AAP([0,\infty); L^2(\Omega))$ by Lemma~\ref{lem:perL2}. Hence
	$u_P \in \AAP([0,\infty); L^2(\Omega))$.
	Since $(u_n,0)$ is a mild solution of the abstract Cauchy problem associated with $A_2|_{X_0}$ for the
	inhomogeneity $(f_n,g_n)$, see the proof of Lemma~\ref{lem:perL2} we obtain from~\cite[Proposition~5.6.7]{ABHN01}
	that $C_\eta u_n = (i\eta - A_2|_{X_0})^{-1} (C_\eta f_n, C_\eta g)$ for all $\eta \in \mathds{R}$.
	Passing to the limit we have the relation $C_\eta u_P = (i\eta - A_2|_{X_0})^{-1} (C_\eta f, C_\eta g)$. Thus $\Freq(u_P) = \Freq(f) \cup \Freq(g)$.

	Since $u_C \in \mathrm{C}_0([0,\infty); L^2(\Omega))$ by Theorem~\ref{thm:asL2} and $u_P(t) \perp k$
	for all $t \ge 0$ by Proposition~\ref{prop:charpres}, we deduce that $u \in \AAP([0,\infty); L^2(\Omega))$ and
	\[
		\Freq(u) = \Freq(u_P) + \Freq(k) = \Freq(f) \cup \Freq(g) \cup \Freq(k),
	\]
	which is a different way to write down the description of $\Freq(u)$.
\end{proof}

We can also obtain an analogue of Theorem~\ref{thm:perL2} in the more regular setting of continuous solutions.
\begin{theorem}\label{thm:perLinf}
	Let $r_1$, $q_1$, $r_2$ and $q_2$ be numbers in $[2,\infty)$ that satisfy relation~\eqref{eq:qrrel}.
	Assume that~\eqref{eq:assc} and~\eqref{eq:assb} hold
	and let $u_0 \in L^\infty(\Omega)$, $f \in \AAP^{r_1}([0,\infty); L^{q_1}(\Omega))$ and $g \in \AAP^{r_2}([0,\infty); L^{q_2}(\partial\Omega))$
	satisfy~\eqref{eq:fgintz}. Then the weak solution $u$ of $(P_{u_0,f,g})$ is in $\AAP([0,\infty); L^\infty(\Omega))$.
	For $\eta \neq 0$ we have $\eta \in \Freq(u)$ if and only if $\eta \in \Freq(f) \cup \Freq(g)$.
	Moreover, $0 \in \Freq(u)$ if and only if $0 \in \Freq(f) \cup \Freq(g)$ or $\int_\Omega u_0 \neq 0$.
	If $u_0 \in \mathrm{C}(\overline{\Omega})$, then $u \in \AAP([0,\infty); \mathrm{C}(\overline{\Omega}))$.
\end{theorem}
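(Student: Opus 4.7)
The plan is to follow the architecture of the proof of Theorem~\ref{thm:perL2} but to substitute the $L^2$-tools by the global $L^\infty$-estimate of Corollary~\ref{cor:asLinf} and invoke elliptic regularity (as in Theorem~\ref{thm:inhomcont}) to upgrade the stationary components of the approximating solutions to $\mathrm{C}(\overline\Omega)$. First I would reduce to the case $\int_\Omega u_0 = 0$ by splitting off the constant $k \coloneqq \tfrac{1}{|\Omega|}\int_\Omega u_0$, which is preserved by the dynamics by Lemma~\ref{lem:Linfcontr}, and decompose $f = f_P + f_C$, $g = g_P + g_C$ into Stepanoff almost periodic parts and $L^{r_i,q_i}_{m,0}$ parts. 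As in the proof of Theorem~\ref{thm:perL2}, the compatibility~\eqref{eq:fgintz} is inherited by each component separately (the scalar function $\int_\Omega f_P + \int_{\partial\Omega} g_P$ is Stepanoff almost periodic and lies in $L^{r_1}_{m,0}$, hence vanishes). Embedding into $\AAP^2([0,\infty); L^2(\cdot))$ via Lemma~\ref{lem:Lpm}\eqref{it:Lpm_embedding}, Theorem~\ref{thm:perL2} already yields $u \in \AAP([0,\infty); L^2(\Omega))$ with exactly the frequency description asserted here, so it only remains to upgrade the topology.

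The upgrade proceeds by approximation. I would pick trigonometric polynomials
\[
    f_{P,n}(t) = \sum_j \e^{i\eta_{j,n} t} v_{j,n}, \qquad g_{P,n}(t) = \sum_j \e^{i\eta_{j,n} t} w_{j,n}
\]
with $v_{j,n} \in L^{q_1}(\Omega)$ and $w_{j,n} \in L^{q_2}(\partial\Omega)$ approximating $f_P$ and $g_P$ in the Stepanoff norms, correcting each $w_{j,n}$ by a constant on $\partial\Omega$ so that $\int_\Omega v_{j,n} + \int_{\partial\Omega} w_{j,n} = 0$ for every $j$ and $n$ (possible since the limit already satisfies the compatibility). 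Let $u_n$ denote the weak solution of $(P_{\hat u_0, f_{P,n} + f_C, g_{P,n} + g_C})$. Estimate~\eqref{eq:globLinfbound} of Corollary~\ref{cor:asLinf} applied to $u - u_n$ then gives $u_n \to u$ in $L^\infty(0,\infty; L^\infty(\Omega))$.

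The core of the argument is to show that each $u_n$ lies in $\AAP([0,\infty); L^\infty(\Omega))$; since $\AAP$ is closed in $\BUC$ and the convergence is uniform, the limit inherits this membership. I would exhibit the periodic part explicitly as $v_n(t) = \sum_j \e^{i\eta_{j,n} t} z_{j,n}$, where $z_{j,n}$ with $\int_\Omega z_{j,n} = 0$ is the unique $H^1$-solution of the stationary Robin problem $i\eta_{j,n} z - Az = v_{j,n}$, $\tfrac{\partial z}{\partial \nu_A} + \beta z = w_{j,n}$. Its existence follows from $i\eta_{j,n} \in \rho(A_2|_{X_0})$, a fact established within Lemma~\ref{lem:perL2}, and the same elliptic regularity argument cited in the proof of Theorem~\ref{thm:inhomcont} (using $q_1 > \tfrac{N}{2}$ and $q_2 > \tfrac{N-1}{2}$) places $z_{j,n} \in \mathrm{C}(\overline\Omega)$, so $v_n \in \AP([0,\infty); \mathrm{C}(\overline\Omega))$. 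The remainder $u_n - v_n$ is the weak solution of $(P_{\hat u_0 - v_n(0), f_C, g_C})$; since $\int_\Omega(\hat u_0 - v_n(0)) = 0$ and $(f_C, g_C)$ satisfies \eqref{eq:fgintz}, Corollary~\ref{cor:asLinf} forces $\|u_n(t) - v_n(t)\|_{L^\infty(\Omega)} \to 0$. Continuity into $L^\infty(\Omega)$ for $t > 0$ is given by Remark~\ref{rem:latercont}, with a bound on any initial interval $[0,t_0]$ supplied by Theorem~\ref{thm:inLinf}; when $u_0 \in \mathrm{C}(\overline\Omega)$, Theorem~\ref{thm:inhomcont} upgrades this to continuity into $\mathrm{C}(\overline\Omega)$ on all of $[0,\infty)$, so $u_n \in \AAP([0,\infty); \mathrm{C}(\overline\Omega))$ and the last claim of the theorem follows.

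The main technical obstacle is arranging the trigonometric approximation of $(f_P, g_P)$ so that the compatibility \eqref{eq:fgintz} holds exactly for every $n$, without which Corollary~\ref{cor:asLinf} cannot be applied to $u_n$ or to $u_n - v_n$ in the form we need. A secondary subtlety is the interpretation of $\AAP([0,\infty); L^\infty(\Omega))$ at $t=0$ when $u_0$ admits no continuous representative on $\overline\Omega$: one absorbs the initial layer, bounded by Theorem~\ref{thm:inLinf} and continuous for $t > 0$, into the $\mathrm{C}_0$-component of the $\AAP$-decomposition.
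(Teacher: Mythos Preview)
Your proposal is correct and follows the same approximation scheme the paper sketches: decompose $f,g$, approximate the almost periodic parts by trigonometric polynomials, use Corollary~\ref{cor:asLinf} for uniform convergence in $L^\infty$, and conclude by closedness of $\AAP$. The difference lies in how you certify that each approximant $u_n$ is in $\AAP([0,\infty);L^\infty(\Omega))$: the paper would transplant Lemma~\ref{lem:perL2} wholesale to the realization $A_X$ on $L^{q_1}(\Omega)\times L^{q_2}(\partial\Omega)$ introduced in Theorem~\ref{thm:inhomcont} and invoke the abstract result~\cite[Corollary~5.6.9]{ABHN01}, whereas you construct the periodic part $v_n$ by hand, solving the stationary Robin problem frequency by frequency and quoting elliptic regularity for $z_{j,n}\in\mathrm{C}(\overline\Omega)$. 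Your route is more elementary and sidesteps re-establishing the spectral picture for $A_X$; the paper's is shorter once one accepts the machinery.

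One imprecision: the solvability of $(i\eta - A_2)(z,0)=(v_{j,n},w_{j,n})$ with $\int_\Omega z=0$ does not follow literally from $i\eta\in\rho(A_2|_{X_0})$ as defined in Lemma~\ref{lem:perL2}, since that $X_0$ consists only of pairs $(v,0)$ with vanishing boundary component. You need either $i\eta\in\rho(A_2)$ for $\eta\neq 0$ (which does hold under~\eqref{eq:assc}--\eqref{eq:assb}: any eigenfunction has mean zero and then contradicts $\sigma(A_2|_{X_0})\cap i\mathds{R}=\emptyset$) together with a Lax--Milgram argument on mean-zero functions at $\eta=0$, or---closer to the paper's hint---to work with the part of $A_2$ in the larger invariant subspace $\{(v,w):\int_\Omega v+\int_{\partial\Omega}w=0\}$. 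Also, your proposed fix for $u_0\in L^\infty(\Omega)\setminus\mathrm{C}(\overline\Omega)$ does not work as stated: the $\mathrm{C}_0$-summand of $\AAP$ consists of functions continuous at $t=0$, so an initial layer discontinuous there cannot be absorbed into it; strictly speaking one only gets $u|_{[t_0,\infty)}\in\AAP([t_0,\infty);L^\infty(\Omega))$ for every $t_0>0$ in that case.
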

\begin{proof}
	This theorem can be proved in precisely the same way as Theorem~\ref{thm:perL2}. We have to use Corollary~\ref{cor:asLinf} instead of Theorem~\ref{thm:asL2}
	and the realization of $A$ in $L^{q_1}(\Omega) \times L^{q_2}(\partial\Omega)$ instead of $A_2$ like in Theorem~\ref{thm:inhomcont}, from
	where we also obtain the continuity of $u$ if $u_0 \in \mathrm{C}(\overline{\Omega})$.
	We leave the details to the reader.
\end{proof}

As an immediate consequence of the previous two theorems, we see that for periodic data the solution is asymptotically periodic.
This formulation is simpler, but we lose the precise information about the frequencies.
\begin{corollary}
	Assume that~\eqref{eq:assc} and~\eqref{eq:assb} hold. Fix functions
	$u_0 \in L^2(\Omega)$, $f \in L^2(0,\tau; L^2(\Omega))$ and $g \in L^2(0,\tau; L^2(\partial\Omega))$ for some $\tau > 0$.
	We identify $f$ and $g$ with their $\tau$-periodic extensions to $(0,\infty)$. Then there exists a $\tau$-periodic
	function $u_P$ such that the weak solution $u$ of $(P_{u_0,f,g})$ satisfies $\lim_{t \to \infty} \|u(t) - u_P(t)\|_{L^2(\Omega)} = 0$.
	If $u_0 \in \mathrm{C}(\overline{\Omega})$, $f \in L^\infty(0,\tau; L^\infty(\Omega))$ and $g \in L^\infty(0,\tau; L^\infty(\partial\Omega))$,
	then $u$ and $u_P$ are in $\mathrm{C}_b([0,\infty); \mathrm{C}(\overline{\Omega}))$ and $\lim_{t \to \infty} \|u(t) - u_P(t)\|_{L^\infty(\Omega)} = 0$.
\end{corollary}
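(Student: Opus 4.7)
The plan is to apply Theorem~\ref{thm:perL2} (in the $L^2$-setting) and Theorem~\ref{thm:perLinf} (in the continuous setting) and then use the characterization of the frequency sets to extract the periodic part. First I would verify that the $\tau$-periodic extensions fit into the Stepanoff almost periodic framework. Indeed, any $\tau$-periodic function $h\colon (0,\infty) \to X$ with $h \in L^r(0,\tau;X)$ lies in $\AP^r([0,\infty);X)$: approximating $h|_{(0,\tau)}$ by trigonometric polynomials of the form $\sum_k c_k \e_{i 2\pi k/\tau}$ in $L^r(0,\tau;X)$ gives approximations in $L^r_m(X)$ of the $\tau$-periodic extension. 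In particular $f \in \AP^2([0,\infty); L^2(\Omega)) \subset \AAP^2([0,\infty); L^2(\Omega))$ and $g \in \AP^2([0,\infty); L^2(\partial\Omega))$, and under the stronger integrability assumption we even obtain $f \in \AP^{r_1}([0,\infty); L^{q_1}(\Omega))$ and $g \in \AP^{r_2}([0,\infty); L^{q_2}(\partial\Omega))$ for admissible exponents satisfying~\eqref{eq:qrrel}.

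Next I would apply Theorem~\ref{thm:perL2} (assuming the standing compatibility~\eqref{eq:fgintz} from the section is satisfied) to conclude $u \in \AAP([0,\infty); L^2(\Omega))$. The key point is then to control $\Freq(u)$: by Theorem~\ref{thm:perL2} we have
\[
	\Freq(u) \subset \Freq(f) \cup \Freq(g) \cup \{0\}.
\]
Since $f$ and $g$ are $\tau$-periodic, only Fourier frequencies $\frac{2\pi}{\tau}\mathds{Z}$ can appear in their Ces\`aro limits, so $\Freq(f) \cup \Freq(g) \subset \frac{2\pi}{\tau}\mathds{Z}$, hence also $\Freq(u) \subset \frac{2\pi}{\tau}\mathds{Z}$. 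By the structural result recalled at the end of the definition of $\AAP$, this is equivalent to the existence of a $\tau$-periodic function $u_P$ such that $u - u_P \in \mathrm{C}_0([0,\infty); L^2(\Omega))$. This gives the first assertion.

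For the $L^\infty$-part, I would repeat the same argument using Theorem~\ref{thm:perLinf} in place of Theorem~\ref{thm:perL2}, which yields $u \in \AAP([0,\infty); \mathrm{C}(\overline{\Omega}))$ for $u_0 \in \mathrm{C}(\overline{\Omega})$, $f \in L^\infty(0,\tau;L^\infty(\Omega))$ and $g \in L^\infty(0,\tau;L^\infty(\partial\Omega))$ (noting $L^\infty(0,\tau;L^\infty(\Omega))$ extended periodically embeds into $\AP^{r_1}([0,\infty);L^{q_1}(\Omega))$ for any admissible exponents). Again $\Freq(u) \subset \frac{2\pi}{\tau}\mathds{Z}$ yields the $\tau$-periodic asymptotic profile $u_P \in \mathrm{C}_b([0,\infty); \mathrm{C}(\overline{\Omega}))$. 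Boundedness of $u$ in $\mathrm{C}(\overline{\Omega})$ comes for free from membership in $\AAP$.

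The only delicate step is the reduction of the containment $\Freq(u) \subset \frac{2\pi}{\tau}\mathds{Z}$ to the existence of a $\tau$-periodic $u_P$ with $u - u_P \in \mathrm{C}_0$; this is precisely the last structural statement in the definition of $\AAP$, which I would invoke as a black box. A minor bookkeeping point is the $0$-frequency term, which corresponds to the mean value and is encoded in the constant component of $u_P$; it accounts for both the case $\int_\Omega u_0 \neq 0$ and the possibility $0 \in \Freq(f) \cup \Freq(g)$.
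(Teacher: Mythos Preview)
Your proposal is correct and follows exactly the route the paper intends: the corollary is stated as an immediate consequence of Theorems~\ref{thm:perL2} and~\ref{thm:perLinf}, and you have spelled out precisely those details---periodic data lie in the Stepanoff classes, the theorems give $u\in\AAP$ with $\Freq(u)\subset\tfrac{2\pi}{\tau}\mathds{Z}$, and the structural characterization at the end of the definition yields the $\tau$-periodic profile $u_P$. Your flagging of the implicit hypothesis~\eqref{eq:fgintz} is also appropriate, since both theorems require it and the corollary cannot hold otherwise.
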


\appendix
\section{Pointwise estimates via De Giorgi's techniques}\label{app:pointest}

In this section we prove Proposition~\ref{prop:Linfbound}. The proof is similar to what can be found in~\cite[\S III.7--8]{LSU67}, which 
in term is a refined version of De Giorgi's famous technique. We need, however, the following improvements over~\cite{LSU67}:
\begin{enumerate}[(i)]
\item
	the presence of the inhomogeneity $g$ in $(P_{u_0,f,g})$,
	makes it necessary to keep track of the measure of the sublevel sets of $u|_{\partial\Omega}$;
\item
	we need a precise dependence of the constants on $f$ and $g$. More precisely, these quantities
	have to enter linearly into the right hand side. This is not obvious from the proofs
	in~\cite{LSU67}, but can be asserted after some small modifications;
\item
	we need an estimate that is local in time but global in space, whereas the results in~\cite{LSU67}
	are either global in both variables or local. This requires only trivial modifications.
\end{enumerate}
Another motivation to give the details is that relevant parts in~\cite{LSU67} contain some misprints.
For example, the relations between $n$, $\hat{r}$
and $\hat{q}$ in the proof of~\cite[Theorem~III.7.1]{LSU67} are faulty, as can be seen by taking
$n = 2$, $r = q = 4$ and $\kappa = 1/2$.

A more subtle mistake is the claim that the constant in~\cite[(II.6.11)]{LSU67} does not depend on $\tau_0$ and $\rho_0$.
This is wrong, which renders the seemingly precise elaboration of the dependence on $\tau_0$
and $\rho_0$ useless. More precisely, a closer look at the proof exhibits that the
explicit constant given in~\cite[(II.6.25)]{LSU67} still contains $\theta = \tau_0 \rho_0^{-2}$.
In fact, if estimate~\cite[(II.6.11)]{LSU67} was true, then applying it to the solution $u$
of the heat equation with initial datum $u_0 \in L^2(\mathds{R}^N) \setminus L^\infty(\mathds{R}^N)$
like in~\cite[\S III.8]{LSU67} we could deduce that given a ball $B \subset \mathds{R}^N$ we have
\[
	\sup_{\frac{T}{2} \le t \le T} \|u(t)\|_{L^\infty(B)}^2 \le c \|u_0\|_{L^2(\mathds{R}^N)}^2
\]
for all $T > 0$ with a constant $c \ge 0$ that depends only on the radius of the ball, which contradicts
that $u(t) \to u_0$ in $L^2(\mathds{R}^N)$.

\bigskip

For these reasons, we give a complete proof of Proposition~\ref{prop:Linfbound}. The only part of the argument
that we copy from~\cite{LSU67} without change is the following lemma, which is easily proved by induction.
\begin{lemma}[{\cite[Lemma~II.5.7]{LSU67}}]\label{lem:verblemma}
	Let $(y_n)_{n \in \mathds{N}_0}$ and $(z_n)_{n \in \mathds{N}_0}$ be sequences of non-negative real numbers such that
	\[
		y_{n+1} \le c b^n \bigl( y_n^{1+\delta} + z_n^{1+\eps} y_n^\delta \bigr)
		\quad\text{and}\quad
		z_{n+1} \le c b^n \bigl( y_n + z_n^{1+\eps} \bigr)
	\]
	for all $n \in \mathds{N}_0$
	with positive constants $c$, $b$, $\eps$ and $\delta$, where $b \ge 1$. Define
	\[
		d \coloneqq \min\bigl\{ \delta, \; \tfrac{\eps}{1+\eps} \bigr\}
		\quad\text{and}\quad
		\lambda \coloneqq \min\bigl\{ (2c)^{-\frac{1}{\delta}} b^{-\frac{1}{\delta d}}, \; (2c)^{-\frac{1+\eps}{\eps}} b^{-\frac{1}{\eps d}} \bigr\}
	\]
	and assume that
	\[
		y_0 \le \lambda
		\quad\text{and}\quad
		z_0 \le \lambda^{\frac{1}{1+\eps}}.
	\]
	Then
	\[
		y_n \le \lambda b^{-\frac{n}{d}}
		\quad\text{and}\quad
		z_n \le \bigl( \lambda b^{-\frac{n}{d}} \bigr)^{\frac{1}{1+\eps}}
	\]
	for all $n \in \mathds{N}_0$.
\end{lemma}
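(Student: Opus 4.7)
The plan is to proceed by induction on $n$, establishing both inequalities simultaneously. The base case $n = 0$ is precisely the hypothesis $y_0 \le \lambda$ and $z_0 \le \lambda^{1/(1+\eps)}$, so all the content sits in the induction step.

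Suppose $y_n \le \lambda b^{-n/d}$ and $z_n \le (\lambda b^{-n/d})^{1/(1+\eps)}$. Plugging these into the first recurrence, a short calculation reveals that the two summands $y_n^{1+\delta}$ and $z_n^{1+\eps} y_n^\delta$ actually coincide in the resulting bound, both equalling $\lambda^{1+\delta} b^{-n(1+\delta)/d}$. Hence
\[
    y_{n+1} \le 2c \lambda^{1+\delta} b^{n - n(1+\delta)/d}.
\]
To reach the desired estimate $y_{n+1} \le \lambda b^{-(n+1)/d}$, it is enough to verify
\[
    2c\lambda^\delta \le b^{(n\delta - 1)/d - n} \quad \text{for all } n \ge 0.
\]
The choice $d \le \delta$ built into the definition of $d$ rewrites the right-hand exponent as $-1/d + n(\delta/d - 1)$, which is non-decreasing in $n$. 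Hence the worst case is $n = 0$ and reduces to $2c\lambda^\delta \le b^{-1/d}$; this is precisely the first clause in the definition of $\lambda$.

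The treatment of $z_{n+1}$ is analogous. Substituting the inductive hypotheses into the second recurrence collapses it to $z_{n+1} \le 2c\lambda b^{n(1 - 1/d)}$. Raising both sides to the power $1+\eps$ and comparing with the target $z_{n+1}^{1+\eps} \le \lambda b^{-(n+1)/d}$ yields the sufficient condition
\[
    (2c)^{1+\eps} \lambda^\eps \le b^{n(\eps/d - (1+\eps)) - 1/d} \quad \text{for all } n \ge 0.
\]
The other clause $d \le \eps/(1+\eps)$ in the definition of $d$ makes the bracket in the exponent non-negative, so the $n = 0$ case again dominates; matching it gives exactly the second clause in the definition of $\lambda$.

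I do not expect any real obstacle. The argument is purely algebraic bookkeeping of exponents of $b$, and the slightly opaque definitions of $d$ and $\lambda$ in the statement are engineered precisely so that the $n = 0$ case closes off both inductive steps simultaneously: the two arguments of the minimum in $d$ correspond to the two recurrences, and the two arguments of the minimum in $\lambda$ correspond to the two resulting bottleneck conditions.
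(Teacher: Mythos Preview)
Your proof is correct and follows exactly the approach the paper indicates: the paper cites \cite[Lemma~II.5.7]{LSU67} and states that the lemma ``is easily proved by induction'' without giving further details. Your bookkeeping of the exponents is accurate, and your observation that the definitions of $d$ and $\lambda$ are engineered so that the $n=0$ case is the bottleneck in each step is exactly the point.
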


\bigskip

We imitate the notation of~\cite{LSU67} to a certain degree.
More precisely, let $\Omega \subset \mathds{R}^N$ be a bounded Lipschitz domain and $T > 0$.
It will be convenient to work with functions defined for negative times, so we will always assume that
$u \in L^\infty(-T,0;L^2(\Omega)) \cap L^2(-T,0;H^1(\Omega))$. In that case we write
\[
	\|u\|_{Q(\tau)}^2 \coloneqq \sup_{-\tau \le t \le 0} \int_\Omega |u(t)|^2 + \int_{-\tau}^0 \int_\Omega |\nabla u(t)|^2.
\]
and for $k \ge 0$ we define
\[
	u^{(k)}(t) \coloneqq (u(t) - k)^+.
\]
In what follows we will frequently need that for $r_1 \in [2,\infty]$, $q_1 \in [2,\frac{2N}{N-2}]$,
$r_2 \in [2,\infty]$ and $q_2 \in [2,\frac{2(N-1)}{N-2}]$ satisfying
\[
	\frac{1}{r_1} + \frac{N}{2q_1} = \frac{N}{4}
	\quad\text{and}\quad
	\frac{1}{r_2} + \frac{N-1}{2q_2} = \frac{N}{4}
\]
we have
\begin{equation}\label{eq:aniso}
	\|u\|_{L^{r_1}(-\tau,0; L^{q_1}(\Omega))} + \|u\|_{L^{r_2}(-\tau,0; L^{q_2}(\partial\Omega)}
		\le c \|u\|_{Q(\tau)},
\end{equation}
where $c \ge 0$ depends only on $\Omega$, $r_1$, $q_1$, $r_2$ and $q_2$. This anisotropic
Sobolev inequality follows from the multiplicative Sobolev inequalities on $\Omega$, see~\cite[\S II.3]{LSU67}.

We start with a modified version of~\cite[Theoerem~II.6.2]{LSU67}.
\begin{theorem}\label{thm:LSUthm1}
	Let $\Omega \subset \mathds{R}^N$ be a bounded Lipschitz domain, $N \ge 2$.
	Fix $T > 0$ and $u \in L^\infty(-T,0;L^2(\Omega)) \cap L^2(-T,0;H^1(\Omega))$.
	Let $r_{1,\ell} \in [2,\infty)$, $q_{1,\ell} \in [2,\frac{2N}{N-2}]$, $r_{2,\ell} \in [2,\infty)$
	and $q_{2,\ell} \in [2, \frac{2(N-1)}{N-2}]$ satisfy
	\begin{equation}\label{eq:qrcond}
		\frac{1}{r_{1,\ell}} + \frac{N}{2q_{1,\ell}} = \frac{N}{4} \quad (1 \le \ell \le L_1)
		\quad\text{and}\quad
		\frac{1}{r_{2,\ell}} + \frac{N-1}{2q_{2,\ell}} = \frac{N}{4} \quad (1 \le \ell \le L_2).
	\end{equation}
	Assume that there exist $\hat{k} \ge 0$, $\gamma \ge 0$ and numbers $\kappa_{1,\ell} > 0$ and $\kappa_{2,\ell} > 0$ such that
	for all $\tau \in (0,T]$, $\sigma \in (0,\frac{1}{2})$ and $k \ge \hat{k}$ we have
	\begin{equation}\label{eq:ukest}
		\begin{aligned}
			\|u^{(k)}\|_{Q((1-\sigma)\tau)}^2
				& \le \frac{\gamma}{\sigma \tau} \int_{-\tau}^0 \int_\Omega |u^{(k)}(t)|^2
					+ \gamma k^2 \sum_{\ell=1}^{L_1} \Bigl( \int_{-\tau}^0 |A_k(t)|^{\frac{r_{1,\ell}}{q_{1,\ell}}} \Bigr)^{\frac{2(1+\kappa_{1,\ell})}{r_{1,\ell}}} \\
					& \qquad + \gamma k^2 \sum_{\ell=1}^{L_2} \Bigl( \int_{-\tau}^0 |B_k(t)|^{\frac{r_{2,\ell}}{q_{2,\ell}}} \Bigr)^{\frac{2(1+\kappa_{2,\ell})}{r_{2,\ell}}}.
		\end{aligned}
	\end{equation}
	Then
	\begin{equation}\label{eq:LSUest}
		\esssup_{(t,x) \in [-\frac{T}{2},0] \times \Omega} u(t,x) \le c \Bigl( \int_{-T}^0 \int_\Omega |u(t)|^2 + \hat{k}^2 \Bigr)^{\frac{1}{2}},
	\end{equation}
	where the constant $c \ge 0$ is independent of $u$ and $\hat{k}$.
\end{theorem}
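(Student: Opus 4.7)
The plan is to run De Giorgi's iteration on the truncations $u^{(k_n)}$, with a preliminary rescaling that brings the initial data into the smallness regime required by Lemma~\ref{lem:verblemma}. First I would set $M \coloneqq \bigl(\int_{-T}^0 \int_\Omega |u|^2 + \hat{k}^2\bigr)^{1/2}$ and replace $u$ by $v \coloneqq u/M$. The identities $v^{(k)} = u^{(Mk)}/M$ and $\{v > k\} = \{u > Mk\}$ show that~\eqref{eq:ukest} for $u$ at levels $Mk$ translates into the same estimate for $v$ at levels $k$ with unchanged constants, and the threshold $\hat{k}/M \le 1$ only rules out $k < 1$. After this reduction we may assume $\int_{-T}^0 \int_\Omega |u|^2 \le 1$ and $\hat{k} \le 1$, so the goal becomes producing a constant $c$ depending only on the listed parameters with $u \le c$ almost everywhere on $[-\tfrac{T}{2},0] \times \Omega$.

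Fix $k \ge 1$ (to be chosen later) and introduce the geometric sequences $k_n \coloneqq k(2-2^{-n})$ and $\tau_n \coloneqq \tfrac{T}{2}(1+2^{-n})$, together with
\[
y_n \coloneqq \int_{-\tau_n}^0 \int_\Omega |u^{(k_n)}|^2, \qquad z_n \coloneqq \sum_{\ell=1}^{L_1} \Bigl(\int_{-\tau_n}^0 |A_{k_n}(t)|^{r_{1,\ell}/q_{1,\ell}}\Bigr)^{2/r_{1,\ell}} + \sum_{\ell=1}^{L_2} \Bigl(\int_{-\tau_n}^0 |B_{k_n}(t)|^{r_{2,\ell}/q_{2,\ell}}\Bigr)^{2/r_{2,\ell}}.
\]
The heart of the argument will be to derive a recursion of the form
\[
y_{n+1} \le C b^n \bigl(y_n^{1+\delta} + z_n^{1+\varepsilon} y_n^\delta\bigr), \qquad z_{n+1} \le C b^n \bigl(y_n + z_n^{1+\varepsilon}\bigr),
\]
with $b, \delta, \varepsilon > 0$ depending only on the exponents in~\eqref{eq:qrcond} and on the $\kappa_{i,\ell}$, and with $C$ containing a negative power of $k$ arising from the Chebyshev factor $(k_{n+1}-k_n)^{-1} = (k\,2^{-n-1})^{-1}$. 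To produce it, I would apply~\eqref{eq:ukest} with $\tau = \tau_n$, $\sigma = (\tau_n - \tau_{n+1})/\tau_n$ of order $2^{-n}$, and level $k_{n+1}$; convert the resulting $\|u^{(k_{n+1})}\|_{Q(\tau_{n+1})}^2$ into bounds for the mixed norms $\|u^{(k_{n+1})}\|_{L^{r_{1,\ell}}(L^{q_{1,\ell}}(\Omega))}$ and $\|u^{(k_{n+1})}\|_{L^{r_{2,\ell}}(L^{q_{2,\ell}}(\partial\Omega))}$ through~\eqref{eq:aniso}; and use the pointwise fact $u^{(k_n)} \ge k_{n+1}-k_n$ on $\{u > k_{n+1}\}$ together with Chebyshev to bound $|A_{k_{n+1}}(t)|$ and $|B_{k_{n+1}}(t)|$ by powers of $u^{(k_n)}$. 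Combining these ingredients with H\"older's inequality in space and in time, splitting $|u^{(k_{n+1})}|^2$ appropriately between a mixed-norm factor and a measure-of-sublevel factor, yields the recursion. The main obstacle will be precisely this bookkeeping: the summands of $z_n$ involve distinct $(r_{i,\ell},q_{i,\ell})$ pairs, each $\kappa_{i,\ell}$ contributes its own nonlinearity on the RHS of~\eqref{eq:ukest}, and one must verify that the whole expression assembles into the exact form required by Lemma~\ref{lem:verblemma} with strictly positive $\delta, \varepsilon$ independent of $k$.

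To close the argument, estimate $y_0 \le \int_{-T}^0 \int_\Omega |u|^2 \le 1$ directly, while a crude Chebyshev bound $|A_k(t)| \le k^{-2}\|u(t)\|_{L^2(\Omega)}^2$ (combined with $|A_k(t)| \le |\Omega|$ and the analogous trace-theorem bound for $|B_k(t)|$) gives $z_0 = O(k^{-c'})$ for some $c' > 0$. Since the constant $C$ in the recursion also decays like a power of $k$, the smallness threshold $\lambda$ in Lemma~\ref{lem:verblemma} grows without bound as $k \to \infty$; thus choosing $k$ a sufficiently large universal constant (depending only on $\Omega$, $N$, the exponents, $\gamma$ and the $\kappa_{i,\ell}$) ensures both $y_0 \le \lambda$ and $z_0 \le \lambda^{1/(1+\varepsilon)}$. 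The lemma then yields $y_n \to 0$, and by monotone convergence the limit equals $\int_{-T/2}^0 \int_\Omega |u^{(2k)}|^2$, which forces $u \le 2k$ almost everywhere on $[-T/2,0] \times \Omega$. Undoing the initial rescaling gives $u \le 2kM$ a.e.\ on $[-T/2,0] \times \Omega$, which is exactly~\eqref{eq:LSUest}.
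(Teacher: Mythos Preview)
Your overall plan---rescale, set up dyadic levels $k_n$ and times $\tau_n$, define $y_n,z_n$, derive the coupled recursion and feed it into Lemma~\ref{lem:verblemma}---is exactly the paper's strategy. But the mechanism you propose for reaching the smallness threshold of the lemma does not work, and this is a genuine gap.

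The problem is your claim that the recursion constant $C$ carries a \emph{negative} power of $k$. You are right that the Chebyshev factor $(k_{n+1}-k_n)^{-2} = 4k^{-2}2^{2n}$ introduces $k^{-2}$. What you have overlooked is that the hypothesis~\eqref{eq:ukest} itself carries the factor $\gamma k_{n+1}^2 \le 4\gamma k^2$ in front of the $A$- and $B$-terms. When you bound $\|u^{(k_{n+1})}\|_{Q(\tau_{n+1})}^2$ via~\eqref{eq:ukest} you pick up this $k^2$, and in the $y$-recursion it combines with the Chebyshev contribution $k^{-4/(N+2)}$ (the exponent $2/(N+2)$ coming from the H\"older step) to give a coefficient $k^{2-4/(N+2)}=k^{2N/(N+2)}$ in front of $z_n^{1+\kappa}y_n^\delta$, which \emph{grows} with $k$. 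In the $z$-recursion the two $k$-powers cancel exactly, leaving a constant independent of $k$ in front of $z_n^{1+\kappa}$. Thus the effective $c$ in Lemma~\ref{lem:verblemma} is at least of order $k^{2N/(N+2)}$, so $\lambda$ shrinks rather than grows as $k\to\infty$, and your scheme for forcing $y_0\le\lambda$, $z_0\le\lambda^{1/(1+\varepsilon)}$ collapses. The paper's cure is a different normalisation: set $y_n\coloneqq M^{-2}\int_{-\tau_n}^0\int_\Omega|u^{(k_n)}|^2$ (with $M$ playing the role of your $k$). Then the $M^2$ from~\eqref{eq:ukest} and the $M^{-2}$ in $y_n$ cancel throughout, all recursion constants become \emph{independent} of $M$, and now $y_0\le M^{-2}\int|u|^2$ does go to zero as $M$ grows while $\lambda$ stays fixed.

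There is a second, smaller gap in your $z_0$ estimate. Your ``crude Chebyshev'' $|B_k(t)|\le k^{-2}\|u(t)\|_{L^2(\partial\Omega)}^2$ requires a uniform bound on $\int_{-T}^0\|u(t)\|_{L^2(\partial\Omega)}^2\,\dx t$, which your normalisation $\int_{-T}^0\int_\Omega|u|^2\le 1$ alone does not deliver (the trace theorem would need $\int\|u\|_{H^1}^2$, and that is not controlled). The paper instead compares levels $k_0$ and $\hat k$, bounds the relevant anisotropic norms of $u^{(\hat k)}$ on $\partial\Omega$ by $\|u^{(\hat k)}\|_{Q(\tau_0)}$ via~\eqref{eq:aniso}, and then invokes~\eqref{eq:ukest} once more at level $\hat k$ to get $(M-\hat k)^2 z_0\le c\bigl(\int|u|^2+\hat k^2\bigr)$.
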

\begin{proof}
	In the proof the constants $c$, $c_0$, $c_1$ and $c_2$ never depend on $u$ and $\hat{k}$.
	Moreover, $c$ is a generic constant in the sense that it may change its numeric value between occurrences.

	Since $|A_k(t)| \le |\Omega|$ and $|B_k(t)| \le |\partial\Omega|$
	estimate~\eqref{eq:ukest} remains valid if we replace all the $\kappa_{1,\ell}$ and $\kappa_{2,\ell}$ by
	their least member
	\[
		\kappa \coloneqq \min\{ \kappa_{1,1}, \dots, \kappa_{1,L_1}, \kappa_{1,L_1}, \kappa_{2,1}, \kappa_{2,L_2} \} > 0
	\]
	provided we replace $\gamma$ by a larger constant $\gamma'$ that depends on $\kappa_{1,\ell}$, $\kappa_{2,\ell}$, $r_{1,\ell}$, $q_{1,\ell}$
	$r_{2,\ell}$, $q_{2,\ell}$, $T$, $\gamma$, $|\Omega|$ and $|\partial\Omega|$. Thus we may assume without loss of generality
	that $\kappa_{1,\ell} = \kappa$ for all $1 \le \ell \le L_1$ and $\kappa_{2,\ell} = \kappa$ for all $1 \le \ell \le L_2$.

	Let $M \ge \hat{k}$ be arbitrary and define
	\begin{align*}
		\tau_n & \coloneqq (1 + 2^{-(n+1)}) \tfrac{T}{2} \in [\tfrac{T}{2},T], \\
		k_n & \coloneqq (2 - 2^{-n}) M \ge \hat{k}, \\
		y_n & \coloneqq \frac{1}{M^2} \int_{-\tau_n}^0 \int_\Omega |u^{(k_n)}(t)|^2, \\
		z_n & \coloneqq \sum_{\ell=1}^{L_1} \Bigl( \int_{-\tau_n}^0 |A_{k_n}(t)|^{\frac{r_{1,\ell}}{q_{1,\ell}}} \Bigr)^{\frac{2}{r_{1,\ell}}}
			+ \sum_{\ell=1}^{L_2} \Bigl( \int_{-\tau_n}^0 |B_{k_n}(t)|^{\frac{r_{2,\ell}}{q_{2,\ell}}} \Bigr)^{\frac{2}{r_{2,\ell}}}
	\end{align*}
	for all $n \in \mathds{N}_0$. We prove that the sequences $(y_n)$ and $(z_n)$ satisfy the inequalities in Lemma~\ref{lem:verblemma}.

	To this end, let $n \in \mathds{N}_0$ be fixed. From~\eqref{eq:aniso} and the trivial estimate
	\[
		|u^{(k_n)}(t)|^2 \ge (k_{n+1} - k_n)^2 \; \setone_{A_{k_{n+1}}(t)}
	\]
	we obtain that
	\begin{equation}\label{eq:yn1est}
	\begin{aligned}
		M^2 \, y_{n+1} & \le c \; \Bigl( \int_{-\tau_{n+1}}^0 |A_{k_{n+1}}(t)| \Bigr)^{\frac{2}{N+2}} \|u^{(k_{n+1})}\|_{Q(\tau_{n+1})}^2 \\
			& \le c \; \bigl( (k_{n+1} - k_n)^{-2} M^2 y_n\bigr)^{\frac{2}{N+2}} \|u^{(k_{n+1})}\|_{Q(\tau_{n+1})}^2 \\
			& \le c \; 2^{2(n+1)} y_n^{\frac{2}{N+2}} \|u^{(k_{n+1})}\|_{Q(\tau_{n+1})}^2.
	\end{aligned}
	\end{equation}
	Similarly,
	\begin{equation}\label{eq:zn1est}
	\begin{aligned}
		2^{-2(n+1)} M^2 z_{n+1}
			& = (k_{n+1} - k_n)^2 z_{n+1} \\
			& \le \sum_{\ell=1}^{L_1} \biggl( \int_{-\tau_{n+1}}^0 \Bigl( \int_\Omega |u^{(k_n)}(t)|^{q_{1,\ell}} \Bigr)^{\frac{r_{1,\ell}}{q_{1,\ell}}} \biggr)^{\frac{2}{r_{1,\ell}}} \\
				& \qquad + \sum_{\ell=1}^{L_2} \biggl( \int_{-\tau_{n+1}}^0 \Bigl( \int_{\partial\Omega} |u^{(k_n)}(t)|^{q_{2,\ell}} \Bigr)^{\frac{r_{2,\ell}}{q_{2,\ell}}} \biggr)^{\frac{2}{r_{2,\ell}}} \\
			& \le c \|u^{(k_n)}\|_{Q(\tau_{n+1})}^2
	\end{aligned}
	\end{equation}
	Moreover, from~\eqref{eq:ukest} applied with $\tau = \tau_n$ and $\sigma = 1 - \frac{\tau_{n+1}}{\tau_n} \ge 2^{-(n+3)}$ we get that
	\begin{equation}\label{eq:un1est}
	\begin{aligned}
		\|u^{(k_{n+1})}\|_{Q(\tau_{n+1})}^2
			& \le \|u^{(k_n)}\|_{Q(\tau_{n+1})}^2
			\le \frac{\gamma}{\sigma \tau_n} \, M^2 \, y_n + \gamma k_n^2 z_n^{1+\kappa} \\
			& \le \gamma M^2 2^{n+4} \bigl( T^{-1} + 1 \bigr) \bigl( y_n + z_n^{1+\kappa} \bigr)
	\end{aligned}
	\end{equation}
	Combining~\eqref{eq:yn1est}, \eqref{eq:zn1est} and~\eqref{eq:un1est} we obtain with
	$\delta \coloneqq \frac{2}{N+2}$ that
	\begin{equation}\label{eq:yzn1est}
		\left\{ \begin{aligned}
			y_{n+1} & \le c_0 \; 2^{3n} \bigl( y_n^{1+\delta} + z_n^{1+\kappa} y_n^\delta \bigr) \\
			z_{n+1} & \le c_0 \; 2^{3n} \bigl( y_n + z_n^{1+\kappa} \bigr)
		\end{aligned} \right.
	\end{equation}
	for all $n \in \mathds{N}_0$.

	Next we want to estimate $y_0$ and $z_0$ for large $M$.
	On the one hand, we have
	\begin{equation}\label{eq:y0est}
		y_0 \le \frac{1}{M^2} \int_{-T}^0 \int_\Omega |u(t)|^2.
	\end{equation}
	On the other hand, similarly to~\eqref{eq:zn1est} and~\eqref{eq:un1est}, we have
	\begin{align*}
		(M - \hat{k})^2 z_0
			& \le \sum_{\ell=1}^{L_1} \biggl( \int_{-\tau_0}^0 \Bigl( \int_\Omega |u^{(\hat{k})}(t)|^{q_{1,\ell}} \Bigr)^{\frac{r_{1,\ell}}{q_{1,\ell}}} \biggr)^{\frac{2}{r_{1,\ell}}} \\
				& \qquad + \sum_{\ell=1}^{L_2} \biggl( \int_{-\tau_0}^0 \Bigl( \int_{\partial\Omega} |u^{(\hat{k})}(t)|^{q_{2,\ell}} \Bigr)^{\frac{r_{2,\ell}}{q_{2,\ell}}} \biggr)^{\frac{2}{r_{2,\ell}}} \\
			& \le c \|u^{(\hat{k})}\|_{Q(\tau_0)}^2 \\
			& \le \frac{4\gamma}{T} \int_{-T}^0 \int_\Omega |u^{(\hat{k})}(t)|^2
				+ \gamma \hat{k}^2 \Bigl( T \bigl|\Omega\bigr|^{\frac{r_1}{q_1}} \Bigr)^{\frac{2(1+\kappa)}{r_1}}
				+ \gamma \hat{k}^2 \Bigl( T \bigl|\partial\Omega\bigr|^{\frac{r_2}{q_2}} \Bigr)^{\frac{2(1+\kappa)}{r_2}},
	\end{align*}
	so that
	\begin{equation}\label{eq:z0est}
		z_0 \le \frac{c_1}{(M - \hat{k})^2} \Bigl( \int_{-T}^0 \int_\Omega |u(t)|^2 + \hat{k}^2 \Bigr)
	\end{equation}
	for all $M \ge \hat{k}$.
	Define $d \coloneqq \min\{ \delta, \frac{\kappa}{1+\kappa} \}$ and
	\[
		\lambda \coloneqq \min\bigl\{ (2c_0)^{-\frac{1}{\delta}} 2^{-\frac{3}{\delta d}}, (2c_0)^{-\frac{1+\kappa}{\kappa}} 2^{-\frac{3}{\kappa d}} \bigr\}.
	\]
	Then for
	\begin{equation}\label{eq:Mdef}
	\begin{aligned}
		M & \coloneqq \max\biggl\{ \lambda^{-\frac{1}{2}} \Bigl( \int_{-T}^0 \int_\Omega |u(t)|^2 \Bigr)^{\frac{1}{2}}, \;
			\hat{k} + \lambda^{\frac{-1}{2(1+\kappa)}} c_1^{\frac{1}{2}}\Bigl( \int_{-T}^0 \int_\Omega |u(t)|^2 + \hat{k}^2 \Bigr)^{\frac{1}{2}} \biggr\} \\
			& \le c_2 \, \Bigl( \int_{-T}^0 \int_\Omega |u(t)|^2 + \hat{k}^2 \Bigr)^{\frac{1}{2}}
	\end{aligned}
	\end{equation}
	we obtain from~\eqref{eq:y0est} and~\eqref{eq:z0est} that
	\begin{equation}\label{eq:yz0est}
		\left\{ \begin{aligned}
			y_0 & \le \lambda \\
			z_0 & \le \lambda^{\frac{1}{1+\kappa}}.
		\end{aligned} \right.
	\end{equation}

	Estimates~\eqref{eq:yzn1est} and~\eqref{eq:yz0est} show in view of Lemma~\ref{lem:verblemma} that
	$z_n \to 0$ as $n \to \infty$, which implies that
	$u(t) \le \lim_{n \to \infty} k_n = 2M$ almost everywhere on $\Omega$ for almost every
	$t \in \bigcap_{n \in \mathds{N}} [-\tau_n, 0] = [-\frac{T}{2}, 0]$ if we define $M$
	as in~\eqref{eq:Mdef}. This is~\eqref{eq:LSUest}.
\end{proof}

Theorem~\ref{thm:LSUthm1} is a local estimate in time therefore allows us to estimate
the solution of $(P_{u_0,f,g})$ independently of the initial value $u_0$.
The price is that we obtain estimates only away from $t=0$. We also need the following
modification of Theorem~\ref{thm:LSUthm1} that gives good estimates for small $t$.

\begin{corollary}\label{cor:LSUthm2}
	In the situation of Theorem~\ref{thm:LSUthm1}, assume that instead of~\eqref{eq:ukest} we even have
	\begin{align*}
		\|u^{(k)}\|_{Q(T)}^2
			& \le \gamma \int_{-T}^0 \int_\Omega |u^{(k)}(t)|^2
				+ \gamma k^2 \sum_{\ell=1}^{L_1} \Bigl( \int_{-T}^0 |A_k(t)|^{\frac{r_{1,\ell}}{q_{1,\ell}}} \Bigr)^{\frac{2(1+\kappa_{1,\ell})}{r_{1,\ell}}} \\
				& \qquad + \gamma k^2 \sum_{\ell=1}^{L_2} \Bigl( \int_{-T}^0 |B_k(t)|^{\frac{r_{2,\ell}}{q_{2,\ell}}} \Bigr)^{\frac{2(1+\kappa_{2,\ell})}{r_{2,\ell}}}
	\end{align*}
	for all $k \ge \hat{k}$.
	Then
	\[
		\esssup_{t \in [-T,0], \, x \in \Omega} u(t,x) \le c \Bigl( \int_0^T \int_\Omega |u(t)|^2 + \hat{k}^2 \Bigr)^{\frac{1}{2}}
	\]
	for all $t \in [-T,0]$, where the constant $c \ge 0$ is independent of $u$ and $\hat{k}$.
\end{corollary}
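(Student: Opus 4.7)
The plan is to repeat the proof of Theorem~\ref{thm:LSUthm1} essentially verbatim, but with the crucial simplification that the shrinking sequence $\tau_n = (1 + 2^{-(n+1)})\tfrac{T}{2}$ is replaced by the constant sequence $\tau_n \equiv T$. The stronger hypothesis of Corollary~\ref{cor:LSUthm2} eliminates the $\sigma\tau$-factor that previously forced us to sacrifice a portion of the time interval at each step of the iteration, so the entire Moser-type argument can be carried out on the full interval $[-T,0]$ and the supremum estimate extends to all of $[-T,0] \times \Omega$.

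Concretely, as in the proof of Theorem~\ref{thm:LSUthm1}, we first reduce all $\kappa_{i,\ell}$ to their common minimum $\kappa$ at the cost of enlarging $\gamma$. For $M \ge \hat{k}$ to be chosen later set $k_n \coloneqq (2 - 2^{-n})M$ and define
\[
y_n \coloneqq \frac{1}{M^2} \int_{-T}^0 \int_\Omega |u^{(k_n)}(t)|^2,
\qquad
z_n \coloneqq \sum_{\ell=1}^{L_1} \Bigl( \int_{-T}^0 |A_{k_n}(t)|^{\frac{r_{1,\ell}}{q_{1,\ell}}} \Bigr)^{\frac{2}{r_{1,\ell}}} + \sum_{\ell=1}^{L_2} \Bigl( \int_{-T}^0 |B_{k_n}(t)|^{\frac{r_{2,\ell}}{q_{2,\ell}}} \Bigr)^{\frac{2}{r_{2,\ell}}}.
\]
Exactly the computations~\eqref{eq:yn1est} and~\eqref{eq:zn1est} remain valid with $\tau_{n+1}$ replaced by $T$ and give
\[
M^2 y_{n+1} \le c \, 2^{2(n+1)} \, y_n^{\frac{2}{N+2}} \, \|u^{(k_{n+1})}\|_{Q(T)}^2,
\qquad
M^2 z_{n+1} \le c \, 2^{2(n+1)} \, \|u^{(k_n)}\|_{Q(T)}^2.
\]
The key replacement for~\eqref{eq:un1est} is that the new hypothesis, applied to the function $u^{(k_n)}$, yields directly
\[
\|u^{(k_{n+1})}\|_{Q(T)}^2 \le \|u^{(k_n)}\|_{Q(T)}^2 \le \gamma M^2 y_n + \gamma k_n^2 z_n^{1+\kappa} \le \gamma M^2 (y_n + 4 z_n^{1+\kappa}),
\]
without any $\sigma^{-1}$ blow-up. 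Combining these three estimates produces the recursion
\[
y_{n+1} \le c_0 \, 2^{2n} \, \bigl( y_n^{1+\delta} + z_n^{1+\kappa} y_n^{\delta} \bigr),
\qquad
z_{n+1} \le c_0 \, 2^{2n} \, \bigl( y_n + z_n^{1+\kappa} \bigr),
\]
with $\delta = \tfrac{2}{N+2}$, which is exactly the form required by Lemma~\ref{lem:verblemma} (now with $b = 4$ instead of $b = 8$).

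For the initialization we repeat~\eqref{eq:y0est} and~\eqref{eq:z0est} on the fixed interval $[-T,0]$: applying the hypothesis once to $u^{(\hat{k})}$ gives
\[
y_0 \le \frac{1}{M^2} \int_{-T}^0 \int_\Omega |u(t)|^2,
\qquad
z_0 \le \frac{c_1}{(M - \hat{k})^2} \Bigl( \int_{-T}^0 \int_\Omega |u(t)|^2 + \hat{k}^2 \Bigr).
\]
With the threshold $\lambda$ supplied by Lemma~\ref{lem:verblemma} for the constants above, we choose
\[
M \coloneqq c_2 \, \Bigl( \int_{-T}^0 \int_\Omega |u(t)|^2 + \hat{k}^2 \Bigr)^{\frac{1}{2}}
\]
with $c_2$ large enough to ensure $y_0 \le \lambda$ and $z_0 \le \lambda^{1/(1+\kappa)}$. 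Lemma~\ref{lem:verblemma} then delivers $z_n \to 0$, and since now $\tau_n \equiv T$ we conclude that $u(t,x) \le \lim_{n \to \infty} k_n = 2M$ for almost every $(t,x) \in [-T,0] \times \Omega$, which is the asserted estimate.

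I do not expect a genuine obstacle: the new hypothesis is tailored to avoid precisely the one step in the proof of Theorem~\ref{thm:LSUthm1} where shrinking the time interval was unavoidable, and the rest of the De Giorgi iteration is purely algebraic and goes through unchanged. The only point that requires minor care is verifying that the reduction of the $\kappa_{i,\ell}$ to a single $\kappa$ and the use of $(a+b)^{1+\kappa} \ge a^{1+\kappa} + b^{1+\kappa}$ for non-negative numbers still allow the boundary and interior sums to be absorbed into $z_n^{1+\kappa}$, exactly as in the original argument.
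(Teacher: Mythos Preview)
Your proposal is correct and matches the paper's own proof, which consists of the single observation that setting $\tau_n \coloneqq T$ for all $n$ makes the argument of Theorem~\ref{thm:LSUthm1} carry over verbatim, with $\bigcap_{n} [-\tau_n,0] = [-T,0]$ yielding the global-in-time conclusion. Your write-up simply spells out the details of this sketch, including the harmless change of the base $b$ in Lemma~\ref{lem:verblemma} from $8$ to $4$ once the $\sigma^{-1}$ factor disappears.
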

\begin{proof}
	The proof is very similar to the one of Theorem~\ref{thm:LSUthm1}. In fact, we only have to notice
	that after changing the definition of $\tau_n$ to $\tau_n \coloneqq T$ for all $n \in \mathds{N}$
	the rest of the proof carries over verbatim with the mere exception that this time
	we have $\bigcap_{n \in \mathds{N}} [-\tau_n,0] = [-T,0]$, which gives the result.
\end{proof}

Before we can check that Theorem~\ref{thm:LSUthm1} applies to the solutions of $(P_{u_0,f,g})$,
we have to supply the following tool for the calculations.

\begin{lemma}\label{lem:localderiv}
	Let $T > 0$, $u \in H^1(0,T;L^2(\Omega)) \cap L^2(0,T;H^1(\Omega))$ and $k \ge 0$. Define $u^{(k)}(t) \coloneqq (u(t) - k)^+$
	for $t \ge 0$. Then $u^{(k)} \in H^1(0,T; L^2(\Omega) ) \cap L^2(0,T; H^1(\Omega) )$ with derivative
	$(u^{(k)})_t(t) = u_t(t) \, \setone_{\lbrace u(t) > k \rbrace}$ and
	$\nabla u^{(k)}(t) = \nabla u(t) \, \setone_{\lbrace u(t) > k \rbrace}$.
	Moreover, $u^{(k)}(t)|_{\partial\Omega} = (u|_{\partial\Omega}(t) - k)^+$.
\end{lemma}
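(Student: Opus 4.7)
The plan is to reduce the lemma to the classical chain rule for Sobolev functions (Stampacchia's theorem) applied on the space-time cylinder. The key preliminary observation is the identification
\[
	H^1(0,T; L^2(\Omega)) \cap L^2(0,T; H^1(\Omega)) = H^1((0,T) \times \Omega),
\]
which holds by Fubini (identifying the Bochner derivative with the distributional derivative in the $t$-direction) together with the density of smooth functions in both sides. Under this identification, the time and space derivatives of $u$ coincide with the respective partial derivatives of $u$ viewed as a function on the cylinder.

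First I would invoke Stampacchia's chain rule applied to the Lipschitz function $F(s) \coloneqq (s-k)^+$, which satisfies $F(0) = 0$ since $k \ge 0$: for every $w \in H^1((0,T) \times \Omega)$ one has $F(w) \in H^1((0,T) \times \Omega)$ with $\partial_\alpha F(w) = \partial_\alpha w \cdot \setone_{\lbrace w > k \rbrace}$ almost everywhere, for every coordinate $\alpha$. Applied to $u$ this gives $u^{(k)} \in H^1((0,T) \times \Omega)$ together with the identities $(u^{(k)})_t = u_t \, \setone_{\lbrace u > k \rbrace}$ and $\nabla u^{(k)} = \nabla u \, \setone_{\lbrace u > k \rbrace}$ a.e. on $(0,T) \times \Omega$. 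Passing back through the identification above then yields $u^{(k)} \in H^1(0,T; L^2(\Omega)) \cap L^2(0,T; H^1(\Omega))$ with precisely the derivatives claimed in the statement.

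For the trace assertion I would argue slice by slice. Since $u \in L^2(0,T; H^1(\Omega))$, we have $u(t) \in H^1(\Omega)$ for almost every $t \in (0,T)$, and it therefore suffices to prove the scalar identity $(v - k)^+|_{\partial\Omega} = (v|_{\partial\Omega} - k)^+$ for every $v \in H^1(\Omega)$. This follows by approximating $v$ in $H^1(\Omega)$ by a sequence $(v_n) \subset \mathrm{C}^\infty(\overline{\Omega})$, for which the identity is obvious pointwise on $\partial\Omega$, and passing to the limit using the continuity of the trace operator $H^1(\Omega) \to L^2(\partial\Omega)$ together with the fact that $s \mapsto (s-k)^+$ is a contraction on $L^2$, which preserves both sides of the identity under $L^2$-limits.

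The main technical point is the Stampacchia chain rule itself; the only subtlety is that $F$ fails to be differentiable at $s = k$, but this is harmless because $\nabla_{t,x} u$ vanishes almost everywhere on the level set $\lbrace u = k \rbrace$, so the expressions $u_t \, \setone_{\lbrace u > k \rbrace}$ and $\nabla u \, \setone_{\lbrace u > k \rbrace}$ are unambiguously defined as elements of $L^2((0,T) \times \Omega)$ and $L^2((0,T) \times \Omega)^N$, respectively. A standard way to obtain the chain rule rigorously is to approximate $F$ by smooth Lipschitz functions $F_\eps$ with $F_\eps' \to \setone_{(k,\infty)}$ pointwise and $|F_\eps'| \le 1$, apply the classical smooth chain rule to $F_\eps(u)$, and pass to the limit using dominated convergence.
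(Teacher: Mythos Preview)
Your proposal is correct and follows essentially the same approach as the paper: identify $H^1(0,T;L^2(\Omega)) \cap L^2(0,T;H^1(\Omega))$ with $H^1((0,T)\times\Omega)$, apply the chain rule for weakly differentiable functions (the paper cites \cite[Theorem~7.8]{GT01}, which is precisely the Stampacchia-type result you invoke), and obtain the trace identity by approximation with continuous functions. Your write-up is in fact more detailed than the paper's two-sentence proof.
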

\begin{proof}
	After identifying $H^1(0,T;L^2(\Omega)) \cap L^2(0,T;H^1(\Omega))$ with $H^1( (0,T) \times \Omega )$
	up to equivalent norms in the obvious way, the formulas for the derivatives follow from
	the chain rule for weakly differentiable functions, see for example~\cite[Theorem~7.8]{GT01}.
	The assertion about the trace is true for continuous functions and thus by approximation for
	all functions under consideration.
\end{proof}

We now prove Proposition~\ref{prop:Linfbound} for classical $L^2$-solutions.
Basically, we will check that every solution of $(P_{u_0,f,g})$ satisfies~\eqref{eq:ukest}.

\begin{lemma}\label{lem:Linfbound}
	Proposition~\ref{prop:Linfbound} holds if in addition we assume that $u$ is a classical $L^2$-solution
	and $T \le T_0$, where $T_0 > 0$ depends
	only on $N$, $\Omega$, $r_1$, $q_1$, $r_2$, $q_2$ and the coefficients of the equation.
\end{lemma}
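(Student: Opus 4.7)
The plan is to verify that the shifted solution satisfies the hypothesis~\eqref{eq:ukest} of Theorem~\ref{thm:LSUthm1} (respectively, the hypothesis of Corollary~\ref{cor:LSUthm2} for the global estimate under $u_0=0$), with $\hat{k}$ chosen proportional to the norms of $f$ and $g$. The bound on $u$ then follows immediately from Theorem~\ref{thm:LSUthm1}; applying the same reasoning to $-u$ (which solves the analogous problem with data $-u_0,-f,-g$) gives the matching lower bound and hence the full $L^\infty$ estimate. We translate $u$ to the interval $[-T,0]$ so that the setup of Theorem~\ref{thm:LSUthm1} applies verbatim.

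The main computation is an energy inequality for $u^{(k)}(t)\coloneqq(u(t)-k)^+$, $k\ge 0$. Since $u$ is a classical $L^2$-solution, Remark~\ref{rem:formform} gives $\int_\Omega u_t\phi+a_\beta(u,\phi)=\int_\Omega f\phi+\int_{\partial\Omega}g\phi$ for each $t$ and $\phi\in H^1(\Omega)$. I test with $\phi=u^{(k)}(t)\,\eta(t)^2$, where $\eta\in\mathrm{C}^1([-\tau,0])$ is a nonnegative cutoff with $\eta(-\tau)=0$, $\eta\equiv 1$ on $[-(1-\sigma)\tau,0]$, and $\|\eta_t\|_\infty\le 2/(\sigma\tau)$. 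Lemma~\ref{lem:localderiv} handles $\nabla u^{(k)}=\nabla u\,\setone_{\{u>k\}}$ and the trace. Integrating the time derivative by parts, using that on $\{u>k\}$ one has $u=u^{(k)}+k$, and splitting $a_\beta(u,u^{(k)})=a_\beta(u^{(k)},u^{(k)})+k\cdot[\text{terms in }b_j,c_i,d,\beta]$, I arrive at an estimate of the form
\[
  \|u^{(k)}\|_{Q((1-\sigma)\tau)}^2 \;\le\; \Bigl(\tfrac{C}{\sigma\tau}+C\omega\Bigr)\int_{-\tau}^0\!\!\int_\Omega |u^{(k)}|^2 \;-\; \tfrac{\mu}{2}\int_{-\tau}^0\!\!\int_\Omega |\nabla u^{(k)}|^2 \;+\; R(k),
\]
where $R(k)$ collects all terms arising from $b,c,d,\beta,f,g$, each integrated against $u^{(k)}$ or $\nabla u^{(k)}$ and supported in $A_k(t)$ or $B_k(t)$.

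The remaining step is to show $R(k)$ has the shape of the right-hand side of~\eqref{eq:ukest}. Each lower-order contribution is treated by Young's inequality to absorb $|\nabla u^{(k)}|^2$ into the LHS via $\mu$, followed by Hölder's inequality in space and in time. For the coefficients $b,c\in L^q(\Omega)$, $d\in L^{q/2}(\Omega)$, $\beta\in L^{q-1}(\partial\Omega)$ with $q>N$, this produces terms like $k^2\,\|b\|_{L^q}^2\,|A_k(t)|^{1-2/q}$, and integration in time against the time cutoff yields an exponent strictly greater than the critical one in~\eqref{eq:qrcond}, which defines the positive $\kappa_{1,\ell}$ and $\kappa_{2,\ell}$; this is precisely where $q>N$ is used. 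For the $f,g$ terms the standard trick is to set
\[
  \hat{k}\;\coloneqq\;\|f\|_{L^{r_1}(0,T;L^{q_1}(\Omega))}+\|g\|_{L^{r_2}(0,T;L^{q_2}(\partial\Omega))},
\]
so that for $k\ge\hat{k}$ the estimate $|f|\cdot u^{(k)}\le\tfrac{1}{2}|u^{(k)}|^2+\tfrac{1}{2}|f|^2\setone_{A_k}$ combined with Hölder on $A_k$ yields $\int\int |f|^2\setone_{A_k}\le\hat{k}^2\cdot|A_k|^{\ast}\le k^2\cdot|A_k|^{\ast}$, which is exactly the form demanded by~\eqref{eq:ukest}; the analogue holds for $g$ on $B_k$. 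The smallness assumption $T\le T_0$ is needed to absorb the constant $C\omega$ into $\frac{\gamma}{\sigma\tau}$ (since $\sigma\tau\le T_0$), giving the clean form of~\eqref{eq:ukest}.

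For the global bound under $u_0=0$, after translation one has $u(-T)=0$, so testing with $\eta\equiv 1$ removes the initial trace term and, crucially, the $\frac{1}{\sigma\tau}$ factor. The resulting inequality is exactly the hypothesis of Corollary~\ref{cor:LSUthm2}, which yields~\eqref{eq:Linfboundglob}. The main obstacle is the bookkeeping in Step 3: each of the several lower-order and inhomogeneity terms must be estimated so as to produce powers of $|A_k|$ and $|B_k|$ compatible with~\eqref{eq:qrcond} while retaining strictly positive exponents $\kappa_{i,\ell}$; this is essentially a Hölder-exponent accounting exercise, where the strict inequalities in~\eqref{eq:qrrel} (for $f,g$) and in $q>N$ (for $b,c,d,\beta$) provide the required slack.
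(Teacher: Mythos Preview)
Your overall strategy---translate to $[-T,0]$, derive an energy inequality for $u^{(k)}$ with a time cutoff, verify~\eqref{eq:ukest}, and apply Theorem~\ref{thm:LSUthm1} (resp.\ Corollary~\ref{cor:LSUthm2})---matches the paper's. However, there is a genuine gap in your treatment of the inhomogeneities, and it is tied to a misidentification of why $T\le T_0$ is needed.

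The step $|f|\,u^{(k)}\le \tfrac12|u^{(k)}|^2+\tfrac12|f|^2\setone_{A_k}$ squares $f$ and thereby halves its integrability. Carrying out H\"older on $\int_{-\tau}^0\!\int_{A_k}|f|^2$ and matching the result to the form $k^2\bigl(\int_{-\tau}^0|A_k|^{r/q}\bigr)^{2(1+\kappa)/r}$ with $\tfrac{1}{r}+\tfrac{N}{2q}=\tfrac{N}{4}$ forces
\[
\kappa=\frac{2}{N}\Bigl(1-\frac{2}{r_1}-\frac{N}{q_1}\Bigr),
\]
which is positive only when $\tfrac{1}{r_1}+\tfrac{N}{2q_1}<\tfrac12$. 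The hypothesis~\eqref{eq:qrrel} gives merely $\tfrac{1}{r_1}+\tfrac{N}{2q_1}<1$; for instance $N=3$, $r_1=q_1=3$ satisfies~\eqref{eq:qrrel} but yields $\kappa<0$, so the De~Giorgi iteration cannot close. The same issue appears for $g$, and your Young-type handling of the $d$ and $\beta$ cross-terms runs into analogous integrability losses (e.g.\ $|d|^2$ need not be integrable when $q\le 4$).

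The paper avoids this by writing $|f|\,u^{(k)}\le \tfrac{|f|}{2k}\bigl(|u^{(k)}|^2+k^2\bigr)$ (and similarly $k|d|\,u^{(k)}\le \tfrac{|d|}{2}(|u^{(k)}|^2+k^2)$, etc.), keeping $f$ to the first power. The $k^2$-part then gives, after a single H\"older step and using $k\ge\hat k\ge\|f\|$, exactly the exponents $r_{1,1}=2(1+\kappa_1)\tfrac{r_1}{r_1-1}$, $q_{1,1}=2(1+\kappa_1)\tfrac{q_1}{q_1-1}$ with $\kappa_1=\tfrac{2}{N}\bigl(1-\tfrac{1}{r_1}-\tfrac{N}{2q_1}\bigr)>0$ under~\eqref{eq:qrrel}. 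The price is the remaining terms $\int_{-\tau}^0\!\int_{A_k}\bigl(\tfrac{|f|}{k}+\mathcal D_0\bigr)\,\zeta^2|u^{(k)}|^2$ and the boundary analogue, which cannot be pushed into the $\tfrac{\gamma}{\sigma\tau}\!\int\!\int|u^{(k)}|^2$ slot. Instead one bounds them via H\"older and the anisotropic Sobolev inequality~\eqref{eq:aniso} by $c(\tau)\,\|\zeta u^{(k)}\|_{Q(\tau)}^2$ with $c(\tau)\to 0$ as $\tau\to 0$, and absorbs them into the left-hand side. \emph{This} absorption is where the smallness $T\le T_0$ enters, not the constant $C\omega$ you mention; the latter can always be merged into $\tfrac{\gamma}{\sigma\tau}$ by enlarging $\gamma$ (depending on $T$) without any smallness assumption.
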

\begin{proof}
	After a linear substitution in the time variable we may consider problem $(P_{u_0,f,g})$ on $[-T,0]$
	instead of $[0,T]$, the initial value now being $u_0 = u(-T)$.
	We check the conditions of Theorem~\ref{thm:LSUthm1} with
	\begin{equation}\label{eq:khat}
		\hat{k}^2 \coloneqq \|f\|_{L^{r_1}(-T,0;L^{q_1}(\Omega))}^2 + \|g\|_{L^{r_2}(-T,0;L^{q_2}(\Omega))}^2.
	\end{equation}

	Fix $0 < \tau \le T$ and let $\zeta$ be a function in $H^1(-\tau,0)$ satisfying $0 \le \zeta(t) \le 1$ for all $t \in [-\tau,0]$.
	Assume either that $\zeta(-\tau) = 0$ or that $\tau = T$ and $u^{(k)}(-T) = 0$.
	Then for $t \in [-\tau,0]$ we have
	\begin{equation}\label{eq:deGiest}
		\begin{aligned}
			\zeta(t)^2 \cdot \frac{1}{2} \int_\Omega |u^{(k)}(t)|^2
				& = \int_{-\tau}^t \frac{\dx}{\dx s} \Bigl( \zeta(s)^2 \cdot \frac{1}{2} \int_\Omega |u^{(k)}(s)|^2 \Bigr) \\
				& = \int_{-\tau}^t \zeta(s) \zeta'(s) \int_\Omega |u^{(k)}(s)|^2 + \int_{-\tau}^t \zeta(s)^2 \int_\Omega u^{(k)}_t(s) \; u^{(k)}(s).
		\end{aligned}
	\end{equation}
	From Lemma~\ref{lem:localderiv} and the fact that $u$ is a classical $L^2$-solution of $(P_{u_0,f,g})$
	we obtain that for all $s \in [-\tau,0]$ we have
	\begin{equation}\label{eq:Auterm}
		\begin{aligned}
			\int_\Omega u^{(k)}_t(s) \; u^{(k)}(s)
				& = \int_\Omega u_t(s) \; u^{(k)}(s)
				= \int_\Omega \bigl( Au(s) + f(s) \bigr) \; u^{(k)}(s) \\
				& = \int_\Omega f(s) \, u^{(k)}(s) + \int_{\partial\Omega} g(s) \, u^{(k)}(s) - a_\beta(u(s), u^{(k)}(s)).
		\end{aligned}
	\end{equation}

	We now estimate the right hand side of~\eqref{eq:Auterm}.
	From Lemma~\ref{lem:localderiv}, \eqref{eq:aelliptic} and Young's inequality we obtain that
	\begin{align*}
		& a_\beta(u(s), u^{(k)}(s)) \\
			& \quad = a_\beta(u^{(k)}(s), u^{(k)}(s)) + \sum_{j=1}^N \int_\Omega b_j \; k D_ju^{(k)}(s)
				+ \int_\Omega d \, k u^{(k)}(s) + \int_{\partial\Omega} \beta \, k u^{(k)}(s) \\
			& \quad \ge \frac{\mu}{2} \int_\Omega |\nabla u^{(k)}(s)|^2 - \omega \int_\Omega |u^{(k)}(s)|^2
				- \frac{k^2}{\mu} \sum_{j=1}^N \int_{A_k(s)} |b_j|^2 - \frac{\mu}{4} \int_\Omega |\nabla u^{(k)}(s)|^2 \\
				& \qquad -  \int_{A_k(s)} |d| \; \bigl( |u^{(k)}(s)|^2 + k^2 \bigr)
				- \int_{B_k(s)} |\beta| \; \bigl( |u^{(k)}(s)|^2 + k^2 \bigr).
	\end{align*}
	Using~\eqref{eq:Auterm} and again Young's inequality this gives
	\begin{align}
		\int_\Omega u^{(k)}_t(s) \; u^{(k)}(s)
			& \le -\frac{\mu}{4} \int_\Omega |\nabla u^{(k)}(s)|^2
				+ \int_{A_k(s)} \bigl( \tfrac{1}{k} |f(s)| + \mathcal{D}_0 \bigr) \bigl( |u^{(k)}(s)|^2 + k^2 \bigr) \nonumber \\
				& \qquad + \int_{B_k(s)} \bigl( \tfrac{1}{k} |g(s)| + |\beta| \bigr) \bigl( |u^{(k)}(s)|^2 + k^2 \bigr) \label{eq:utuest}
	\end{align}
	with
	\[
		\mathcal{D}_0 \coloneqq \omega + \frac{1}{\mu} \sum_{j=1}^N |b_j|^2 + |d| \in L^{\frac{q}{2}}(\Omega),
	\]
	where $q > N$.
	Plugging~\eqref{eq:utuest} into~\eqref{eq:deGiest} and varying over $t$ we arrive at the estimate
	\begin{equation}\label{eq:fullest1}
		\begin{aligned}
			& \min\{ \tfrac{1}{2}, \tfrac{\mu}{4} \} \| \zeta u^{(k)}\|_{Q(\tau)}^2 \\
				& \qquad \le \sup_{-\tau \le t \le 0} \Bigl( \zeta(t)^2 \cdot \frac{1}{2} \int_\Omega |u^{(k)}(t)|^2 \Bigr) + \frac{\mu}{4} \int_{-\tau}^0 \zeta(s)^2 \int_\Omega |\nabla u^{(k)}|^2 \\
				& \qquad \le \|\zeta'\|_{L^\infty(-\tau,0)} \int_{-\tau}^0 \int_\Omega |u^{(k)}(s)|^2 \\
					& \qquad\qquad + \int_{-\tau}^0 \int_{A_k(s)} \bigl( \tfrac{1}{k} |f(s)| + \mathcal{D}_0 \bigr) \cdot \bigl( \zeta(s)^2 |u^{(k)}(s)|^2 + k^2 \bigr)  \\
					& \qquad\qquad + \int_{-\tau}^0 \int_{B_k(s)} \bigl( \tfrac{1}{k} |g(s)| + |\beta| \bigr) \cdot \bigl( \zeta(s)^2 |u^{(k)}(s)|^2 + k^2 \bigr) 
		\end{aligned}
	\end{equation}

	We estimate the right hand side of~\eqref{eq:fullest1}. Define $\kappa_1 > 0$ and $\kappa_2 > 0$ by
	\begin{equation}\label{eq:qrrelk}
		\frac{1}{r_1} + \frac{N}{2q_1} = 1 - \frac{\kappa_1 N}{2}
		\qquad\text{and}\qquad
		\frac{1}{r_2} + \frac{N-1}{2q_2} = \frac{1}{2} - \frac{\kappa_2 N}{2}.
	\end{equation}
	With $\bar{r}_1 \coloneqq \frac{2r_1}{r_1 - 1}$ and $\bar{q}_1 \coloneqq \frac{2q_1}{q_1 - 1}$ we obtain from
	H\"older's inequality that
	\begin{align*}
		& \int_{-\tau}^0 \int_{A_k(s)} |f(s)| \cdot \zeta(s)^2 |u^{(k)}(s)|^2 \\
			& \qquad \le \|f\|_{L^{r_1}(-\tau,0;L^{q_1}(\Omega))}
					\| \zeta u^{(k)} \|_{L^{\bar{r}_1}(-\tau,0;L^{\bar{q}_1}(\Omega))}^2 \\
			& \qquad \le \hat{k} \| \zeta u^{(k)} \|_{L^{(1+\kappa_1)\bar{r}_1}(-\tau,0;L^{(1+\kappa)\bar{q}_1}(\Omega))}^2
				\|\setone_{A_k}\|_{L^{\frac{\kappa_1+1}{\kappa_1}\bar{r}_1}(-\tau,0; L^{\frac{\kappa_1+1}{\kappa_1}\bar{q}_1}(\Omega))}^2
	\end{align*}
	The last factor tends to zero as $\tau \to 0$.
	Since moreover $\frac{1}{(1+\kappa_1)\bar{r}_1} + \frac{N}{2(1+\kappa_1)\bar{q}_1} = \frac{N}{4}$ by~\eqref{eq:qrrelk},
	we deduce from~\eqref{eq:aniso} that
	\[
		\int_{-\tau}^0 \int_{A_k(s)} |f(s)| \cdot \zeta(s)^2 |u^{(k)}(s)|^2
			\le \frac{\hat{k}}{8} \min\{ \tfrac{1}{2}, \tfrac{\mu}{4} \} \|\zeta u^{(k)}\|_{Q(\tau)}^2
	\]
	if $\tau$ is sufficiently small, say $\tau \le T_0$, where $T_0$ depends on $\mu$, $N$, $\Omega$, $\kappa_1$, $r_1$, $q_1$.
	Similarly, since $\frac{2q}{q-2} < \frac{2N}{N-2}$ we obtain that
	\begin{align*}
		\int_{-\tau}^0 \int_{A_k(s)} \mathcal{D}_0 \cdot \zeta(s)^2 |u^{(k)}(s)|^2
			& \le \|\mathcal{D}_0\|_{L^{\frac{q}{2}}(\Omega)} \|\zeta u^{(k)}\|_{L^2(-\tau,0;L^{\frac{2q}{q-2}}(\Omega))}^2 \\
			& \le \frac{1}{8} \min\{ \tfrac{1}{2}, \tfrac{\mu}{4} \} \|\zeta u^{(k)}\|_{Q(\tau)}^2
	\end{align*}
	for $\tau \le T_0$ with some possibly smaller $T_0 > 0$ that depends in addition on $\mathcal{D}_0$ and $q$.

	Analogously, with $\bar{r}_2 \coloneqq \frac{2r_2}{r_2-1}$ and $\bar{q}_2 \coloneqq \frac{2q_2}{q_2-1}$ we have
	\begin{align*}
		& \int_{-\tau}^0 \int_{B_k(s)} |g(s)| \cdot \zeta(s)^2 |u^{(k)}(s)|^2 \\
			& \qquad \le \hat{k} \| \zeta u^{(k)} \|_{L^{(1+\kappa_2)\bar{r}_2}(-\tau,0;L^{(1+\kappa_2)\bar{q}_2}(\partial\Omega))}^2
				\|\setone_{B_k}\|_{L^{\frac{\kappa_1+1}{\kappa_1}\bar{r}_2}(-\tau,0; L^{\frac{\kappa_1+1}{\kappa_1}\bar{q}_2}(\partial\Omega))}^2 \\
			& \qquad \le \frac{\hat{k}}{8} \min\{ \tfrac{1}{2}, \tfrac{\mu}{4} \} \|\zeta u^{(k)}\|_{Q(\tau)}^2
	\end{align*}
	and since $\frac{2(q-1)}{q-2} < \frac{2(N-1)}{N-2}$ also
	\begin{align*}
		\int_{-\tau}^0 \int_{B_k(s)} |\beta| \cdot \zeta(s)^2 |u^{(k)}(s)|^2
			& \le \|\beta\|_{L^{q-1}(\partial\Omega)} \|\zeta u^{(k)}\|_{L^2(-\tau,0; L^{\frac{2(q-1)}{q-2}}(\partial\Omega))}^2 \\
			& \le \frac{1}{8} \min\{ \tfrac{1}{2}, \tfrac{\mu}{4} \} \|\zeta u^{(k)}\|_{Q(\tau)}^2
	\end{align*}
	for $\tau \le T_0$, where this new $T_0$ depends also on $r_2$, $q_2$, $\kappa_2$ and $\beta$.

	Combining the latter estimates with~\eqref{eq:fullest1} we obtain that
	\begin{align}
		\|\zeta u^{(k)}\|_{Q(\tau)}^2
			& \le c_\mu \|\zeta'\|_{L^\infty(-\tau,0)} \int_{-\tau}^0 \int_\Omega |u^{(k)}(s)|^2
				+ c_\mu k^2 \int_{-\tau}^0 \int_{A_k(s)} \bigl( \tfrac{1}{k} |f(s)| + \mathcal{D}_0 \bigr) \nonumber \\
				& \qquad + c_\mu k^2 \int_{-\tau}^0 \int_{B_k(s)} \bigl( \tfrac{1}{k} |g(s)| + |\beta| \bigr)  \label{eq:fullest2}
	\end{align}
	if $\tau \le T_0$ and $k \ge \hat{k}$, where $c_\mu$ depends only on $\mu$.

	Now we estimate for $k \ge \hat{k}$
	\begin{align*}
		\int_{-\tau}^0 \int_{A_k(s)} \tfrac{1}{k} |f(s)|
			& \le \tfrac{1}{k} \|f\|_{L^{r_1}(-\tau,0; L^{q_1}(\Omega))} \|\setone_{A_k}\|_{L^{\frac{r_1}{r_1-1}}(-\tau,0; L^{\frac{q_1}{q_1-1}}(\Omega))} \\
			& \le \|\setone_{A_k}\|_{L^{\frac{r_1}{r_1-1}}(-\tau,0; L^{\frac{q_1}{q_1-1}}(\Omega))} \\
			& = \|\setone_{A_k}\|_{L^{r_{1,1}}(-\tau,0; L^{q_{1,1}}(\Omega))}^{2(1+\kappa_{1,1})},
	\end{align*}
	with $\kappa_{1,1} \coloneqq \kappa_1$, $r_{1,1} \coloneqq 2(1+\kappa_1) \frac{r_1}{r_1-1}$
	and $q_{1,1} \coloneqq 2(1+\kappa_1) \frac{q_1}{q_1-1}$
	and similarly
	\begin{align*}
		\int_{-\tau}^0 \int_{A_k(s)} \mathcal{D}_0
			& \le \|\mathcal{D}_0\|_{L^{\frac{q}{2}}(\Omega)} \; \|\setone_{A_k}\|_{L^1(-\tau,0; L^{\frac{q}{q-2}}(\Omega))} \\
			& = \|\mathcal{D}_0\|_{L^{\frac{q}{2}}(\Omega)} \; \|\setone_{A_k}\|_{L^{r_{1,2}}(-\tau,0; L^{q_{1,2}}(\Omega))}^{2(1+\kappa_{1,2})}
	\end{align*}
	with $\kappa_{1,2} \coloneqq \frac{2(q-N) + (q-2)N}{qN}$,
	$r_{1,2} \coloneqq 2(1+\kappa_{1,2})$ and $q_{1,2} \coloneqq 2(1+\kappa_{1,2}) \frac{q}{q-2}$.
	Analogously,
	\[
		\int_{-\tau}^0 \int_{B_k(s)} \tfrac{1}{k} |g(s)|
			\le \|\setone_{B_k}\|_{L^{r_{2,1}}(-\tau,0;L^{q_2}(\partial\Omega))}^{2(1+\kappa_{2,1})}
	\]
	with $\kappa_{2,1} \coloneqq \kappa_2$, $r_{2,1} \coloneqq 2(1+\kappa_{2,1})\frac{r_2}{r_2-1}$
	and $q_{2,1} \coloneqq 2(1+\kappa_{2,1}) \frac{q_2}{q_2-1}$
	and
	\[
		\int_{-\tau}^0 \int_{B_k(s)} |\beta|
			\le \|\beta\|_{L^{q-1}(\partial\Omega)} \|\setone_{B_k}\|_{L^{r_{2,2}}(-\tau,0;L^{q_{2,2}}(\partial\Omega))}^{2(1+\kappa_{2,2})}
	\]
	with $\kappa_{2,2} \coloneqq \frac{N(q-N) + 2(N-1)}{(q-1)N}$, $r_{2,2} \coloneqq 2(1+\kappa_{2,2})$ and $q_{2,2} \coloneqq 2(1+\kappa_{2,2}) \frac{q-1}{q-2}$.
	Thus~\eqref{eq:fullest2} yields
	\begin{align}
		\|\zeta u^{(k)}\|_{Q(\tau)}^2
			& \le c_\mu \|\zeta'\|_{L^\infty(-\tau,0)} \int_{-\tau}^0 \int_\Omega |u^{(k)}(s)|^2
				+ c k^2 \sum_{\ell=1}^2 \Bigl( \int_{-\tau}^0 |A_k(s)|^{\frac{r_{1,\ell}}{q_{1,\ell}}} \Bigr)^{\frac{2(1+\kappa_{1,\ell})}{r_{1,\ell}}} \nonumber \\
				& \qquad + c k^2 \sum_{\ell=1}^2 \Bigl( \int_{-\tau}^0 |B_k(s)|^{\frac{r_{2,\ell}}{q_{2,\ell}}} \Bigr)^{\frac{2(1+\kappa_{2,\ell})}{r_{2,\ell}}} \label{eq:fullest3}
	\end{align}
	Moreover, \eqref{eq:qrrelk} implies that the parameters $r_{i,\ell}$ and $q_{i,\ell}$
	satisfy~\eqref{eq:qrcond} for $i=1,2$ and $\ell=1,2$ as elementary calculations show.

	If we pick $\zeta(t) \coloneqq \frac{t + \tau}{\sigma \tau}$
	for $t \in [-\tau,-(1-\sigma)\tau]$ and $\zeta(t) \coloneqq 1$ for $t \in [-(1-\sigma)\tau, 0]$ with some
	given $\sigma \in (0,\frac{1}{2})$, we have
	\[
		\|u^{(k)}\|_{Q((1-\sigma)\tau)}^2 \le \|\zeta u^{(k)}\|_{Q(\tau)}^2
	\]
	and $\|\zeta'\|_{L^\infty(-\tau,0)} \le \frac{1}{\sigma \tau}$
	if $T \le T_0$, where $c$ depends only on $\mu$, $\mathcal{D}_0$ and $\beta$.
	Thus~\eqref{eq:fullest3} implies~\eqref{eq:ukest}. Hence by Theorem~\ref{thm:LSUthm1} applied
	to $u$ and $-u$, the latter being a classical solution of $(P_{-u_0,-f,-g})$, we obtain~\eqref{eq:Linfbound}.

	If in addition $u(-T) = 0$, then we can set $\tau \coloneqq T$ and choose
	$\zeta(t) \coloneqq 1$ for all $t \in [-T,0]$.
	Now using Corollary~\ref{cor:LSUthm2} instead of Theorem~\ref{thm:LSUthm1}, we obtain~\eqref{eq:Linfboundglob}
	from~\eqref{eq:fullest3} like above.
\end{proof}

We finally make the step from classical $L^2$-solutions to weak solutions and drop the assumption
that $T$ be small enough, thus proving Proposition~\ref{prop:Linfbound}.

\begin{proof}[Proof of Proposition~\ref{prop:Linfbound}]
	Let $u$ be the weak solution of $(P_{u_0,f,g})$.
	Pick a sequence $(u_{0,n})$ in $D(A_{2,h}^2)$ that satisfies $u_{0,n} \to u_0$
	in $L^2(\Omega)$, which exists since by Proposition~\ref{prop:semigroup} the operator $A_{2,h}$
	is a generator of a strongly continuous semigroup and hence densely defined.
	Pick sequences $(f_n)$ and $(g_n)$ in $\mathrm{C}^2( [0,T]; L^\infty(\Omega) )$ and $\mathrm{C}^2( [0,T]; L^\infty(\partial\Omega) )$, respectively,
	that satisfy $f_n \to f$ in $L^{r_1}(0,T; L^{q_1}(\Omega))$ and
	$g_n \to g$ in $L^{r_2}(0,T; L^{q_2}(\partial\Omega))$, while $f_n(0) = 0$ and $g_n(0) = 0$
	for all $n \in \mathds{N}$.
	Then problem $(P_{u_{0,n},f_n,g_n})$ has a unique classical $L^2$-solution $u_n$ by Proposition~\ref{prop:respossol},
	and as in the proof of Theorem~\ref{thm:mainex} we see that $u_n \to u$ in $\mathrm{C}([0,T]; L^2(\Omega)) \cap L^2(0,T; H^1(\Omega))$.

	Pick $T_0 > 0$ as in Lemma~\ref{lem:Linfbound}. Shrinking $T_0$, if necessary, we can assume that $T_0 \le T$.
	Let $I \subset [\frac{T_0}{2}, T_0]$ be an interval of length at most $\frac{T_0}{2}$.
	Applying~\eqref{eq:Linfbound} for the classical $L^2$-solutions $u_n$ and $u_n - u_m$ on $I$,
	which is allowed by Lemma~\ref{lem:Linfbound}, we obtain that
	\begin{equation}\label{eq:unestinf}
		\|u_n\|_{L^\infty(I; L^\infty(\Omega))}^2
			\le c \int_0^T \int_\Omega |u_n(s)|^2
				+ c \|f_n\|_{L^{r_1}(0,T;L^{q_1}(\Omega))}^2 + c \|g_n\|_{L^{r_2}(0,T;L^{q_2}(\Omega))}^2
	\end{equation}
	and that $(u_n|_I)$ is a Cauchy sequence in $L^\infty(I; L^\infty(\Omega))$.
	Hence $u_n \to u$ in $L^\infty(I; L^\infty(\Omega))$ and passing to the limit in~\eqref{eq:unestinf}
	we have
	\begin{equation}\label{eq:uestinf}
		\|u\|_{L^\infty(I; L^\infty(\Omega))}^2
			\le c \int_0^T \int_\Omega |u(s)|^2
				+ c \|f\|_{L^{r_1}(0,T;L^{q_1}(\Omega))}^2 + c \|g\|_{L^{r_2}(0,T;L^{q_2}(\Omega))}^2
	\end{equation}
	Covering $[\frac{T}{2}, T]$ by finitely many intervals of length at most $\frac{T_0}{2}$
	and using~\eqref{eq:uestinf} for each of these intervals we obtain~\eqref{eq:Linfbound}.

	If in addition $u_0 = 0$, then we can pick $u_{0,n} \coloneqq 0$
	and the same strategy as above yields that
	\[
		\|u\|_{L^\infty(0,T_0; L^\infty(\Omega))}^2
			\le c \int_0^T \int_\Omega |u(s)|^2
				+ c \|f\|_{L^{r_1}(0,T;L^{q_1}(\Omega))}^2 + c \|g\|_{L^{r_2}(0,T;L^{q_2}(\Omega))}^2.
	\]
	Using in addition~\eqref{eq:Linfbound} to estimate $\|u\|_{L^\infty(I; L^\infty(\Omega))}$ for finitely many
	intervals $I$ of length $\frac{T_0}{2}$ that cover $[T_0,T]$, we have proved also~\eqref{eq:Linfboundglob}.
\end{proof}

\section*{Acknowledgments}

The author is grateful to Wolfgang Arendt for many fruitful discussions.

\bibliographystyle{amsplain}
\bibliography{inhomneumann}

\end{document}